\documentclass{article}
\usepackage{amsmath, amssymb, amsthm}
\usepackage{graphicx}
\usepackage{epstopdf}
\usepackage{caption}
\usepackage{subcaption}
\usepackage{color}
\usepackage{fullpage}
\usepackage{tabularx}
\usepackage{enumerate}
\usepackage[vcentermath]{youngtab}
\usepackage[utf8]{inputenc}
\usepackage{young}
\allowdisplaybreaks

\newcommand{\Bea}{\begin{eqnarray*}}
\newcommand{\Eea}{\end{eqnarray*}}
\newcommand{\bea}{\begin{eqnarray}}
\newcommand{\eea}{\end{eqnarray}}
\newcommand{\f}{\frac}
\newcommand{\bs}{\backslash}

\newcommand{\C}{\mathbb{C}}
\newcommand{\E}{\mathbb{E}}
\newcommand{\N}{\mathbb{N}}
\newcommand{\R}{\mathbb{R}}

\newcommand{\CM}{\mathcal{M}}
\newcommand{\CN}{\mathcal{N}}

\newcommand{\comment}[1]{}

\def\e{\epsilon}
\def\g{\gamma}

\def\s{\sigma}
\def\l{\lambda}

\def\1{\textbf{1}}

\def\bs{\backslash}
\newtheorem{thm}{Theorem}
\newtheorem{prop}{Proposition}
\newtheorem{lem}{Lemma}

\begin{document}
\bibliographystyle{abbrv}

\title{Fluctuations of Linear Eigenvalue Statistics of Random Band Matrices}

\author{
\hspace{.25in} \parbox[t]
{0.3\textwidth}
{{\sc Indrajit Jana}
\thanks{ Department of Mathematics,
        University of California, Davis,
        CA 95616-8633. ijana@math.ucdavis.edu} }
\parbox[t]{0.3\textwidth}
{{\sc Koushik Saha}
\thanks{Department of Mathematics,
        Indian Institute of Technology,
         Bombay, Mumbai 400076, India.  ksaha@math.iitb.ac.in, Research
supported by INSPIRE Fellowship, Dept. of Science and Technology, Govt. of India and VIMSS fellowship.}}
\parbox[t]{0.4\textwidth}{{\sc Alexander Soshnikov}
\thanks{ Department of Mathematics,
        University of California, Davis,
        CA 95616-8633, soshniko@math.ucdavis.edu.}} }

\date{\today}
\maketitle
\begin{abstract}
In this paper, we study the fluctuation of linear eigenvalue statistics of Random Band Matrices defined by $M_{n}=\f{1}{\sqrt{b_{n}}}W_{n}$, where
$W_{n}$ is a $n\times n$ band Hermitian random matrix of bandwidth $b_{n}$, i.e., the diagonal elements and only first $b_{n}$ off diagonal elements
are nonzero. We study the linear eigenvalue statistics $\CN(\phi)=\sum_{i=1}^{n}\phi(\l_{i})$ of such matrices, where $\l_{i}$ are the eigenvalues of
$M_{n}$ and $\phi$ is a sufficiently smooth function. We prove that $\sqrt{\f{b_{n}}{n}}\left[\CN(\phi)-\E \CN(\phi)\right]\stackrel{d}{\to} N(0,V(\phi))$ for $b_{n}>>\sqrt{n}$, where $V(\phi)$ is given in the Theorem 1.
\end{abstract}

\noindent\textbf{Keywords:} Band random matrix, Central limit theorem, Gaussian distribution, linear  eigenvalue statistics,  semi circular law, Wigner matrix.



\section{Introduction:}

Random Matrix Theory was developed from several different sources in the early 20th century. It is used as an important mathematical tool in various fields namely, Mathematics, Physics, wireless communication engineering etc. One of the earliest example of a random matrix appeared in
the study of sample covariance estimation was done by John Wishart ~\cite{wishart1928generalised}. In the early 1950s, Wigner introduced random matrix ensemble to study the energy spectra of heavy atoms undergoing slow nuclear reactions.


Random matrices are also used to model wireless channels. A random matrix model of CDMA networks can be found in ~\cite{tulino2004random,verdu1999spectral}.

A special kind of random matrix ensemble is a random band matrix. In 1955, Wigner studied the matrices $H$ of the form $H=K+V$, where $K$ is an $n\times n$ diagonal matrix consisting of $\cdots -2, -1, 0, 1, 2, \cdots$, and $V$ is an $n\times n$ symmetric sign matrix having non vanishing elements only up to a distance $b_{n}$
from the main diagonal. Such a matrix $H$ was called as bordered matrix  ~\cite{wigner1955,wigner1957}.


Another treatment of random band matrix was done by G. Casati et al. ~\cite{casati1990scaling,casati1991scaling} in the context of
Quantum Chaos. They studied $n\times n$ symmetric random band matrices of bandwidth $b_{n}$, where $b_{n}$ grows with $n$. In 1992, Molchanov et al. proved the Semicircle Law for random band matrices \cite{molchanov1992limiting}. In 1991, Fyodorov and Mirlin proved that $\f{b_{n}^{2}}{n}$ is a crucial parameter for random band matrices \cite{fyodorov1991scaling,mirlin1996transition}. Numerical simulations
show that the local eigenvalue statistics changes from Poisson to GOE or GUE as $b_{n}$ changes from $b_{n}<<\sqrt{n}$ to $b_{n}>>\sqrt{n}$.
Recently,  Li and Soshnikov \cite{li2013central} proved the Central Limit Theorem (CLT) for linear statistics of eigenvalues of band random matrices when
the bandwidth $b_{n}$ satisfies $\sqrt{n}<<b_{n}<<n$. In this article we write $\alpha_{n}<<\beta_{n}$ if $\f{\alpha_{n}}{\beta_{n}}\to 0$ as $n\to\infty$, and we write $\alpha_{n}=O(\beta_{n})$ if $\left|\f{\alpha_{n}}{\beta_{n}}\right|\leq C$ for all $n$ for some constant $C>0$.

In this article, we  deal with the CLT for the eigenvalue statistics of band random matrices. We take the approach of M. Shcherbina in \cite{shcherbina2011central} to establish the CLT for band matrices with bandwidth $b_n$ where $b_n\to \infty$ as $n\to \infty$. We give an alternative proof of Li and Soshnikov \cite{li2013central} result on CLT of band matrices when $\sqrt{n}<<b_{n}<<n$. We have given some simulation results in Section \ref{section: Simulation}, which ensure that the CLT for band matrices will also hold if $\sqrt n/b_n  \nrightarrow 0$ and $b_n\to \infty$.



Now we define our model. Let us define the (circular) distance function $d_{n}:\N\times \N\to\N$ as
\Bea
d_{n}(j,k):=\min\{|j-k|,n-|j-k|\},
\Eea
and the index sets $I_{n}, I_{n}^{+}\subset\N\times\N$, $I_{1}\subset\N$ as
\bea\label{defn: definition of I_1, I_n}
I_{n}:=\{(j,k):d_{n}(j,k)\leq b_{n}\},\;\;\;
I_{n}^{+}=\{(j,k):(j,k)\in I_{n},j\leq k\},\;\;\;
I_1=\{1< j \leq n: (1,j)\in I_n\}
\eea
where $\{b_{n}\}$ is a sequence of positive integers such that $b_{n}\to\infty$ as $n\to\infty$.

Define a real symmetric random band matrix $M=(m_{jk})_{n\times n}$ of bandwidth $b_{n}$ as
\bea\label{defn: random band matrix}
m_{jk}=m_{kj}=\left\{\begin{array}{ll}
b_{n}^{-1/2}w_{jk} & \text{if $d_{n}(j,k)\leq b_{n}$}\\
0 & \text{otherwise},
\end{array}\right.
\eea
where $\{w_{ii}\}$ and $\{w_{jk}\}_{j\neq k,(j,k)\in I_{n}^{+}}$ are two sets of iid real random variables with

\bea\label{eqn: moment condition}
\E[w_{jk}]=0,\;\;\;
\E[w_{jk}^{2}]=
\left\{\begin{array}{cl}
1 & \text{if $j\neq k$} \\
\s^{2} & \text{if $j=k$}.
\end{array}\right.
\eea
Here $\{w_{jk}\}$ may depend on $n$, but we suppress it when there is no confusion. Let $\l_{1}\leq\l_{2}\leq \cdots\leq \l_{n}$ be the eigenvalues of the random band matrix $M$. Define the linear eigenvalue statistic of the eigenvalues of $M$ as
\bea\label{eqn: linear eigenvalue statistic}
\CN_{n}(\phi)=\sum_{i=1}^{n}\phi(\l_{i}),
\eea
and  the normalized eigenvalue statistic of the matrix $M$ as
\bea\label{eqn: normalized linear eigenvalue statistic}
\CM_{n}(\phi)=\sqrt{\f{b_{n}}{n}}\CN_{n}(\phi),
\eea
where $\phi$ is a test function. 

\section{Main Results:}
\begin{thm}\label{thm: Main theorem 1}
 Let $M$ be a real symmetric random band matrix as defined in \eqref{defn: random band matrix}, and $b_{n}$ be a sequence of integers satisfying $\sqrt{n}<<b_{n}<<n$. Assume the following:
 \begin{enumerate}[(i)]
 \item $w_{jk}$ satisfies the Poincar\'e inequality with constant $m>0$ not depending on $j,k,n$ i.e., for any continuously differentiable function $f$,
 
 \Bea
 \text{Var}(f(w_{jk}))\leq \f{1}{m}\E\left[\left|f'(w_{jk})\right|^{2}\right].
 \Eea
  \item $\E[w_{jk}^{4}]=\mu_{4}$ for all $j\neq k$ and $d_{n}(j,k)\leq b_{n}$.
  \item $\phi:\R\to\R$ be a test function in the Sobolev space $H^{s}$ i.e., $\|\phi\|_{s}<\infty$, where
  \Bea
  \|\phi\|_{s}^{2}&=&\int_{\R}(1+2|t|)^{2s}|\hat \phi(t)|^{2}\;dt,\\
  \hat\phi(t)&=&\f{1}{\sqrt{2\pi}}\int_{\R}e^{-it\l}\phi(\l)\;d\l,
  \Eea
 and $s>5/2$.
 \end{enumerate}
Then the centred normalized eigenvalue statistic $\CM^{\circ}(\phi)=\CM_{n}(\phi)-\E[\CM_{n}(\phi)]$ converges in distribution to the Gaussian random variable with mean zero and variance given by
\Bea
V(\phi)&=&\f{\kappa_{4}}{16\pi^{2}}\left(\int_{-2\sqrt{2}}^{2\sqrt{2}}\f{4-\mu^{2}}{\sqrt{8-\mu^{2}}}\phi(\mu)\;d\mu\right)^{2}+\f{\s^{2}}{16\pi^{2}}\left(\int_{-2\sqrt{2}}^{2\sqrt{2}}\f{\mu\phi(\mu)}{\sqrt{8-\mu^{2}}}\;d\mu\right)^{2}\\
&&+\int_{-2\sqrt{2}}^{2\sqrt{2}}\int_{-2\sqrt{2}}^{2\sqrt{2}}\sqrt{(8-x^{2})(8-y^{2})} F(x,y) \int_{-2\sqrt{2}}^{2\sqrt{2}}\int_{-2\sqrt{2}}^{2\sqrt{2}}\f{\mu_{1}\phi(\mu_{1})}{(x-\mu_{1})\sqrt{8-\mu_{1}^{2}}}\f{\mu_{2}\phi(\mu_{2})}{(x-\mu_{2})^2\sqrt{8-\mu_{2}^{2}}}d\mu_{1}d\mu_{2}\ dx dy,
\Eea
where for $x\neq y$
\Bea
F(x,y)=2\int_{-\infty}^{\infty}\f{(s^{3}\sin s-s\sin^{3}s)}{2(s^{2}-\sin^{2}s)^{2}-(s^{3}\sin s+s\sin^{3}s)xy+s^{2}\sin^{2}s(x^{2}+y^{2})}ds,
\Eea
and $\kappa_{4}$ is the fourth cumulant of the off-diagonal entries, i.e., $\kappa_{4}=\mu_{4}-3$.
\end{thm}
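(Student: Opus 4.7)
My strategy is to follow the approach of Shcherbina, treating the characteristic function of the centered linear statistic as the central object. Set $Z_n(x) = \mathbb{E}\bigl[\exp\{ix\CM_n^\circ(\phi)\}\bigr]$; the goal is to show $Z_n(x) \to \exp\{-x^2 V(\phi)/2\}$ pointwise in $x$. By the Fourier inversion formula we have $\CN_n(\phi) = (2\pi)^{-1/2}\!\int \hat\phi(t)\,\mathrm{Tr}\,e^{itM}\,dt$, so it suffices to study the process $Y_n(t) := \sqrt{b_n/n}\,(\mathrm{Tr}\,e^{itM} - \mathbb{E}\,\mathrm{Tr}\,e^{itM})$ and compute its limiting pair covariance. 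The Sobolev condition $s > 5/2$ gives enough decay of $\hat\phi$ to interchange the $t$-integral with expectations and to control tail contributions uniformly in $n$.

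First I would establish, via the Poincar\'e inequality and a standard resolvent fluctuation bound, the self-averaging estimate $n^{-1}\mathrm{Tr}\,G_n(z) \to g_{sc}(z)$, where $g_{sc}$ is the Stieltjes transform of the semicircle supported on $[-2\sqrt{2},2\sqrt{2}]$ (the effective variance is $2$, from unit variance along the band row combined with the $\sigma^2$ contribution at the diagonal). Differentiating $Z_n$ in $x$ gives $Z_n'(x) = i\,\E[\CM_n^\circ(\phi)\,e^{ix\CM_n^\circ(\phi)}]$; I then expand this expectation entry by entry in $w_{jk}$ using a cumulant expansion up to order four, and substitute the self-averaging estimate above. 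The outcome should be an approximate ODE $Z_n'(x) = -x\,V(\phi)\,Z_n(x) + o(1)$ whose solution is the Gaussian characteristic function, giving the claimed weak convergence.

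Identifying the three pieces of $V(\phi)$ is the central computation. The $\kappa_4$-term collects the fourth-cumulant remainder of the off-diagonal entries in the cumulant expansion, and it assembles into a single integral of $\phi$ against the derivative of $g_{sc}$. The $\s^2$-term is the analogous diagonal-entry contribution. The delicate third term carrying the kernel $F(x,y)$ is the \emph{bulk} Gaussian contribution, for which the pair covariance $\E[G_{jk}(z_1)G_{kj}(z_2)]$ does \emph{not} factor as in the Wigner case; instead, exploiting the cyclic band structure, I would diagonalize this pair kernel on $\Z/n\Z$ by the discrete Fourier transform, so that each mode $\theta=2\pi\ell/n$ contributes a rational function of $g_{sc}(z_1), g_{sc}(z_2)$ and of the band symbol $\sum_{|k|\leq b_n}e^{ik\theta}$. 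Passing to the continuum in the rescaled variable $s = b_n\theta$ converts the Fourier sum into a Riemann integral whose symbol $\sin s/s$ is exactly what produces the trigonometric formula for $F(x,y)$ stated in the theorem; the Stieltjes inversion formula then unfolds $g_{sc}(z)$ against the spectral variables $\mu_1,\mu_2,x,y$ to give the displayed quadruple integral.

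The main obstacle will be controlling the off-diagonal resolvent covariance $\E[G_{jk}(z_1)G_{kj}(z_2)]$ uniformly in the spectral parameters $z_1,z_2 = u\pm i\eta$ up to the optimal scale $\eta$, and uniformly in $(j,k)\in I_n^+$, so that after integration against $\hat\phi(t_1)\hat\phi(t_2)$ the error terms genuinely vanish in the regime $\sqrt n\ll b_n\ll n$. The hypothesis $b_n\gg\sqrt{n}$ is precisely what forces the Fourier sum over $\Z/n\Z$ to be dominated by its Riemann-integral limit at scale $1/b_n$ rather than by the remaining $O(n/b_n^2)$ discretization fluctuations, and verifying this quantitatively via the Poincar\'e inequality is the core technical step. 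Once the ODE for $Z_n(x)$ is established with controlled remainders, recovering the precise form of $V(\phi)$ is mechanical but lengthy algebra.
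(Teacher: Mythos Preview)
Your overall architecture---derive an approximate ODE $Z_n'(x)=-xV(\phi)Z_n(x)+o(1)$ for the characteristic function and solve it---is indeed Shcherbina's framework and is what the paper follows. But your implementation diverges from the paper at two essential points.

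First, the paper does \emph{not} work with $\mathrm{Tr}\,e^{itM}$ or with an entrywise cumulant expansion of order four. It stays with the resolvent throughout: one writes $\CN_n^\circ(\phi_\eta)=\pi^{-1}\!\int\phi(\mu)\,\Im\gamma_n^\circ(z_\mu)\,d\mu$ after convolving $\phi$ with a Poisson kernel $P_\eta$, and the Sobolev exponent $s>5/2$ is used precisely to show that the Poisson regularization is harmless via the variance bound $\mathrm{Var}(\gamma_n)\le Cnb_n^{-1}(y^{-2}+y^{-4})(\max\{y,|x|-2/y\})^{-2}$. The derivative $Z_n'$ is then unpacked using the Schur complement identity $G_{11}=-A(z)^{-1}$ with $A(z)=z-b_n^{-1/2}w_{11}+\langle G^{(1)}m^{(1)},m^{(1)}\rangle$, a Taylor expansion of $A^{-1}$ about $(\E A)^{-1}$, and a martingale-type splitting $e(x)\leadsto e_1(x)$ based on the minor $M^{(1)}$. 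The $\kappa_4$ and $\sigma^2$ terms fall out of the explicit computation of $b_n\E_1[A^\circ(z_1)A^\circ(z_2)]$, not from a general cumulant expansion of the entries.

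Second, and more substantively, your plan to obtain $F(x,y)$ by Fourier-diagonalizing the pair covariance $\E[G_{jk}(z_1)G_{kj}(z_2)]$ on $\Z/n\Z$ is \emph{not} what the paper does. The paper instead computes $\lim_n \E[T_n]$ with $T_n=\tfrac{2}{b_n}\sum_{i,j\in I_1}G^{(1)}_{ij}(z)G^{(1)}_{ij}(z_\mu)$ by a moment-method argument: expand in monomials, reduce to Dyck paths, and show that the limiting bilinear form $\langle f,g\rangle$ is diagonalized by rescaled Chebyshev polynomials $U_k$ with eigenvalues $2\gamma_{k+1}$, where $\gamma_{k+1}=\Pb(|T_0+\cdots+T_k|\le 1/2)$ for i.i.d.\ uniforms on $[-1/2,1/2]$. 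The $\sin s/s$ in $F(x,y)$ arises as the characteristic function of those uniforms, not as the Fourier symbol of the band indicator on $\Z/n\Z$. Your Fourier route is heuristically related (both trace back to the band profile), and something like it appears in later work, but as a proof it would require you to control $\E[G_{jk}(z_1)G_{kj}(z_2)]$ as a function of $j-k$ with enough uniformity to pass to a Riemann integral---a step you flag but do not carry out, and which the paper entirely sidesteps by working combinatorially. Your reading of $b_n\gg\sqrt n$ as a Riemann-sum condition is also not how the paper uses it: there the condition simply kills error terms of size $O(\sqrt{n}/b_n)$ and $O(\sqrt{n/b_n^2})$ coming from the Taylor remainder of $A^{-1}$ and from replacing $e_1(x)$ by $e(x)$.
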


\section{Proof of Theorem \ref{thm: Main theorem 1}:}
We will follow the approach taken by M. Shcherbina in \cite{shcherbina2011central} for full (Wigner) matrix. This approach is based on two main ideas. The first ingredient is stated in the following proposition which gives a bound on the variance of linear eigenvalue statistics with a sufficiently smooth test function in term of the variance of the trace of the resolvent of a random matrix. For a proof of this result see \cite{shcherbina2011central,soshnikov2013fluctuations}. In what follows, we denote $X^{\circ}=X-\E[X]$ for any random variable $X$.
\begin{prop}\label{prop: variance of N_n}
Let $M$ be an $n\times n$ real symmetric random matrix and $\mathcal N_n(\phi)$ be a linear eigenvalue statistic of its eigenvalue as in  \eqref{eqn: linear eigenvalue statistic}. Then for any $s>0$ 
we have
\Bea
\text{Var}[\CN_{n}(\phi)]\leq C_{s}\|\phi\|_{s}^{2}\int_{0}^{\infty}dy\;e^{-y}y^{2s-1}\int_{-\infty}^{\infty}\text{Var}[\text{Tr}(G(x+iy))]\;dx,
\Eea
where $C_{s}$ is a constant depends only on $s$, and $G(z)=(M-zI)^{-1}$, is the resolvent of the matrix $M$.
\end{prop}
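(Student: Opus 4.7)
The plan is to use the Fourier representation of $\phi$, Cauchy--Schwarz weighted by the Sobolev norm, and the Poisson kernel link between $e^{itM}$ and the resolvent $G(z)$, combined with a Gamma function identity that generates the weight $e^{-y}y^{2s-1}$.

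First I would invert the Fourier transform, writing
\Bea
\phi(\lambda)=\f{1}{\sqrt{2\pi}}\int_{\R}\hat\phi(t)e^{it\lambda}\;dt,\qquad \CN_n^{\circ}(\phi)=\f{1}{\sqrt{2\pi}}\int_{\R}\hat\phi(t)\left[\text{Tr}(e^{itM})\right]^{\circ}dt,
\Eea
which is justified for $s>1/2$ since then $\hat\phi\in L^{1}(\R)$ by Cauchy--Schwarz. Splitting $\hat\phi(t)=(1+2|t|)^{s}\hat\phi(t)\cdot(1+2|t|)^{-s}$ and applying Cauchy--Schwarz against $t$, then taking expectations, gives
\Bea
\text{Var}[\CN_{n}(\phi)]\leq \f{\|\phi\|_{s}^{2}}{2\pi}\int_{\R}\f{\E\left|\left[\text{Tr}(e^{itM})\right]^{\circ}\right|^{2}}{(1+2|t|)^{2s}}\;dt.
\Eea

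Next I would use the Gamma function identity
\Bea
(1+2|t|)^{-2s}=\f{1}{\Gamma(2s)}\int_{0}^{\infty}y^{2s-1}e^{-(1+2|t|)y}\;dy
\Eea
and Fubini (all integrands are nonnegative) to insert the weight $e^{-y}y^{2s-1}$ and produce an extra factor $e^{-2|t|y}$ inside the $t$-integral. The remaining task is to convert $\int_{\R}e^{-2|t|y}\E|[\text{Tr}(e^{itM})]^{\circ}|^{2}\;dt$ into an $x$-integral of the variance of $\text{Tr}(G(x+iy))$. For this I would use the Poisson representation
\Bea
\text{Im}\,\text{Tr}(G(x+iy))=\sum_{i=1}^{n}\f{y}{(\l_{i}-x)^{2}+y^{2}},
\Eea
whose Fourier transform in $x$ is $\pi\,\text{Tr}(e^{-itM})e^{-|t|y}$, and apply Plancherel's theorem to obtain
\Bea
\int_{\R}\E\left|\left[\text{Tr}(e^{itM})\right]^{\circ}\right|^{2}e^{-2|t|y}\;dt=\f{2}{\pi}\int_{\R}\E\left|\left[\text{Im}\,\text{Tr}(G(x+iy))\right]^{\circ}\right|^{2}dx\leq \f{2}{\pi}\int_{\R}\text{Var}[\text{Tr}(G(x+iy))]\;dx,
\Eea
since the variance of the imaginary part is dominated by the total variance of the complex trace. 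Combining the three displays yields the stated bound with explicit constant $C_{s}=1/(\pi^{2}\Gamma(2s))$.

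The main obstacle is the Plancherel step: one must verify that $\text{Im}\,\text{Tr}(G(\cdot+iy))$ lies in $L^{2}(\R)$ almost surely (which follows from the explicit Lorentzian decay) and that the Fourier computation survives centering; centering is harmless because subtracting the (deterministic) expectation corresponds to subtracting a function whose Fourier transform is the same expression minus its expectation, and Plancherel is linear. Everything else is a careful bookkeeping of Fubini exchanges, all of which are legitimate because the integrands are nonnegative.
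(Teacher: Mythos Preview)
Your argument is correct and is exactly the standard proof due to Shcherbina, which is what the paper cites (it does not supply its own proof, only the reference \cite{shcherbina2011central,soshnikov2013fluctuations}). The one cosmetic discrepancy is that the proposition is stated for all $s>0$ while your Fourier inversion step uses $\hat\phi\in L^{1}$, i.e.\ $s>1/2$; this is harmless since for $s\leq 1/2$ the bound is either recovered by approximation or is only invoked in the paper with $s>5/2$ anyway.
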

The second ingredient of this approach is to use the martingale difference technique to provide a good bound on $\text{Var}(\gamma_n)$ where $\gamma_n$ is the trace of the resolvent of a matrix. The following proposition gives that bound.
\begin{prop}\label{prop: bound on Var(gamma)}
Consider symmetric band matrix $M$ defined in  \eqref{defn: random band matrix} and assume \eqref{eqn: moment condition} is satisfied. Then for some $C>0$ not depending on $z,n$ we have
\bea\label{eqn: bound on the variance of gamma_n}
\text{Var}\{\g_{n}\}\leq\f{Cn}{b_{n}}\left(y^{-2}+y^{-4}\right)\left(\max\left\{y,|x|-\f{2}{y}\right\}\right)^{-2}
\eea
where $ \g_{n}=\text{Tr}(M-zI)^{-1}=\text{Tr}(G)$ and $z=x+iy$, $y>0$.
\end{prop}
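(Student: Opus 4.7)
The plan is to apply the classical martingale-difference technique, adapted so that the band structure produces the $n/b_n$ factor in the variance bound. Let $\CF_k = \sigma\bigl(m_{ij}:\min(i,j)\leq k\bigr)$ be the filtration obtained by revealing the matrix row/column by row/column, and set $\E_k = \E[\,\cdot\,|\CF_k]$. The decomposition $\g_n - \E\g_n = \sum_{k=1}^n D_k$ with $D_k = (\E_k - \E_{k-1})\g_n$ and the orthogonality of martingale differences reduce the problem to establishing the per-step bound $\E|D_k|^2 \leq (C/b_n)(y^{-2}+y^{-4})\max\{y,\,|x|-2/y\}^{-2}$. Let $M^{(k)}$ denote the $(n-1)\times(n-1)$ minor of $M$ obtained by deleting row and column $k$, and set $\g_n^{(k)} = \text{Tr}(M^{(k)}-zI_{n-1})^{-1}$; since $\g_n^{(k)}$ is independent of the entries revealed at step $k$, one has $(\E_k-\E_{k-1})\g_n^{(k)}=0$, hence $D_k = (\E_k-\E_{k-1})(\g_n-\g_n^{(k)})$. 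A standard Schur-complement (block matrix inversion) computation yields the identity
\[
\g_n-\g_n^{(k)} = G_{kk}\bigl(1 + \alpha_k'(z)\bigr), \qquad G_{kk} = \frac{1}{m_{kk}-z-\alpha_k(z)},
\]
with $\alpha_k = m_k^T G^{(k)} m_k$ and $\alpha_k' = m_k^T (G^{(k)})^2 m_k$, where $m_k$ is the $k$-th column of $M$ with its $(k,k)$ entry removed.

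Two ingredients then drive the estimate. Because $\text{Im}\,G^{(k)}$ is positive semidefinite, $\text{Im}\,\alpha_k\geq 0$, so $|m_{kk}-z-\alpha_k|\geq y$; together with $|\alpha_k|\leq \|m_k\|^2/y$ and $\E\|m_k\|^2 \leq 2$ (the column $m_k$ has $\sim 2b_n$ nonzero entries of variance $1/b_n$), a simple concentration argument gives the deterministic bound
\[
|G_{kk}| \leq \frac{1}{\max\{y,\,|x|-2/y\}}
\]
on an event of overwhelming probability. Next, the conditional variances of the quadratic forms $\alpha_k,\alpha_k'$ given $G^{(k)}$ are estimated via the standard formula for $\text{Var}(v^T Q v)$: with $v=m_k$ having $\sim 2b_n$ iid nonzero entries of variance $1/b_n$ and bounded fourth moment,
\[
\text{Var}(\alpha_k|G^{(k)}) \leq \frac{C}{b_n^2}\sum_{i,j\in\text{band}(k)}|G^{(k)}_{ij}|^2 \leq \frac{C}{b_n y^2}, \qquad \text{Var}(\alpha_k'|G^{(k)}) \leq \frac{C}{b_n y^4},
\]
using $\sum_{i\in\text{band}(k)}\sum_j|G^{(k)}_{ij}|^2 \leq 2b_n\|G^{(k)}\|^2 \leq 2b_n/y^2$ and the analogous bound for $(G^{(k)})^2$.

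To finish, Taylor-expand $X_k := G_{kk}(1+\alpha_k')$ in the fluctuations of $\alpha_k$ and $\alpha_k'$ about their conditional means; the resulting conditional variance splits schematically into $|G_{kk}|^2\,\text{Var}(\alpha_k')$ plus $|G_{kk}|^4(1+|\alpha_k'|)^2\,\text{Var}(\alpha_k)$, and using $\E|D_k|^2 \leq \E[\text{Var}(X_k|\CF_{k-1})]$ together with $|G_{kk}|\leq 1/y$ (to trade the extra $|G_{kk}|^2$ in the second term for an additional $y^{-2}$) produces the desired per-index bound; summing over $k\in\{1,\dots,n\}$ then yields the stated estimate. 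I expect the main obstacle to be the careful bookkeeping required to recover the precise factor $(y^{-2}+y^{-4})\max\{y,|x|-2/y\}^{-2}$: in particular, distinguishing uniformly the bulk regime ($\max=y$) from the tail regime ($\max=|x|-2/y$) when combining the two variance contributions, and handling rigorously the low-probability event where $\|m_k\|^2$ or $|m_{kk}|$ is anomalously large so that the deterministic bound on $|G_{kk}|$ could fail.
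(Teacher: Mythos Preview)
Your overall strategy---martingale differences, the Schur-complement identity $\g_n-\g_n^{(k)}=-\frac{1+B}{A}$ with $A=z-m_{kk}+\alpha_k$ and $B=\alpha_k'$, and bounding the conditional variances of the quadratic forms by $C/(b_n y^2)$ and $C/(b_n y^4)$---is exactly the paper's approach. Where you diverge, and where the gap lies, is in how the factor $\bigl(\max\{y,|x|-2/y\}\bigr)^{-2}$ is extracted.

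You try to obtain it by bounding $|G_{kk}|$ itself, via a concentration estimate on $\|m_k\|^2$, and you correctly flag this as the main obstacle. Under the hypotheses of the proposition (only the moment condition \eqref{eqn: moment condition}, no Poincar\'e inequality) there is no exponential concentration available for $\|m_k\|^2$ or $m_{kk}$, so ``overwhelming probability'' is not at your disposal; and the bound $|G_{kk}|\leq 1/\max\{y,|x|-2/y\}$ is simply false pointwise. The paper sidesteps this entirely: it never bounds $|G_{kk}|=1/|A|$ beyond the trivial $1/y$. Instead, after centering at the \emph{conditional} mean $\E_1[A]=z+\tfrac{1}{b_n}\sum_{j\in I_1}G^{(1)}_{jj}$ (integrating out only the $k$-th row), one has the \emph{deterministic} lower bound
\[
|\E_1[A]|\;\ge\;\max\Bigl\{y,\;|x|-\tfrac{2}{y}\Bigr\},
\]
because $\text{Im}\,G^{(1)}_{jj}>0$ gives $\text{Im}\,\E_1[A]\ge y$, while $|I_1|\le 2b_n$ and $|G^{(1)}_{jj}|\le 1/y$ give $|\text{Re}\,\E_1[A]|\ge |x|-2/y$. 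One then writes
\[
\frac{1+B}{A}-\frac{\E_1[1+B]}{\E_1[A]}=\frac{B_1^{\circ}}{\E_1[A]}-\frac{A_1^{\circ}}{\E_1[A]}\cdot\frac{1+B}{A},
\]
and bounds the pieces using $|B/A|\le 1/y$, $1/|A|\le 1/y$, and $1/|\E_1[A]|\le 1/\max\{y,|x|-2/y\}$, all deterministic. Your Taylor expansion is morally the same manipulation, but the coefficients that appear are $1/\E_1[A]$, not $G_{kk}$; once you make that substitution, the ``low-probability bad event'' issue evaporates and no concentration argument is needed at all.
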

We prove this result in the appendix section. Now we  outline the proof of Theorem \ref{thm: Main theorem 1}
\begin{proof}[Proof of Theorem \ref{thm: Main theorem 1}:]
By L\'evy's continuity theorem, it suffices to show that if
\bea\label{defn: characteristic function}
Z_{n}(x)=\E[e_{n}(x)],\;\;\;e_{n}(x)=e^{ix\CM_{n}^{\circ}(\phi)}
\eea
then  for each $x\in\R$
\Bea
\lim_{n\to\infty}Z_{n}(x)=\exp\left[-\f{x^{2}V(\phi)}{2}\right],
\Eea
where $V(\phi)$ as in Theorem \ref{thm: Main theorem 1}.
For any test function $\phi\in H^{s}$, define
\Bea
\phi_{\eta}=P_{\eta}*\phi,
\Eea
where $P_{\eta}$ is the Poisson kernel given by
\Bea
P_{\eta}(x)=\f{\eta}{\pi(x^{2}+\eta^{2})}.
\Eea
We know that $\phi_{\eta}$ approximates $\phi$ in the $H^{s}$ norm i.e.,
\bea\label{eqn: approximation of phi}
\lim_{\eta\to 0}\|\phi-\phi_{\eta}\|_{s}\to 0.
\eea
For the moment, we denote the characteristic function defined in \eqref{defn: characteristic function}, by $Z_{n}(\phi)$ (to make its dependence on $\phi$ clear). Then we have 
\Bea
\lim_{n\to\infty}Z_{n}(\phi)=\lim_{\eta\downarrow 0}\lim_{n\to\infty}\left(Z_{n}(\phi)-Z_{n}(\phi_{\eta})\right)+\lim_{\eta\downarrow 0}\lim_{n\to\infty} Z_{n}(\phi_{\eta}).
\Eea
Now using the Proposition \ref{prop: variance of N_n} and \eqref{eqn: approximation of phi}, we shall show that
\bea\label{eqn: limit of Z(phi)-Z(phieta)}
\lim_{\eta\downarrow 0}\lim_{n\to\infty}\left(Z_{n}(\phi)-Z_{n}(\phi_{\eta})\right)=0.
\eea
and then
\Bea
\lim_{n\to\infty}Z_{n}(\phi)=\lim_{\eta\downarrow 0}\lim_{n\to\infty}Z_{n}(\phi_{\eta}).
\Eea
Hence it suffices to find the limit of
\bea\label{defn: characteristic function corresponding to phieta}
Z_{\eta,n}:=Z_{n}(\phi_{\eta})=\E\left[e_{\eta,n}(x)\right]
\eea
with
\Bea
e_{\eta,n}(x)=\exp\left[ix\CM_{n}^{\circ}(\phi_{\eta})\right]
\Eea
as $n\to\infty$ and $\eta\downarrow 0$ uniformly in $n$. Proofs of \eqref{eqn: limit of Z(phi)-Z(phieta)} and \eqref{defn: characteristic function corresponding to phieta} are given in the next two subsections and that will complete the proof of this theorem.
\end{proof}
\subsection{Proof of equation \eqref{eqn: limit of Z(phi)-Z(phieta)}:}
First observe that
\bea\label{eqn: Z_n(phi)-Z_n(phi_eta)}
\left|Z_{n}(\phi)-Z_{n}(\phi_{\eta})\right|^{2}\leq 2|x|^{2}\text{Var}\left[\CM_{n}(\phi)-\CM_{n}(\phi_{\eta})\right]
\leq 2|x|^{2}\f{b_{n}}{n}\text{Var}\left[\CN_{n}(\phi)-\CN_{n}(\phi_{\eta})\right].
\eea
Now, in view of Proposition \ref{prop: variance of N_n}, to bound $\text{Var}\left[\CN_{n}(\phi)-\CN_{n}(\phi_{\eta})\right]$ we need to estimate
\Bea
\int_{-\infty}^{\infty}\text{Var}\left(\g_{n}(x+iy)\right)\;dx,
\Eea
where $\g_{n}(x+iy)=\text{Tr}(G(x+iy))$ and $G(z)=(M-zI)^{-1}$.
We estimate that for $y>0$
\Bea
\int_{-\infty}^{\infty}\left(\max\left\{y,|x|-\f{2}{y}\right\}\right)^{-2}\;dx&\leq &\int_{||x|-2/y|<y}\f{1}{y^{2}}\;dx+\int_{||x|-2/y|\geq y}(x-2/y)^{-2}\;dx\\
&\leq&\f{10}{y}+10y
\Eea
Using the above estimate and \eqref{eqn: bound on the variance of gamma_n}, we have
\bea
\int_{0}^{\infty}dy\;e^{-y}y^{2s-1}\int_{-\infty}^{\infty}\text{Var}(\g_{n})\;dx&\leq&\f{C'}{b_{n}}\int_{0}^{\infty}e^{-y}y^{2s-1}4n\left(\f{1}{y}+y\right)\left(\f{1}{y^{2}}+\f{1}{y^{4}}\right)\;dy\nonumber\\
&=&C\f{n}{b_{n}}\int_{0}^{\infty}e^{-y}\left(2y^{2s-3-1}+y^{2s-1-1}+y^{2s-5-1}\right)\;dy\nonumber\\
&=&C\f{n}{b_{n}}\left(\Gamma(2s-3)+\Gamma(2s-1)+\Gamma(2s-5)\right).\label{eqn: estimate of rhs of proposition 1}
\eea
If we take
\Bea
s=\f{5}{2}+\e,\;\;\;\;\e>0
\Eea
then $\Gamma(2s-3)=\Gamma(2+2\e)$, $\Gamma(2s-1)=\Gamma(4+2\e)$, and $\Gamma(2s-5)=\Gamma(2\e)$. By Proposition \ref{prop: variance of N_n}, and \eqref{eqn: estimate of rhs of proposition 1}, we have
\Bea
\text{Var}\left(\CN_{n}(\phi)-\CN_n(\phi_\eta)\right)\leq C(\e)\f{n}{b_{n}}\|\phi-\phi_{\eta}\|_{s}.
\Eea
Using the above estimate and \eqref{eqn: Z_n(phi)-Z_n(phi_eta)}, we have
\Bea
\left|Z_{n}(\phi)-Z_{n}(\phi_{\eta})\right|^{2}
&\leq&2|x|^{2}\f{b_{n}}{n}\cdot C(\e)\f{n}{b_{n}}\|\phi-\phi_{\eta}\|_{s}\\
&=&2C(\e)|x|^{2}\|\phi-\phi_{\eta}\|_{s}\\
&\rightarrow& 0\;\;\;\text{as $\eta\to 0$}.
\Eea
The last limit follows from the equation \eqref{eqn: approximation of phi}. This completes the proof of \eqref{eqn: limit of Z(phi)-Z(phieta)}.

\subsection{Finding the limit of the characteristic function \eqref{defn: characteristic function corresponding to phieta}:}
We will be using the Lemma \ref{main lemma without poincare condition} and Lemma \ref{main lemma with poincare condition} from appendix in the proof of \eqref{defn: characteristic function corresponding to phieta}. Let us denote the averaging with respect to  $\{w_{1i}; 1\leq i\leq n\}$ by $\E_1$.

\begin{proof}[Proof of \eqref{defn: characteristic function corresponding to phieta}:]
Using the dominated convergence theorem we have
\Bea
\f{d}{dx}Z_{n}(\phi_{\eta})&=&\f{d}{dx}\E\left[e_{\eta,n}(x)\right]\\
&=&\f{d}{dx}\E\left[\exp\left(ix\sqrt{\f{b_{n}}{n}}\CN_{n}^{\circ}(\phi_{\eta})\right)\right]\\
&=&\E\left[i\sqrt{\f{b_{n}}{n}}\CN_{n}^{\circ}(\phi_{\eta})e_{\eta,n}(x)\right].
\Eea
Since by construction $\phi_{\eta}=P_{\eta}*\phi$, we have
\Bea
\CN_{n}^{\circ}(\phi_{\eta})=\f{1}{\pi}\int_{-\infty}^{\infty}\phi(\mu)\Im \g_{n}^{\circ}(z_{\mu})\;d\mu,\;\;\mbox{where} \;z_{\mu}=\mu+i\eta.
\Eea
Hereinafter, we use the finiteness of $\int_{\R}|\phi(\mu)|\;d\mu$ for $\phi\in H^{s}, s>\f{1}{2}$, when changing the order of integration. For notational convenience, from now on we will denote $e_{\eta,n}(x)$ by $e(x)$. Therefore
\Bea
\f{d}{dx}Z_n(\phi_\eta)&=&\E\left[i\sqrt{\f{b_{n}}{n}}e(x)\f{1}{\pi}\int_{-\infty}^{\infty}\phi(\mu)\Im \g_{n}^{\circ}(z_{\mu})\;d\mu\right]\\
&=&\f{1}{2\pi}\sqrt{\f{b_{n}}{n}}\int_{-\infty}^{\infty}\phi(\mu)\E\left[e(x)\text{Tr}\left(G^{\circ}(z_{\mu})-G^{\circ}(\bar z_{\mu})\right)\right]\;d\mu\\
&=&\f{1}{2\pi}\sqrt{\f{b_{n}}{n}}\int_{-\infty}^{\infty}\phi(\mu)\left(Y_{n}(z_{\mu},x)-Y_{n}(\bar z_{\mu},x)\right)\;d\mu,
\Eea
where
\bea
Y_{n}(z,x)&=&\E\left[e(x)\text{Tr}\left(G^{\circ}(z)\right)\right]\nonumber\\
&=&\E\left[e^{\circ}(x)\text{Tr} (G(z))\right]\nonumber\\
&=&n\E\left[G_{11}(z)e^{\circ}(x)\right]\nonumber\\
&=&-n\E\left[\left(A^{-1}\right)^{\circ}e_{1}(x)\right]-n\E\left[\left(A^{-1}\right)^{\circ}(e(x)-e_{1}(x))\right],\label{eqn: G_11 is written as A^-1}\\\nonumber\\
e_{1}(x)&=&\exp\left[ix\sqrt{\f{b_{n}}{n}}\left(\CN_{n-1}(\phi_\eta)\right)^{\circ}\right],\nonumber\\
\left(\CN_{n-1}(\phi_\eta)\right)^{\circ}&=&\f{1}{\pi}\int_{-\infty}^{\infty}\phi(\mu)\Im\left(\g_{n-1}(z)\right)^{\circ}\;d\mu,\nonumber\\
\g_{n-1}(z)&=&\text{Tr}G^{(1)}(z),\nonumber\\
A(z)&=&z-\f{1}{\sqrt{b_{n}}}w_{11}+\left\langle G^{(1)}m^{(1)},m^{(1)}\right\rangle,\label{definition of A}\\
m^{(1)}&=&\f{1}{\sqrt{b_{n}}}(w_{12},w_{13},\ldots,w_{1n})^T,\label{definition of m1}\\
G^{(1)}(z)&=&\left(G_{ij}^{(1)}(z)\right)_{i,j=2}^{n}=(M^{(1)}-z I)^{-1},\label{definition of G1}
\eea
and $M^{(1)}$ is the main bottom $(n-1)\times (n-1)$ minor of $M$. In the above notation $\langle \cdot,\cdot \rangle$   represents the inner product of two complex  vectors, i.e., $\langle x, y\rangle=\bar{y}^Tx$ for  $x,y \in \mathbb C^{n-1}$. The equation \eqref{eqn: G_11 is written as A^-1} follows from the Schur complement lemma, which says that
\bea\label{eqn: schur complement formula}
G_{11}(z)=\f{1}{\f{1}{\sqrt{b_{n}}}w_{11}-z-\left\langle G^{(1)}m^{(1)},m^{(1)}\right\rangle}=-\f{1}{A(z)}.
\eea
Now we rewrite
\bea\label{eqn: T_1+T_2}
\sqrt{\f{b_{n}}{n}}Y_{n}(z,x)&=&-\sqrt{nb_{n}}\E\left[\left(A^{-1}\right)^{\circ}e_{1}(x)\right]-\sqrt{nb_{n}}\E\left[\left(A^{-1}\right)^{\circ}(e(x)-e_{1}(x))\right]\nonumber \\
&=:&T_{1}+T_{2}.
\eea
Using Taylor expansion we have
\bea\label{eqn: expansion of A^{-1}}
A^{-1}=\f{1}{\E[A]}-\f{A^{\circ}}{(\E[A])^{2}}+\f{(A^{\circ})^{2}}{(\E[A])^{3}}-\f{(A^{\circ})^{3}}{(\E[A])^{4}}+\f{(A^{\circ})^{4}}{A(\E[A])^{4}}.
\eea
Therefore, we can estimate
\bea
T_{1}&=&-\sqrt{nb_{n}}\E\left[\left(A^{-1}\right)^{\circ}e_{1}(x)\right]\nonumber\\
&=&-\sqrt{nb_{n}}\E\left[\left(A^{-1}\right)e_{1}^{\circ}(x)\right]\nonumber\\
&=&-\sqrt{nb_{n}}\E\left[\left(\f{1}{\E[A]}-\f{A^{\circ}}{(\E[A])^{2}}+\f{(A^{\circ})^{2}}{(\E[A])^{3}}-\f{(A^{\circ})^{3}}{(\E[A])^{4}}+\f{(A^{\circ})^{4}}{A(\E[A])^{4}}\right)e_{1}^{\circ}(x)\right]\nonumber\\
&=&\sqrt{nb_{n}}\E\left[\left(\f{A^{\circ}}{(\E[A])^{2}}-\f{(A^{\circ})^{2}}{(\E[A])^{3}}\right)e_{1}^{\circ}(x)\right]+\sqrt{nb_{n}}\E\left[\left(\f{(A^{\circ})^{3}}{(\E[A])^{4}}-\f{(A^{\circ})^{4}}{A(\E[A])^{4}}\right)e_{1}^{\circ}(x)\right].\label{eqn: first estimate of T1}
\eea
Now we shall estimate each term individually. First of all, since $M$ is a real symmetric matrix we have
\bea
\|G(z)\|\leq\f{1}{|\Im z|}\label{eqn: norm of G is bounded by Imz},
\eea
and, in particular, $1/|A|\leq 1/|\Im z|$. It can also be checked that $1/|\E[A]|\leq 1/|\Im z|$. Hereinafter $\|X\|$ is the spectral norm of a matrix $X$. Using the above equation \eqref{eqn: norm of G is bounded by Imz} and the estimates \eqref{eqn: estimate of A^4}, \eqref{eqn: bound on variance of b_nA(z_1)A(z_2)}, we have
$$\left|\sqrt{nb_{n}}\E\left[\f{(A^{\circ})^{4}}{A(\E[A])^{4}}e_{1}^{\circ}(x)\right]\right| \leq \f{\sqrt{nb_{n}}}{|\Im z|^{5}}\E\left[|(A^{\circ})^{4}|\right]
=\f{\sqrt{nb_{n}}}{|\Im z|^{5}}O(b_{n}^{-2})
=O\left(\sqrt{\f{n}{b_{n}^{3}}}\right)\to 0,
$$
\comment{
\Bea
\left|\sqrt{nb_{n}}\E\left[\f{(A^{\circ})^{4}}{A(\E[A])^{4}}e_{1}^{\circ}(x)\right]\right|&\leq&\f{\sqrt{nb_{n}}}{|\Im z|^{5}}\E\left[|(A^{\circ})^{4}|\right]\\
&=&\f{\sqrt{nb_{n}}}{|\Im z|^{5}}O(b_{n}^{-2})\\
&=&O\left(\sqrt{\f{n}{b_{n}^{3}}}\right)\to 0,
\Eea
\Bea
\left|\sqrt{nb_{n}}\E\left[\f{(A^{\circ})^{3}}{(\E[A])^{4}}e_{1}^{\circ}(x)\right]\right|&\leq&\f{\sqrt{nb_{n}}}{|\Im z|^{4}}\E\left[|(A^{\circ})^{3}|\right]\\
&=&\f{\sqrt{nb_{n}}}{|\Im z|^{4}}O(b_{n}^{-3/2})\\
&=&O\left(\sqrt{\f{n}{b_{n}^{2}}}\right)\to 0,
\Eea}
$$\left|\sqrt{nb_{n}}\E\left[\f{(A^{\circ})^{3}}{(\E[A])^{4}}e_{1}^{\circ}(x)\right]\right| \leq \f{\sqrt{nb_{n}}}{|\Im z|^{4}}\E\left[|(A^{\circ})^{3}|\right]
 = \f{\sqrt{nb_{n}}}{|\Im z|^{4}}O(b_{n}^{-3/2})
= O\left(\sqrt{\f{n}{b_{n}^{2}}}\right)\to 0,$$

\Bea
\left|\sqrt{nb_{n}}\E\left[\f{(A^{\circ})^{2}}{(\E[A])^{3}}e_{1}^{\circ}(x)\right]\right|&\leq&\f{\sqrt{nb_{n}}}{|\Im z|^{3}}\left|\E\left[e_{1}^{\circ}(x)\E_{1}\left[(A^{\circ})^{2}\right]\right]\right|\\
&\leq&C\sqrt{\f{n}{b_{n}}}\left|\E\left[e_{1}^{\circ}(x)\left(b_{n}\E_{1}(A^{\circ})^{2}\right)\right]\right|\\
&\leq&C\sqrt{\f{n}{b_{n}}}\left[\text{Var}(e_{1}^{\circ}(x))\right]^{1/2}\left[\text{Var}\left(b_{n}\E_{1}(A^{\circ})^{2}\right)\right]^{1/2}\\
&\leq&C\sqrt{\f{n}{b_{n}}}O(b_{n}^{-1/2})\\
&=&O\left(\sqrt{\f{n}{b_{n}^{2}}}\right)\to 0, \ \mbox{as}\ n\to \infty,
\Eea
Therefore, we have
$$T_{1}=\f{\sqrt{nb_{n}}}{(\E[A])^{2}}\E\left[A^{\circ}e_{1}^{\circ}(x)\right]+O\left(\sqrt{\f{n}{b_{n}^{2}}}\right)=\f{\sqrt{nb_{n}}}{(\E[A])^{2}}\E\left[e_{1}^{\circ}(x)\E_{1}(A^{\circ})\right]+O\left(\sqrt{\f{n}{b_{n}^{2}}}\right).
$$
Now
\Bea
A^{\circ}&=&-\f{1}{\sqrt{b_{n}}}w_{11}+\f{1}{b_{n}}\sum_{\stackrel{i\neq j}{i,j \in I_1}}G_{ij}^{(1)}w_{1i}w_{1j}+\f{1}{b_{n}}\sum_{i\in I_1}\left(G_{ii}^{(1)}w_{1i}^{2}-\E[G_{ii}^{(1)}]\right),
\Eea
where $I_1=\{1< j \leq n: (1,j)\in I_n\}$.
Therefore,
\Bea
\E_{1}[A^{\circ}(z)]=\f{1}{b_{n}}\sum_{i\in I_1}\left(G_{ii}^{(1)}-\E[G_{ii}^{(1)}]\right)
\Eea
and hence
\bea
T_{1}&=&\f{\sqrt{nb_{n}}}{(\E[A])^{2}}\E\left[e_{1}^{\circ}(x)\E_{1}(A^{\circ})\right]+O\left(\sqrt{\f{n}{b_{n}^{2}}}\right)\nonumber\\
&=&\f{\sqrt{nb_{n}}}{{(\E[A])^{2}}}\E\left[e_{1}^{\circ}(x)\f{1}{b_{n}}\sum_{i\in I_1}(G_{ii}^{(1)}-\E[G_{ii}^{(1)}])\right]+O\left(\sqrt{\f{n}{b_{n}^{2}}}\right)\nonumber\\
&=&\f{\sqrt{nb_{n}}}{{(\E[A])^{2}}}\;2\E\left[(G_{22}^{(1)})^{\circ}e_{1}^{\circ}(x)\right]+O\left(\sqrt{\f{n}{b_{n}^{2}}}\right)\nonumber\\
&=&\f{\sqrt{nb_{n}}}{{(\E[A])^{2}}}\;\f{2}{n}\;\E[\g_{n-1}^{\circ}e_{1}^{\circ}(x)]+O\left(\sqrt{\f{n}{b_{n}^{2}}}\right)\nonumber\\
&=&\sqrt{\f{b_{n}}{n}}\f{2}{(\E[A])^{2}}\E\left[\g_{n-1}^{\circ}e_{1}(x)\right]+O\left(\sqrt{\f{n}{b_{n}^{2}}}\right).\label{eqn: second estimate of T1}
\eea
Hereinafter, all bounds (implicitly) depending on $z$ hold uniformly on the set $\{\mu+i\eta:\mu\in\R\}$ for any given $\eta>0$. Now
\Bea
\left|\E\left[\g_{n-1}^{\circ}e_{1}(x)\right]-\E\left[\g_{n}^{\circ}e(x)\right]\right|&=&\left|\E\left[\g_{n-1}^{\circ}e_{1}(x)\right]-\E\left[\g_{n}^{\circ}e_{1}(x)\right]+\E\left[\g_{n}^{\circ}e_{1}(x)\right]-\E\left[\g_{n}^{\circ}e(x)\right]\right|\\
&\leq&\left(\E\left[\left|\g_{n-1}^{\circ}-\g_{n}^{\circ}\right|^{4}\right]\right)^{1/4}+\left|\E\left[\g_{n}^{\circ}(e_{1}(x)-e(x))\right]\right|\\
&=&O(b_{n}^{-1/2})+\left|\E\left[\g_{n}^{\circ}(e_{1}(x)-e(x))\right]\right|.
\Eea
The last equality follows from \eqref{eqn: fourth moment of g_n^1-g_n}. We estimate
\bea
e(x)-e_{1}(x)&=&\exp\left[ix\sqrt{\f{b_{n}}{n}}\CN_{n}^{\circ}(\phi_\eta)\right]-\exp\left[ix\sqrt{\f{b_{n}}{n}}\CN_{n-1}^{\circ}(\phi_\eta)\right]\nonumber\\
&=&\left(\exp\left[ix\sqrt{\f{b_{n}}{n}}\CN_{n}^{\circ}(\phi_\eta)-ix\sqrt{\f{b_{n}}{n}}\CN_{n-1}^{\circ}(\phi_\eta)\right]-1\right)e_{1}(x)\nonumber\\
&=&ix\sqrt{\f{b_{n}}{n}}\left(\CN_{n}^{\circ}(\phi_\eta)-\CN_{n-1}^{\circ}(\phi_\eta)\right)e_{1}(x)+\f{b_{n}}{n}O\left(x^{2}\left(\CN_{n}^{\circ}(\phi_\eta)-\CN_{n-1}^{\circ}(\phi_\eta)\right)^{2}e_{1}(x)\right)\nonumber\\
&=&\f{ix}{\pi}\sqrt{\f{b_{n}}{n}}\int_{-\infty}^{\infty}\left[\phi(\mu)\Im\left(\g_{n}^{\circ}-\g_{n-1}^{\circ}\right)e_{1}(x)+\sqrt{\f{b_{n}}{n}}\phi(\mu)O(\g_{n}^{\circ}-\g_{n-1}^{\circ})^{2}\right]\;d\mu.\label{eqn: estimete of e_1(x)-e(x)}
\eea
Therefore
\Bea
\E\left[\g_{n}^{\circ}(e(x)-e_{1}(x))\right]=\E\left[\f{ix}{\pi}\sqrt{\f{b_{n}}{n}}\int_{-\infty}^{\infty}\phi(\mu)\left[\Im\left(\g_{n}^{\circ}-\g_{n-1}^{\circ}\right)e_{1}(x)\g_{n}^{\circ}+\sqrt{\f{b_{n}}{n}}\g_{n}^{\circ}O\left(\g_{n}^{\circ}-\g_{n-1}^{\circ}\right)^{2}\right]\;d\mu\right].
\Eea
Using estimates \eqref{eqn: bound on the variance of gamma_n} and \eqref{eqn: fourth moment of g_n^1-g_n}, we have
$$\left|\E\left[\Im\left(\g_{n}^{\circ}-\g_{n-1}^{\circ}\right)e_{1}(x)\g_{n}^{\circ}\right]\right|\leq \left(\E[|\g_{n}^{\circ}|^{2}]\right)^{1/2}\left(\E\left[\left|e_{1}(x)\Im\left(\g_{n}^{\circ}-\g_{n-1}^{\circ}\right)\right|^{2}\right]\right)^{1/2}
=O\left(\sqrt{\f{n}{b_{n}}}\sqrt{\f{1}{b_{n}}}\right).
$$
Similarly,
\Bea
\E\left[\g_{n}^{\circ}O\left(\g_{n}^{\circ}-\g_{n-1}^{\circ}\right)^{2}\right]&=&O\left(\sqrt{\f{n}{b_{n}}}\f{1}{b_{n}}\right).
\Eea
Therefore,
\Bea
\left|\E\left[\g_{n-1}^{\circ}e_{1}(x)\right]-\E\left[\g_{n}^{\circ}e(x)\right]\right|=O\left(\f{1}{\sqrt{b_{n}}}\right).
\Eea
From the equation \eqref{eqn: second estimate of T1} and the above estimates we have
\bea
T_{1}&=&\sqrt{\f{b_{n}}{n}}\f{2}{(\E[A])^{2}}\E\left[\g_{n-1}^{\circ}e_{1}(x)\right]+O\left(\f{\sqrt{n}}{b_{n}}\right)\nonumber\\
&=&\sqrt{\f{b_{n}}{n}}\f{2}{(\E[A])^{2}}\left[\E[\g_{n}^{\circ}e(x)]+O\left(\f{1}{\sqrt{b_{n}}}\right)\right]+O\left(\f{\sqrt{n}}{b_{n}}\right)\nonumber\\
&=&\sqrt{\f{b_{n}}{n}}\f{2}{(\E[A])^{2}}Y_{n}(z,x)+O\left(\f{\sqrt{n}}{b_{n}}\right).\label{eqn: third estimate of T1}
\eea
Now consider $T_{2}$. Using \eqref{eqn: estimete of e_1(x)-e(x)} and \eqref{eqn: trace of difference of resolvents interms of A and B} we have
\Bea
T_{2}&=&-\sqrt{nb_{n}}\E\left[(A^{-1})^{\circ}(e(x)-e_{1}(x))\right]\\
&=&-\f{ixb_{n}}{\pi}\E\left[(A^{-1})^{\circ}\int_{-\infty}^{\infty}\phi(\mu)\Im\left(\g_{n}^{\circ}-\g_{n-1}^{\circ}\right)e_{1}(x)\;d\mu\right]\\
&&-\f{1}{\pi}\sqrt{\f{b_{n}^{3}}{n}}\E\left[(A^{-1})^{\circ}\int_{-\infty}^{\infty}\phi(\mu)O(\g_{n}^{\circ}-\g_{n-1}^{\circ})^{2}\;d\mu\right]\\
&=&-\f{ixb_{n}}{\pi}\int_{-\infty}^{\infty}\phi(\mu)\E\left[e_{1}(x)(A^{-1})^{\circ}\Im(\g_{n}^{\circ}-\g_{n-1}^{\circ})\right]\;d\mu+\sqrt{\f{b_{n}^{3}}{n}}O\left(\f{1}{b_{n}}\right)\\
&=&-\f{ixb_{n}}{\pi}\int_{-\infty}^{\infty}\phi(\mu)\E\left[e_{1}(x)(A^{-1})^{\circ}\Im\left(\g_{n}^{\circ}-\g_{n-1}^{\circ}\right)\right]\;d\mu+O\left(\sqrt{\f{b_{n}}{n}}\right)\\
&=&-\f{ixb_{n}}{\pi}\int_{-\infty}^{\infty}\phi(\mu)\E\left[e_{1}(x)(A^{-1})^{\circ}\Im\left(\g_{n}-\g_{n-1}\right)^{\circ}\right]\;d\mu+O\left(\sqrt{\f{b_{n}}{n}}\right)\\
&=&\f{ixb_{n}}{\pi}\int_{-\infty}^{\infty}\phi(\mu)\E\left[e_{1}(x)(A^{-1})^{\circ}(z)\Im\left(\f{1+B(z_{\mu})}{A(z_{\mu})}\right)^{\circ}\right]\;d\mu+O\left(\sqrt{\f{b_{n}}{n}}\right)\\
&=&T_{21}-T_{22}+O\left( \sqrt{\f{b_{n}}{n}}\right),
\Eea
where $B(z)=\left\langle G^{(1)}G^{(1)}m^{(1)},m^{(1)}\right\rangle$ and
\Bea
T_{21}&=&\f{xb_{n}}{2\pi}\int_{-\infty}^{\infty}\phi(\mu)\E\left[e_{1}(x)(A^{-1})^{\circ}(z)\left(\f{1+B(z_{\mu})}{A(z_{\mu})}\right)^{\circ}\right]d\mu,\\
T_{22}&=&\f{xb_{n}}{2\pi}\int_{-\infty}^{\infty}\phi(\mu)\E\left[e_{1}(x)(A^{-1})^{\circ}(z)\left(\f{\overline{1+B(z_{\mu})}}{\overline{A(z_{\mu})}}\right)^{\circ}\right]d\mu.
\Eea
Using $\Im\left\langle G^{(1)}m^{(1)},m^{(1)}\right\rangle=\Im z\left\langle G^{(1)*}G^{(1)}m^{(1)},m^{(1)} \right\rangle$, it can be easily verified that 
\bea\label{eqn: bounds of B/A, 1/EA, and EB by 1/Imz}
\left|\f{B(z)}{A(z)}\right|\leq \f{1}{|\Im z|},\;\f{1}{|\E[A(z)]|}\leq \f{1}{|\Im z|},\;\text{and}\;|\E[B(z)]|\leq \f{2}{|\Im z|^{2}}.
\eea
Applying $A^{-1}=\f{1}{\E[A]}-\f{A^{\circ}}{(\E[A])^{2}}+\f{(A^{\circ})^{2}}{A(\E[A])^{2}}$ to $A^{-1}(z),\ A^{-1}(z_\mu)$ and using \eqref{eqn: estimate of A^4}, we get

\Bea
&&b_{n}\E\left[e_{1}(x)(A^{-1})^{\circ}(z)\left(\f{1+B(z_{\mu})}{A(z_{\mu})}\right)^{\circ}\right]\\
&=&b_n\E\left[e_{1}(x)\left\{  \frac{A^{\circ}(z)}{\E^2[A(z)]}\left(-\frac{B^{\circ}(z_\mu)}{\E[A(z_\mu)]} +\frac{(1+B(z_\mu))A^{\circ}(z_\mu)}{\E^2[A(z_\mu)]}-\frac{\E[B(z_\mu)A^{\circ}(z_\mu)]}{\E^2[A(z_\mu)]}\right)\right\}\right]+O(b_n^{-1/2})\\
&=&b_n\E\left[e_{1}(x)\left\{ \frac{A^{\circ}(z)}{\E^2[A(z)]} \left(-\frac{B^{\circ}(z_\mu)}{\E[A(z_\mu)]} +\frac{(1+\E[B(z_\mu)])A^{\circ}(z_\mu)}{\E^2[A(z_\mu)]}+\frac{(B^{\circ}(z_\mu)A^{\circ}(z_\mu))^{\circ}}{\E^2[A(z_\mu)]}\right)\right\}\right]+O(b_n^{-1/2})\\
&=&\frac{(1+\E [B(z_\mu)])}{\E^2[A(z)]\E^2[A(z_\mu)]}\E \left[e_1(x)b_n A^{\circ}(z) A^{\circ}(z_\mu)\right]-\frac{\E \left[e_1(x) b_n{A^{\circ}(z)} {B^{\circ}(z_\mu)} \right]}{\E^2[A(z)]\E[A(z_\mu)]}+O(b_n^{-1/2})\\
&=&\frac{(1+\E [B(z_\mu)])}{\E^2[A(z)]\E^2[A(z_\mu)]}\E \left[e_1(x)\E_1\left(b_n A^{\circ}(z)A^{\circ}(z_\mu)\right)\right]-\f{\E\left[e_1(x)\E_1\left[ b_nA^{\circ}(z) B^{\circ}(z_\mu)\right]\right]}{\E^2[A(z)] \E[A(z_\mu)]}+O(b_n^{-1/2}).
\Eea
Using \eqref{eqn: bound on variance of b_nA(z_1)A(z_2)}, from the last expression  we get
\bea\label{eqn: e_1(x)b_nA(z)B(z)}
&&b_{n}\E\left[e_{1}(x)(A^{-1})^{\circ}(z)\left(\f{1+B(z_{\mu})}{A(z_{\mu})}\right)^{\circ}\right]\nonumber \\
&=&\frac{(1+\E B(z_\mu))}{\E^2[A(z)]\E^2[A(z_\mu)]}\E [e_1(x)]\E \left[b_n A^{\circ}(z)A^{\circ}(z_\mu)\right]-\f{\E[e_1(x)] \E\left[b_n A^{\circ}(z) B^{\circ}(z_\mu)\right]}{\E^2[A(z)] \E[A(z_\mu)]}+O(b_n^{-1/2}).
\eea
Define
\Bea
D_{n}(z,z_{\mu})
&=&\f{(1+\E[B(z_{\mu})])\E[b_{n}\E_{1}\left\{A^{\circ}(z)A^{\circ}(z_{\mu})\right\}]}{\E^{2}[A(z)]\E^{2}[A(z_{\mu})]}-\f{\E[b_{n}\E_{1}\{A^{\circ}(z)B^{\circ}(z_{\mu})\}]}{\E^{2}[A(z)]\E[A(z_{\mu})]}.
\Eea
Also, using \eqref{eqn: estimete of e_1(x)-e(x)} and \eqref{eqn: fourth moment of g_n^1-g_n}, we have
\Bea
\E[e(x)]-\E[e_{1}(x)]&=&\E\left[\f{ix}{\pi}\sqrt{\f{b_{n}}{n}}\int_{-\infty}^{\infty}\phi(\mu)\Im\left(\g_{n}^{\circ}-\g_{n-1}^{\circ}\right)e_{1}(x)\ d\mu+\f{b_{n}}{n}x^2\int_{-\infty}^{\infty} \phi(\mu)O(\g_{n}^{\circ}-\g_{n-1}^{\circ})^{2}\;d\mu\right]\\
&=&O(n^{-1/2})+O(n^{-1}).
\Eea
Therefore 
\bea
\E[e_{1}(x)]=Z_{n}(\phi_{\eta})+O({n}^{-1/2}).\label{eqn: expectation of e1 in terms of Z(phieta)}
\eea
Combining \eqref{eqn: T_1+T_2}, \eqref{eqn: third estimate of T1}, \eqref{eqn: e_1(x)b_nA(z)B(z)}, and \eqref{eqn: expectation of e1 in terms of Z(phieta)}, we get
\Bea
\sqrt{\f{b_{n}}{n}}Y_{n}(z,x)&=&T_{1}+T_{2}\\
&=&\f{2}{\E^{2}[A]}\sqrt{\f{b_{n}}{n}}Y_{n}(z,x)+\f{x}{2\pi}\E[e_{1}(x)]\int_{-\infty}^{\infty} [D_{n}(z,z_{\mu})-D_{n}(z,\bar z_{\mu}) ]\phi(\mu)\;d\mu +O(b_n^{-1/2})\\
&=&\f{2}{\E^{2}[A]}\sqrt{\f{b_{n}}{n}}Y_{n}(z,x)+\f{x}{2\pi}Z_{n}(\phi_\eta{})\int_{-\infty}^{\infty} [D_{n}(z,z_{\mu})-D_{n}(z,\bar z_{\mu})] \phi(\mu)\;d\mu + o(1)
\Eea
\Bea
&\approx &2f^{2}(z)\tilde Y_{n}(z,x)+\f{x}{2\pi}Z_{n}(\phi_\eta{} ) \int_{-\infty}^{\infty}[D_{n}(z,z_{\mu})-D_{n}(z,\bar z_{\mu})] \phi(\mu)\;d\mu + o(1),
\Eea
where $\tilde Y_{n}(z,x)=\sqrt{\f{b_{n}}{n}}Y_{n}(z,x)$. Therefore,
\Bea
\tilde Y_{n}(z,x)=Z_{n}(\phi_{\eta})\f{x}{2\pi}\int_{-\infty}^{\infty}\left(C_{n}(z,z_{\mu})-C_{n}(z,\bar z_{\mu})\right)\phi(\mu)\;d\mu+ o(1)
\Eea
uniformly in $z$ with $\Im z=\eta$, where $C_{n}(z,z_{\mu})=\f{D_{n}(z,z_{\mu})}{1-2f^{2}(z)}$ and $f(z)$ is given in \eqref{eqn: asymptotic of f_n(z)}. Hence

\Bea
\f{d}{dx}Z_{n}(\phi_{\eta})&=&\f{1}{2\pi}\int_{-\infty}^{\infty}\phi(\mu)\left(\tilde Y_{n}(z_{\mu},x)-\tilde Y_{n}(\bar z_{\mu},x)\right)\;d\mu\\
&=&\f{1}{2\pi}\int_{-\infty}^{\infty}\phi(\mu_{1})\left[\f{x}{2\pi}Z_{n}(\phi_{\eta})\int_{-\infty}^{\infty}\phi(\mu_{2})\left(C_{n}(z_{\mu_{1}},z_{\mu_{2}})-C_{n}(z_{\mu_{1}},\bar z_{\mu_{2}})\right)d\mu_{2} \right.\\
&&\left.-\f{x}{2\pi}Z_{n}(\phi_{\eta})\int_{-\infty}^{\infty}\phi(\mu_{2})\left(C_{n}(\bar z_{\mu_{1}},z_{\mu_{2}})-C_{n}(\bar z_{\mu_{1}},\bar z_{\mu_{2}})\right)d\mu_{2} \right]d\mu_{1}+o(1)\\
&=&-\f{x}{4\pi^{2}}Z_{n}(\phi_{\eta})\int_{-\infty}^{\infty}\int_{-\infty}^{\infty}\phi(\mu_{1})\phi(\mu_{2})\left[C_{n}(z_{\mu_{1}},\bar z_{\mu_{2}})+C_{n}(\bar z_{\mu_{1}},z_{\mu_{2}})\right.\\
&&\left.-C_{n}(z_{\mu_{1}},z_{\mu_{2}})-C_{n}(\bar z_{\mu_{1}},\bar z_{\mu_{2}})\right]d\mu_{2}d\mu_{1}+o(1)\\
&=&-xZ_{n}(\phi_{\eta})V_{n}(\phi,\eta)+o(1).
\Eea
To find the limit of $V_{n}(\phi,\eta)$, we shall calculate limit of $[C_{n}(z_{\mu_{1}},\bar z_{\mu_{2}})+C_{n}(\bar z_{\mu_{1}},z_{\mu_{2}})-C_{n}(z_{\mu_{1}},z_{\mu_{2}})-C_{n}(\bar z_{\mu_{1}},\bar z_{\mu_{2}})]$ as $n\to\infty$.
Using \eqref{eqn: b_nA(z_1)A(z_2)} and \eqref{A(z_1)B(z_2)},
\Bea
D_{n}(z,z_{\mu})&=&\f{(1+\E[B(z_{\mu})])\E[b_{n}\E_{1}\left\{A^{\circ}(z)A^{\circ}(z_{\mu})\right\}]}{\E^{2}[A(z)]\E^{2}[A(z_{\mu})]}-\f{\E[b_{n}\E_{1}\{A^{\circ}(z)B^{\circ}(z_{\mu})\}]}{\E^{2}[A(z)]\E[A(z_{\mu})]}\\
&=&\f{1+\E[B(z_{\mu})]}{\E^{2}[A(z)]\E^{2}[A(z_{\mu})]}\E\Big[\f{2}{b_{n}}\sum_{i,j\in I_{1}}G_{ij}^{(1)}(z)G_{ij}^{(1)}(z_{\mu})+\s^{2}+\f{\kappa_{4}}{b_{n}}\sum_{i\in I_{1}}G_{ii}^{(1)}(z)G_{ii}^{(1)}(z_{\mu})\\
&&+\f{1}{b_{n}}\widetilde{\g_{n-1}}(z)\widetilde{\g_{n-1}}(z_{\mu})])\Big]-\f{1}{\E^{2}[A(z)]\E[A(z_{\mu})]}\f{d}{dz_{\mu}}\E\Big[\f{2}{b_{n}}\sum_{i,j\in I_{1}}G_{ij}^{(1)}(z)G_{ij}^{(1)}(z_{\mu})\\
&& +\s^{2}+\f{\kappa_{4}}{b_{n}}\sum_{i\in I_{1}}G_{ii}^{(1)}(z)G_{ii}^{(1)}(z_{\mu})+\f{1}{b_{n}}\widetilde{\g_{n-1}}(z)\widetilde{\g_{n-1}}(z_{\mu})])\Big].
\Eea
Now using \eqref{eqn: variance of sum of Gii is small}, we get
\Bea
\left|\E\left[\f{1}{b_{n}}\widetilde{\g_{n-1}}(z)\widetilde{\g_{n-1}}(z_{\mu})])\right]\right|\leq\f{1}{b_{n}}\sqrt{\text{Var}\sum_{i\in I_{1}}G_{ii}^{(1)}}\sqrt{\text{Var}\sum_{i\in I_{1}}G_{ii}^{(1)}}=O\left(\f{1}{b_{n}}\right).
\Eea
Letting $n\to\infty$, using \eqref{eqn: limit of A(z),B(z)} we have
\bea
\lim_{n\to\infty}D_{n}(z,z_{\mu})&=&f^{2}(z)f^{2}(z_{\mu})(1+2f'(z_{\mu}))\left[\lim_{n\to\infty}\E[T_{n}]+\s^{2}+\kappa_{4}\lim_{n\to\infty}\f{1}{b_{n}}\sum_{i\in I_{1}}\E\left[G_{ii}^{(1)}(z)G_{ii}^{(1)}(z_{\mu})\right]\right]\nonumber\\
&&+f^{2}(z)f(z_{\mu})\f{d}{dz_{\mu}}\left[\lim_{n\to\infty}\E[T_{n}]+\kappa_{4}\lim_{n\to\infty}\f{1}{b_{n}}\sum_{i\in I_{1}}\E\left[G_{ii}^{(1)}(z)G_{ii}^{(1)}(z_{\mu})\right]\right],\label{eqn: limit of Dn}
\eea
where
\Bea
T_{n}=\f{2}{b_{n}}\sum_{i,j\in I_{1}}G_{ij}^{(1)}(z)G_{ij}^{(1)}(z_{\mu}).
\Eea
Since $\text{Var}(G_{ii})=O(1/b_{n})$ (see \eqref{eqn: variance of sum of Gii is small}), we have
\Bea
\lim_{n\to\infty}\f{1}{b_{n}}\sum_{i\in I_{1}}\E\left[G_{ii}^{(1)}(z)G_{ii}^{(1)}(z_{\mu})\right]=\lim_{n\to\infty}\f{1}{b_{n}}\sum_{i\in I_{1}}\E\left[G_{ii}^{(1)}(z)\right]\E\left[G_{ii}^{(1)}(z_{\mu})\right]=2f(z)f(z_{\mu}).
\Eea
We shall show in the appendix \eqref{proof of ET_n} that
\bea\label{eqn: limit of ET_n}
\lim_{n\to\infty}\E[T_{n}]=\f{1}{4\pi^{3}}\int_{-2\sqrt{2}}^{2\sqrt{2}}\int_{-2\sqrt{2}}^{2\sqrt{2}}\f{\sqrt{8-x^{2}}\sqrt{8-y^{2}}}{(x-z)(y-z_{\mu})}F(x,y)\1_{\{x\neq y\}}\;dxdy,
\eea
where
\Bea
F(x,y)=2\int_{-\infty}^{\infty}\f{u-u^{3}}{2(1-u^{2})^{2}+u^{2}(x^{2}+y^{2})-u(1+u^{2})xy}\;ds,
\Eea
where $u=\f{\sin s}{s}$.
Therefore
\Bea
\lim_{n\to\infty}C_{n}(z_{\mu_{1}},z_{\mu_{2}})&=&\f{1}{1-2f^{2}(z_{\mu_{1}})}\left[f^{2}(z_{\mu_{1}})f^{2}(z_{\mu_{2}})(1+2f'(z_{\mu_{2}}))\lim_{n\to\infty}\E[T_{n}]+f^{2}(z_{\mu_{1}})f(z_{\mu_{2}})\lim_{n\to\infty}\f{d}{dz_{\mu_2}}\E[T_{n}]\right.\\
&& +\s^{2}f^{2}(z_{\mu_{1}})f^{2}(z_{\mu_{2}})(1+2f'(z_{\mu_{2}}))+2\kappa_{4}\Bigl\{f^{3}(z_{\mu_{1}})f^{3}(z_{\mu_{2}})(1+2f'(z_{\mu_{2}}))\\
&& +f^{3}(z_{\mu_{1}})f(z_{\mu_{2}})f'(z_{\mu_{2}})\Bigr\}\biggr].
\Eea
Hence
\Bea
V(\phi)&=&\lim_{\eta\downarrow 0}\lim_{n\to\infty}V_{n}(\phi,\eta)\\
&=&\f{\kappa_{4}}{16\pi^{2}}\left(\int_{-2\sqrt{2}}^{2\sqrt{2}}\f{4-\mu^{2}}{\sqrt{8-\mu^{2}}}\phi(\mu)\;d\mu\right)^{2}+\f{\s^{2}}{16\pi^{2}}\left(\int_{-2\sqrt{2}}^{2\sqrt{2}}\f{\mu\phi(\mu)}{\sqrt{8-\mu^{2}}}\;d\mu\right)^{2}\\
&&+\int_{-2\sqrt{2}}^{2\sqrt{2}}\int_{-2\sqrt{2}}^{2\sqrt{2}}\sqrt{(8-x^{2})(8-y^{2})}F(x,y) \int_{-2\sqrt{2}}^{2\sqrt{2}}\int_{-2\sqrt{2}}^{2\sqrt{2}}\f{\mu_{1}\phi(\mu_{1})}{(x-\mu_{1})\sqrt{8-\mu_{1}^{2}}}\f{\mu_{2}\phi(\mu_{2})}{(x-\mu_{2})^2\sqrt{8-\mu_{2}^{2}}}d\mu_{1}d\mu_{2}\ dx dy.
\Eea
This completes the proof of \eqref{defn: characteristic function corresponding to phieta} and the proof of Theorem \ref{thm: Main theorem 1}.
\end{proof}
\section{Appendix}

\begin{proof}[Proof of Proposition \ref{prop: bound on Var(gamma)}:]
Let us denote the averaging with respect to $\{w_{ij}; 1\leq i\leq k\ \mbox{or}\ 1\leq j\leq n\}$ by $\E_{\leq k}$ and the averaging with respect to $\{w_{kj}; 1\leq j\leq n\}$ by $\E_k$. Using the martingale difference technique (see \cite{dharmadhikari1968bounds}), we have
\Bea
\text{Var}\{\g_{n}\}&\leq& \sum_{k=1}^{n}\E\left[\left|\E_{\leq k-1}[\g_{n}]-\E_{\leq k}[\g_{n}]\right|^{2}\right]\\
&=&\sum_{k=1}^{n}\E\left[\left|\E_{\leq k-1}\left[\g_{n}-\E_{\leq k}[\g_{n}]\right]\right|^{2}\right]\\
&\leq &\sum_{k=1}^{n}\E\left[\E_{\leq k-1}\left|\g_{n}-\E_{k}[\g_{n}]\right|^{2}\right]\\
&=&\sum_{k=1}^{n}\E\left[\left|\g_{n}-\E_{k}[\g_{n}]\right|^{2}\right].
\Eea
Note that
\Bea
\E\left[\left|\g_{n}-\E_{1}[\g_{n}]\right|^{2}\right]&=&\E\left[\left|\text{Tr}(G)-\E_{1}[\text{Tr}(G)]\right|^{2}\right]\\
&=&\E\left[\left|\text{Tr}(G)-\E_{1}[\text{Tr}(G)]+\text{Tr}(G^{(1)})-\text{Tr}(G^{(1)})\right|^{2}\right]\\
&=&\E\left[\left|\text{Tr}(G-G^{(1)})-\E_{1}\left[\text{Tr}(G-G^{(1)})\right]\right|^{2}\right].
\Eea
From \eqref{difference between the traces of G and G^1} we have
\bea
\text{Tr}(G-G^{(1)})&=&-\f{1+B(z)}{A(z)}\label{eqn: trace of difference of resolvents interms of A and B}
\eea
where $A(z)=-G_{11}^{-1}$, $B(z)=\left\langle G^{(1)}G^{(1)}m^{(1)},m^{(1)}\right\rangle$, and $G^{(1)}$ is defined in \eqref{definition of G1}, and $m^{(1)}=\frac{1}{\sqrt{b_{n}}}(w_{12},w_{13},\ldots,w_{1n})^T$. Indeed,
\Bea
\E\left[\left|\g_{n}-\E_{1}[\g_{n}]\right|^{2}\right]&\leq&\E\left[\left|\f{1+B(z)}{A(z)}-\E_1\left[\f{1+B(z)}{A(z)}\right]\right|^{2}\right]\\
&\leq&2\E\left[\left|\f{1}{A(z)}-\E_{1}\left[\f{1}{A(z)}\right]\right|^{2}\right]+2\E\left[\left|\f{B(z)}{A(z)}-\E_{1}\left[\f{B(z)}{A(z)}\right]\right|^2\right].
\Eea
Now, by \eqref{eqn: norm of G is bounded by Imz} and \eqref{eqn: bounds of B/A, 1/EA, and EB by 1/Imz},
\Bea
\E_{1}\left[\left|\f{B(z)}{A(z)}-\E_{1}\left[\f{B(z)}{A(z)}\right]\right|^{2}\right]&\leq& \E_{1}\left[\left|\f{B(z)}{A(z)}-\f{\E_{1}[B(z)]}{E_{1}[A(z)]}\right|^{2}\right]\\
&\leq&\E_{1}\left[\left|\f{B_{1}^{\circ}}{\E_{1}[A]}-\f{A_{1}^{\circ}}{\E_{1}[A]}\f{B}{A}\right|^{2}\right]\\
&\leq&2\E_{1}\left[\left|\f{B_{1}^{\circ}}{\E_{1}[A]}\right|^{2}\right]+\f{2}{|\Im z|^{2}}\E_{1}\left[\left|\f{A_{1}^{\circ}}{\E_{1}[A]}\right|^{2}\right],
\Eea
where $A_{1}^{\circ}=A-\E_{1}[A]$. So it is enough to estimate $\E_{1}\left[\left|\f{A_{1}^{\circ}}{\E_{1}[A]}\right|^{2}\right]$ and $\E_{1}\left[\left|\f{B_{1}^{\circ}}{\E_{1}[A]}\right|^{2}\right]$. Note that
\Bea
A&=&z-\f{1}{\sqrt{b_{n}}}w_{11}+\left\langle G^{(1)}m^{(1)},m^{(1)}\right\rangle\\
A_{1}^{\circ}&=&-\f{1}{\sqrt{b_{n}}}w_{11}+\f{1}{b_{n}}\sum_{\stackrel{i\neq j}{i,j\in I_{1}}}G_{ij}^{(1)}w_{1i}w_{1j}+\f{1}{b_{n}}\sum_{i\in I_{1}}G_{ii}^{(1)}(w_{1i}^{2})^{\circ}.
\Eea
Therefore,
\bea
\E_{1}\left[\left| A_{1}^{\circ}\right|^{2}\right]&=&\E_{1}\left[\f{1}{b_{n}}w_{11}^{2}+\f{1}{b_{n}^{2}}\sum_{\stackrel{i\neq j}{i,j\in I_{1}}}G_{ij}^{(1)}w_{1i}w_{1j}\sum_{\stackrel{k\neq l}{k,l\in I_{1}}}\overline{G_{kl}^{(1)}}w_{1k}w_{1l}+\frac{1}{b_n^2}\sum_{i\in I_1}G_{ii}^{(1)}(w_{1i}^2)^{\circ}\sum_{l\in I_1}\overline{G_{ll}^{(1)}}(w_{1l}^2)^{\circ}\right]\nonumber\\
&=&\f{\s^{2}}{b_{n}}+\f{2}{b_{n}^{2}}\sum_{\stackrel{i\neq j}{i,j\in I_{1}}}|G_{ij}^{(1)}|^{2}+\f{\mu_{4}-1}{b_{n}^{2}}\sum_{i\in I_{1}}|G_{ii}^{(1)}|^{2}\nonumber\\
&\leq & \f{\s^{2}}{b_{n}}+\f{2}{b_{n}^{2}} \f{2b_{n}}{|\Im z|^{2}}+\f{\mu_{4}-1}{b_{n}^{2}} \f{2b_{n}}{|\Im z|^{2}}\nonumber\\
&\leq&\f{1}{b_{n}}\left(\s^{2}+\f{2+2\mu_{4}}{|\Im z|^{2}}\right)\label{estimate of A1 as O(1/b)}.
\eea

Now we want to estimate $\E_{1}\left[\left|B_{1}^{\circ}\right|^{2}\right]$, where $B=\left\langle G^{(1)}G^{(1)}m^{(1)},m^{(1)}\right\rangle=\left\langle H^{(1)}m^{(1)},m^{(1)}\right\rangle$, and $B_{1}^{\circ}=B-\E_{1}[B]$. Therefore,
\Bea
\E_{1}[B]=\f{1}{b_{n}}\sum_{i\in I_{1}}H_{ii}^{(1)}=\f{1}{b_{n}}\sum_{i\in I_{1}}\sum_{j=2}^{n}\left(G_{ij}^{(1)}\right)^{2},
\Eea
and
\Bea
B_{1}^{\circ}=\f{1}{b_{n}}\sum_{\stackrel{i\neq j}{i,j\in I_{1}}}H_{ij}^{(1)}w_{1i}w_{1j}+\f{1}{b_{n}}\sum_{i\in I_{1}}H_{ii}^{(1)}\left(w_{1i}^{2}\right)^{\circ}.
\Eea
Let us call $C_{0}=\E\left[(w_{1i}^{2})^{\circ}\right]^{2}$. Then
\Bea
\E_{1}[|B_{1}^{\circ}|^{2}]&=&\f{1}{b_{n}^{2}}\sum_{\stackrel{i\neq j}{i,j\in I_{1}}}|H_{ij}^{(1)}|^{2}+\f{C_{0}}{b_{n}^{2}}\sum_{i\in I_{1}}|H_{ii}^{(1)}|^{2}\\
&=&\f{1}{b_{n}^{2}}\sum_{\stackrel{i\neq j}{i,j\in I_{1}}}\left|\sum_{k=2}^{n}G_{ik}^{(1)}G_{kj}^{(1)}\right|^{2}+\f{C_{0}}{b_{n}^{2}}\sum_{i\in I_{1}}\left|\sum_{k=2}^{n}G_{ik}^{(1)}G_{ki}^{(1)}\right|^{2}\\
&\leq&\f{1}{b_{n}^{2}}\sum_{i\in I_{1}}\| (G^{(1)})^{2}\|^{2}+\f{C_{0}}{b_{n}^{2}} 2b_{n}\| (G^{(1)})^{2}\|^{2}\\
&=&\f{2}{b_{n}}\f{1}{|\Im z|^{4}}+\f{2C_{0}}{b_{n}}\f{1}{|\Im z|^{4}}\\
&=&\f{2(1+C_{0})}{b_{n}|\Im z|^{4}}.
\Eea
We also have
\Bea
\E\left[\left|\f{1}{A(z)}-\E_{1}\left[\f{1}{A(z)}\right]\right|^{2}\right]\leq\f{1}{|\Im z|^{2}}\E\left[\left|\f{A_{1}^{\circ}}{\E_{1}[A]}\right|^{2}\right] 
\Eea
Note that $\E_{1}[A]=z+\f{1}{b_{n}}\sum_{i\in I_{1}}G_{ii}^{(1)}$. Since $\Im G_{ii}^{(1)}>0$, we have $|\E_{1}[A]|\geq |\Im A|\geq y$. Also we know that $|G_{ii}^{(1)}|\leq 1/|\Im z|=1/y$. Therefore $|\E_{1}[A]|\geq |x|-\f{2}{y}$. Combining these we have 
\Bea
|\E_{1}[A]|>\max\left\{y,|x|-\f{2}{y}\right\}.
\Eea
Therefore,
\Bea
\E\left[\left|\g_{n}-\E_{1}[\g_{n}]\right|^{2}\right]&\leq&C_{1}\f{2(1+C_{0})}{b_{n}|\Im z|^{4}}|\E_{1}[A]|^{-2}+\f{C_{2}}{b_{n}}\left(\s^{2}+\f{2+2\mu_{4}}{|\Im z|^{2}}\right)\f{|\E_{1}[A]|^{-2}}{|\Im z|^{2}}\\
&\leq&\f{C}{b_{n}}\left(\f{1}{|\Im z|^{2}}+\f{1}{|\Im z|^{4}}\right)|\E_{1}[A]|^{-2},
\Eea
for some $C_{1},C_{2},C>0$ not depending on $z,n$. This implies
\Bea
\text{Var}(\g_{n})\leq\f{Cn}{b_{n}}\left(\f{1}{|\Im z|^{2}}+\f{1}{|\Im z|^{4}}\right)\left(\max\left\{y,|x|-\f{2}{y}\right\}\right)^{-2}.
\Eea
This completes the proof of proposition \ref{prop: bound on Var(gamma)}. \end{proof}

Now we proceed to the proofs of the asymptotic estimates. All the asymptotic estimates listed in Lemma \ref{main lemma without poincare condition} and Lemma \ref{main lemma with poincare condition} hold uniformly in the set $\{z\in\C:|\Im z|\geq \eta\}$ for any given $\eta>0$.
\begin{lem}\label{main lemma without poincare condition}
Let $M$ be an $n\times n$ symmetric band matrix as defined in \eqref{defn: random band matrix} which satisfies \eqref{eqn: moment condition} and $\E[|w_{ij}|^{8}]$ is uniformly bounded. Then    \begin{enumerate}[(i)]
\item\begin{equation}\label{difference between the traces of G and G^1}
G_{ii}^{(1)}-G_{ii}=\f{1}{A(z)}\left(G^{(1)}m^{(1)}\right)_{i}^{2}=\f{1}{A(z)}\left(\frac{1}{\sqrt{b_{n}}}\sum_{j\in I_{1}}G_{ij}^{(1)}w_{1j}\right)^{2}
\end{equation}
where $2\leq i\leq n$, $A(z)$, $m^{(1)}$ and $G^{(1)}$ are as defined in \eqref{definition of A}, \eqref{definition of m1} and \eqref{definition of G1}.
\item $\displaystyle{\left|\E\left[G_{ii}^{(1)}(z)\right]-\E[G_{ii}(z)]\right|=O\left(\f{1}{b_{n}}\right).}$
\item 
\begin{equation}\label{eqn: estimate of off-diagonal entries}
\E[|G_{12}|^{2}]=O\left(\f{1}{b_{n}}\right)\f{1}{|\Im z|^{6}},\
\E[|G_{12}|^{4}]=O\left(\f{1}{b_{n}^{2}}\right)\f{1}{|\Im z|^{12}}\ \mbox{and}\ \
\E[|G_{12}|^{8}]=O\left(\f{1}{b_{n}^{4}}\right)\f{1}{|\Im z|^{24}}.
\end{equation}
\item Let us denote the averaging with respect to $\{w_{1i}\}_{1\leq i\leq n}$ by $\E_1$. Then
\begin{equation}\label{eqn: b_nA(z_1)A(z_2)}
b_{n}\E_{1}\left[A^{\circ}(z_{1})A^{\circ}(z_{2})\right]=\s^{2}+\f{2}{b_{n}}\sum_{i,j\in I_{1}}G_{ij}^{(1)}(z_{1})G_{ij}^{(1)}(z_{2})+\f{\kappa_{4}}{b_{n}}\sum_{i\in I_{1}}G_{ii}^{(1)}(z_{1})G_{ii}^{(1)}(z_{2})+\f{1}{b_{n}}\widetilde{\g_{n-1}}(z_{1})\widetilde{\g_{n-1}}(z_{2})\end{equation}
where
$
\widetilde{\g_{n-1}}(z)=\sum_{i\in I_{1}}\left(G_{ii}^{(1)}-\E[G_{ii}^{(1)}(z)]\right)\ \
\text{and}\;\; I_{1}=\{1<i\leq n:(1,i)\in I_n\}.
$
\item \begin{equation}\label{A(z_1)B(z_2)}
\E_{1}\left[A^{\circ}(z_{1})B^{\circ}(z_{2})\right]=\f{d}{dz_{2}}\E_{1}\left[A^{\circ}(z_{1})A^{\circ}(z_{2})\right] \  \mbox{where}\  \displaystyle{B(z_{2})=\left\langle G^{(1)}(z_{2})G^{(1)}(z_{2})m^{(1)},m^{(1)}\right\rangle}.
\end{equation}
\end{enumerate}
\end{lem}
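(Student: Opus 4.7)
All five items stem from the Schur complement applied to the first row and column of $M$, so the plan is to establish (i) by block matrix inversion, deduce (ii) from (i) by conditioning on the minor $M^{(1)}$, obtain (iii) from the companion off-diagonal Schur formula together with a moment bound for a linear combination of independent weights, establish (iv) by an explicit conditional second-moment computation in $\{w_{1i}\}$, and derive (v) by resolvent differentiation. For (i), I would write $M - zI$ in block form with pivot $b_n^{-1/2}w_{11} - z$ and lower-right block $M^{(1)} - zI$. Standard block inversion gives $G_{11} = -1/A(z)$ and, for $i,j \geq 2$, $G_{ij} = G_{ij}^{(1)} + G_{11}(G^{(1)}m^{(1)})_i(G^{(1)}m^{(1)})_j$; setting $i=j$ and substituting $G_{11} = -1/A$ delivers (i). Taking expectations and using $|1/A(z)| \leq 1/|\Im z|$ deterministically, together with $\E_1[(G^{(1)}m^{(1)})_i^2] = b_n^{-1}\sum_{j \in I_1}(G_{ij}^{(1)})^2 \leq b_n^{-1}\|G^{(1)}\|^2 \leq b_n^{-1}|\Im z|^{-2}$, yields (ii); the subtlety that $A$ also depends on $m^{(1)}$ is absorbed by the deterministic upper bound on $|1/A|$, so no extra decorrelation is needed.

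For (iii), the companion identity is $G_{1j} = A(z)^{-1} b_n^{-1/2}\sum_{i \in I_1}w_{1i}G_{ij}^{(1)}$ for $j \geq 2$. I would pull $|A|^{-2k} \leq |\Im z|^{-2k}$ out, condition on $M^{(1)}$ so that $G^{(1)}$ is fixed, and estimate $\E_1\bigl|\sum_i w_{1i}G_{ij}^{(1)}\bigr|^{2k}$ by Marcinkiewicz--Zygmund (or by a direct expansion using the assumed uniformly bounded moments of $w_{1i}$) to get $C_k\bigl(\sum_i |G_{ij}^{(1)}|^2\bigr)^k \leq C_k |\Im z|^{-2k}$. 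Combining with the $b_n^{-k}$ prefactor produces the $O(b_n^{-k})$ orders claimed for $k=1,2,4$; the conservative power of $|\Im z|$ appearing in the statement reflects using $|1/A|^{2k}\leq|\Im z|^{-2k}$ rather than a sharper concentration bound.

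For (iv), I would decompose $A^\circ(z) = (A - \E_1[A]) + (\E_1[A] - \E[A])$. The second piece equals $b_n^{-1}\widetilde{\g_{n-1}}(z)$ and is independent of $\{w_{1i}\}$, so its contribution to $b_n\E_1[A^\circ(z_1)A^\circ(z_2)]$ is exactly the last summand $b_n^{-1}\widetilde{\g_{n-1}}(z_1)\widetilde{\g_{n-1}}(z_2)$. The first piece splits as
\Bea
A - \E_1[A] = -\f{w_{11}}{\sqrt{b_n}} + \f{1}{b_n}\sum_{\stackrel{i\neq j}{i,j\in I_1}}G_{ij}^{(1)}w_{1i}w_{1j} + \f{1}{b_n}\sum_{i\in I_1}G_{ii}^{(1)}(w_{1i}^2-1),
\Eea
three families that are mutually orthogonal under $\E_1$ by independence of the $\{w_{1i}\}$. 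The $w_{11}$ family contributes $\s^2/b_n$ to $b_n\E_1[\cdot]$; the off-diagonal family contributes $2b_n^{-1}\sum_{i\neq j}G_{ij}^{(1)}(z_1)G_{ij}^{(1)}(z_2)$ via $\E[w_{1i}w_{1j}w_{1k}w_{1l}] = \delta_{ik}\delta_{jl}+\delta_{il}\delta_{jk}$ for distinct $i,j$ and distinct $k,l$; and the centred-diagonal family contributes $(\mu_4-1)b_n^{-1}\sum_i G_{ii}^{(1)}(z_1)G_{ii}^{(1)}(z_2)$ via $\E[(w_{1i}^2-1)^2] = \mu_4-1$. The cumulant identity $\mu_4 - 1 = \kappa_4 + 2$ then absorbs the missing diagonal back into the off-diagonal sum (yielding a full $\sum_{i,j\in I_1}$) and leaves a residual $\kappa_4 b_n^{-1}\sum_i G_{ii}^{(1)}(z_1)G_{ii}^{(1)}(z_2)$, producing the form stated in (iv).

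Finally, (v) is immediate from the resolvent identity $\frac{d}{dz_2}G^{(1)}(z_2) = (G^{(1)}(z_2))^2$, which gives $\frac{d}{dz_2}A(z_2) = 1 + B(z_2)$ and therefore $\frac{d}{dz_2}A^\circ(z_2) = B^\circ(z_2)$ once the constants $1 + \E[B(z_2)]$ cancel between $A$ and $\E[A]$; the interchange of $\E_1$ and $d/dz_2$ is justified uniformly on $|\Im z|\geq \eta$ by the deterministic bound $\|G^{(1)}\| \leq \eta^{-1}$. The main obstacle is the bookkeeping in (iv): cleanly isolating the three orthogonal $\E_1$-pieces, evaluating each fourth-moment pairing (including correctly tracking which index ranges allow coincidences), and recognising $\mu_4 - 1 = \kappa_4 + 2$ as precisely the identity that reshapes the off-diagonal and diagonal sums into the form demanded. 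Any miscount there would propagate into a wrong cumulant coefficient in $V(\phi)$, since (iv) is exactly the formula plugged into $D_n(z,z_\mu)$ to produce the $\kappa_4/(16\pi^2)$ term in the variance.
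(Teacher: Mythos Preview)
Your proposal is correct in all five parts, and for (ii) and (v) it coincides with the paper's argument. The differences are in (i), (iii), and (iv), and in each case your route is at least as clean as the paper's.

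For (i), the paper does not invoke block matrix inversion directly; instead it proves the identity $(A^{-1})_{ij}=(A_1^{-1})_{ij}+\frac{(A_1^{-1}\underline{a_1})_i(A_1^{-1}\underline{a_1})_j}{a_{11}-\langle A_1^{-1}\underline{a_1},\underline{a_1}\rangle}$ by a Gaussian-integral representation of $(A^{-1})_{ij}$ for real positive definite $A$, then specializes to $A=M-zI$ for real $z>\|M\|$ and extends by analytic continuation. Your Schur-complement derivation is the standard linear-algebra version of the same identity and avoids the detour through Gaussian integrals and analytic continuation.

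For (iii), the paper uses a two-index removal formula $G_{12}=-G_{22}G_{11}^{(2)}K_{12}^{(12)}$ with $K_{12}^{(12)}=m_{12}-m_{(1)}G^{(12)}m_{(2)}$ (as in Erd\H{o}s--Yau--Yin), then conditions on the first two rows and columns. Your single-index formula $G_{1j}=A(z)^{-1}(G^{(1)}m^{(1)})_j$, which already falls out of the block inversion in (i), is simpler and yields the same $O(b_n^{-k})$ orders with slightly sharper powers of $|\Im z|^{-1}$ (you get $|\Im z|^{-4k}$ for the $2k$-th moment, whereas the paper records $|\Im z|^{-6k}$; only the $b_n$ order is used downstream). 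Your remark about the ``conservative power of $|\Im z|$'' is slightly off: it is the paper's two-step reduction, not the bound $|1/A|\le|\Im z|^{-1}$, that accounts for the extra factors.

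For (iv), the paper expands $A^\circ(z_1)A^\circ(z_2)$ directly into nine cross terms and simplifies; your orthogonal decomposition $A^\circ=A_1^\circ+b_n^{-1}\widetilde{\g_{n-1}}$ under $\E_1$, followed by the three-family split of $A_1^\circ$, is exactly the same computation organized more transparently, and your recombination via $\mu_4-1=\kappa_4+2$ is precisely what the paper arrives at after collecting its cross terms.
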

\begin{lem} \label{main lemma with poincare condition}
Let $M$ be an $n\times n$ symmetric band matrix as defined in \eqref{defn: random band matrix} which satisfies \eqref{eqn: moment condition}. Also assume that the probability distribution of $w_{jk}$ satisfies the Poincar\'e inequality with some uniform constant $m$ which does not depend on $n,j,k$. Then
\begin{enumerate}[(i)]
\item \begin{equation}\label{eqn: variance of sum of Gii is small}
\text{Var}\Big(\sum_{(1,i)\in I_{n}}G_{ii}\Big)=O(1)\ \mbox{ and } \ \text{Var}(G_{11}(z))=O\left(\f{1}{b_{n}}\right).\end{equation}
\item
\bea
\E\left[\left|A^{\circ}\right|^{4}\right]=O\left(\f{1}{b_{n}^{2}}\right), & &\E\left[\left|A^{\circ}\right|^{3}\right]=O\left(\f{1}{b_{n}^{3/2}}\right)\label{eqn: estimate of A^4}\\
\E\left[\left|B^{\circ}\right|^{4}\right]&=&O\left(\f{1}{b_{n}^{2}}\right)\label{eqn: estimate of B^4}
\eea
\item \begin{equation}\label{eqn: bound on variance of b_nA(z_1)A(z_2)}
\text{Var}\left\{b_{n}\E_{1}\left[A^{\circ}(z_{1})A^{\circ}(z_{2})\right]\right\}=O\left(\f{1}{b_{n}}\right) \mbox{ and } \text{Var}\left\{b_{n}\E_{1}\left[A^{\circ}(z_{1})B^{\circ}(z_{2})\right]\right\}=O\left(\f{1}{b_{n}}\right)
\end{equation}
\item  \bea
\E\left[\left|{\g_{n-1}^{\circ}}(z)-\g_{n}^{\circ}(z)\right|^{4}\right]=O\left(\f{1}{b_{n}^{2}}\right) \ \mbox{ and }\ \E\left[\left|\g_{n}^{\circ}\right|^{4}\right]=O\left(\frac{n^{2}}{b_{n}^{2}}\right).\label{eqn: fourth moment of g_n^1-g_n}
\eea
\item \begin{equation}\label{eqn: asymptotic of f_n(z)}
\frac1n \E \left[Tr G(z)\right]=f(z)+O\left(\frac{1}{|\Im z|^6b_n}\right)
\mbox{ where } f(z)=\f{1}{4}\left(-z+\sqrt{z^{2}-8}\right).
\end{equation}
\item \begin{equation}\label{eqn: limit of A(z),B(z)}
(\E[A(z)])^{-1}=- f(z)+O(b_n^{-1}) \ \mbox{ and }\ \E[B(z)]= 2f'(z)+O(b_n^{-1}).
\end{equation}

\end{enumerate}
\end{lem}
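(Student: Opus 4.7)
The unifying strategy is to apply the Poincar\'e inequality
$$\text{Var}(f)\le \frac{1}{m}\sum_{(j,k)\in I_n^+}\E|\p_{w_{jk}}f|^2+\frac{1}{m}\sum_{j}\E|\p_{w_{jj}}f|^2$$
to each quantity, combined with the resolvent derivative identity $\p_{w_{jk}}G=-\frac{1}{\sqrt{b_n}}(Ge_je_k^*G+Ge_ke_j^*G)$, and the three recurring bounds $\|G\|\le 1/|\Im z|$, $\sum_i|G_{ij}|^2\le 1/|\Im z|^2$, and $\|GPG\|_F\le \|G\|^2\|P\|_F$ for an orthogonal projection $P$. Since the Poincar\'e inequality implies bounded moments of every order for each $w_{jk}$ (with constants independent of $n,j,k$), this furnishes all the moment inputs we need from the row vector $m^{(1)}$.

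For part (i), let $P$ be the diagonal projection onto $I_1$ and $S=\sum_{i\in I_1}G_{ii}$. Then $\p_{w_{jk}}S=-\frac{2}{\sqrt{b_n}}(GPG)_{jk}$ for $j<k$, so
$$\sum_{(j,k)\in I_n^+}|\p_{w_{jk}}S|^2\le \tfrac{4}{b_n}\|GPG\|_F^2\le \tfrac{4}{b_n}\|G\|^4|I_1|=O(1),$$
and summing the diagonal contribution is analogous. For $\text{Var}(G_{11})$ the same identity with $S$ replaced by $G_{11}$ gives $\sum|\p_{w_{jk}}G_{11}|^2\le \frac{4}{b_n}\bigl(\sum_j|G_{1j}|^2\bigr)^2=O(1/b_n)$. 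For part (ii), I would decompose $A^\circ$ as in \eqref{definition of A} into $-w_{11}/\sqrt{b_n}$, the off-diagonal quadratic form $\frac{1}{b_n}\sum_{i\neq j\in I_1}G_{ij}^{(1)}w_{1i}w_{1j}$, the diagonal quadratic $\frac{1}{b_n}\sum_{i\in I_1}G_{ii}^{(1)}(w_{1i}^2-1)$, and the fluctuation $\frac{1}{b_n}\sum_{i\in I_1}(G_{ii}^{(1)})^\circ$. Conditioning on $G^{(1)}$ and expanding, the first is $O_{L^4}(b_n^{-1/2})$ by Poincar\'e; the fourth moment of the two quadratic forms is handled by the standard combinatorial expansion, which yields $O(b_n^{-2})$ using $\sum|G_{ij}^{(1)}|^2\le O(b_n/|\Im z|^2)$; the last term is $O_{L^4}(b_n^{-1})$ by part (i) combined with Poincar\'e concentration. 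Minkowski's inequality assembles these into $\E|A^\circ|^4=O(b_n^{-2})$, and $\E|A^\circ|^3$ is obtained by H\"older. The bound $\E|B^\circ|^4$ is analogous with $G^{(1)}$ replaced by $(G^{(1)})^2$, gaining an extra $|\Im z|^{-2}$.

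For part (iii), the explicit formula \eqref{eqn: b_nA(z_1)A(z_2)} writes $b_n\E_1[A^\circ(z_1)A^\circ(z_2)]$ as a constant plus three functionals of $G^{(1)}$ alone. Since these no longer depend on row~$1$, I apply the Poincar\'e inequality to the remaining independent entries; the derivatives $\p_{w_{jk}}G^{(1)}_{pq}$ are again bilinear in $G^{(1)}$ entries, and repeating the $\|G^{(1)}PG^{(1)}\|_F$ estimate yields variance $O(1/b_n)$ for each summand, hence for the total. The estimate for $b_n\E_1[A^\circ B^\circ]$ then follows from the identity \eqref{A(z_1)B(z_2)}. For part (iv), the identity $\g_{n-1}-\g_n=(1+B)/A$ from \eqref{eqn: trace of difference of resolvents interms of A and B} reduces $\g_{n-1}^\circ-\g_n^\circ$ to centered versions of $1/A$ and $B/A$; expanding $1/A$ around $1/\E[A]$ as in \eqref{eqn: expansion of A^{-1}} and using part (ii) gives $\E|\g_{n-1}^\circ-\g_n^\circ|^4=O(b_n^{-2})$. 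The bound $\E|\g_n^\circ|^4=O(n^2/b_n^2)$ follows from Poincar\'e applied directly to $\g_n$: $\p_{w_{jk}}\g_n=-\frac{2}{\sqrt{b_n}}(G^2)_{jk}$, so $\sum|\p\g_n|^2\le \frac{4}{b_n}\|G^2\|_F^2=O(n/b_n)$, and the fourth moment follows from the exponential concentration implied by the Poincar\'e inequality (or a bootstrap argument).

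Parts (v) and (vi) are the semicircle-style self-consistent equation. Writing $\E[G_{11}]=-\E[1/A]$, expanding $1/A=1/\E[A]-A^\circ/\E[A]^2+\cdots$, and using $\E|A^\circ|^2=O(1/b_n)$ from part (ii), one gets $\E[G_{11}]=-1/\E[A]+O(1/b_n)$, while $\E[A]=z+\frac{1}{b_n}\sum_{i\in I_1}\E[G_{ii}^{(1)}]$; since $|I_1|=2b_n$ and each $\E[G_{ii}^{(1)}]$ differs from $\E[G_{11}]$ by $O(1/b_n)$ via Lemma \ref{main lemma without poincare condition}(ii), this yields the perturbed fixed-point equation $2f_n^2+zf_n+1=O(|\Im z|^{-6}/b_n)$ with $f_n=\E[G_{11}]$, whose unique solution in the upper half-plane converges to $f(z)=(-z+\sqrt{z^2-8})/4$ with the stated error. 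Part (vi) is then a direct consequence: $1/\E[A]\to -f$ from the fixed point, and $\E[B]=\frac{1}{b_n}\sum_{i\in I_1}\E[(G^{(1)})^2_{ii}]=\frac{d}{dz}\bigl(\frac{1}{b_n}\sum_{i\in I_1}\E[G_{ii}^{(1)}]\bigr)\to 2f'(z)$. The main technical obstacle will be carrying the fourth-moment estimates of part (ii) through the expansions while keeping the $z$-dependence uniform on $\{\Im z\ge\eta\}$; the bilinear forms in $m^{(1)}$ with random $G^{(1)}$-coefficients require careful combinatorial bookkeeping and use the sparsity $|I_1|=O(b_n)$ in an essential way.
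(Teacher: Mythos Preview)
Your proposal is correct and for parts (i)--(iii) and (vi) follows essentially the same route as the paper (Poincar\'e inequality plus the rank/Frobenius bound $\|GPG\|_F^2\le\|G\|^4\,\mathrm{rank}(P)$, then the decomposition of $A^\circ$ and the explicit formula \eqref{eqn: b_nA(z_1)A(z_2)}).

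There are two genuine methodological differences worth recording. For the second half of part (iv), the paper does \emph{not} apply Poincar\'e concentration directly to $\gamma_n$; instead it uses the Burkholder-type martingale inequality $\E|\gamma_n^\circ|^4\le Cn\sum_k\E|\gamma_n-\E_k\gamma_n|^4$ and then bounds each summand via the trace-difference identity \eqref{eqn: trace of difference of resolvents interms of A and B} together with the $L^4$ bounds on $A_1^\circ,B_1^\circ$ already established. Your direct Poincar\'e route is shorter and legitimate, since $\|\nabla\gamma_n\|_2^2\le\frac{4}{b_n}\|G^2\|_F^2\le\frac{4n}{b_n|\Im z|^4}$ is a deterministic bound and the $L^4$ concentration (e.g.\ \cite[Lemma~4.4.3]{anderson2010introduction}) then gives $\E|\gamma_n^\circ|^4\le C\|\,\|\nabla\gamma_n\|_2\,\|_\infty^4=O(n^2/b_n^2)$; you should make that citation explicit. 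For part (v), the paper derives the self-consistent equation by writing $zG_{11}=-1+\sum_k m_{1k}G_{k1}$ and applying the cumulant (decoupling) expansion \eqref{decoupling formula} to each $\E[m_{1k}G_{k1}]$, whereas you go through the Schur complement $G_{11}=-1/A$ and Taylor-expand $1/A$ around $1/\E[A]$. Both arrive at $2f_n^2+zf_n+1=O(b_n^{-1})$ with the same stability argument; your version is more elementary and better aligned with how $A$ is used throughout the rest of the lemma, while the paper's cumulant route makes the dependence on higher moments of $w_{jk}$ more transparent.
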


\begin{proof}[Proof of Lemma \ref{main lemma without poincare condition}:]
\noindent\textbf{Proof of (i):}  
\comment{
We know that if $A$ is a positive definite $n\times n$ real symmetric matrix and $h\in\R^{n}$, then we have
\bea
\int\exp\left[-\langle Ax,x\rangle /2+\langle h,x\rangle\right]\;dx&=&\f{(2\pi)^{n}}{\sqrt{\det A}}\exp\left[-\langle A^{-1}h,h\rangle /2\right].\label{eqn: first Gaussian integration}\\
\f{\int x_{i}x_{j}\exp\left[-\langle Ax,x\rangle /2+\langle h,x\rangle\right]\;dx}{\int\exp\left[-\langle Ax,x\rangle /2+\langle h,x\rangle \right]\;dx}&=&(A^{-1})_{ij}+(A^{-1}h)_{i}(A^{-1}h)_{j}.\label{eqn: Second Gaussian integration}
\eea
In particular,
\Bea
\f{\int x_{i}x_{j}\exp[-\langle Ax,x\rangle/2]\;dx}{\int\exp[-\langle Ax,x\rangle /2]\;dx}=(A^{-1})_{ij}.
\Eea 
Applying the above formula for $G$, where $G=(M-zI)^{-1}$, $z\in\R$, $|z|>\|M\|$, we obtain
\Bea
G_{ij}=\f{\int x_{i}x_{j}\exp[-\langle G^{-1}x,x\rangle /2]\;dx}{\int\exp[-\langle G^{-1}x,x\rangle /2]\;dx}.
\Eea
Let us take $i,j\geq 2$ and define $h_{i}=(G^{-1})_{1i}x_{1}=m^{(1)}x_{1}$, for $2\leq i\leq n$, where $m^{(1)}$ is same as in \eqref{definition of m1}. Performing the integration with respect to all variables but $x_{1}$, and applying \eqref{eqn: Second Gaussian integration} we have
\Bea
G_{ij}&=&\f{\int \left[G^{(1)}_{ij}+x_{1}^{2}( G^{(1)}m^{(1)})_{i}(G^{(1)}m^{(1)})_{j}\right]\exp\left[-x_{1}\langle G^{(1)}m^{(1)},m^{(1)}\rangle-(w_{11}/\sqrt{b_{n}}-z)x_{1}^{2}/2\right]\;dx_{1} }{\int\exp\left[-x_{1}\langle G^{(1)}m^{(1)},m^{(1)}\rangle-(w_{11}/\sqrt{b_{n}}-z)x_{1}^{2}/2\right]\;dx_{1}}\\
&=&(G^{(1)})_{ij}+( G^{(1)}m^{(1)})_{i}(G^{(1)}m^{(1)})_{j}\f{\int x_{1}^{2} \exp\left[-x_{1}\langle G^{(1)}m^{(1)},m^{(1)}\rangle-(w_{11}/\sqrt{b_{n}}-z)x_{1}^{2}/2\right]\;dx_{1} }{\int \exp\left[-x_{1}\langle G^{(1)}m^{(1)},m^{(1)}\rangle-(w_{11}/\sqrt{b_{n}}-z)x_{1}^{2}/2\right]\;dx_{1} }\\
&=&(G^{(1)})_{ij}+\f{( G^{(1)}m^{(1)})_{i}(G^{(1)}m^{(1)})_{j}}{w_{11}/\sqrt{b_{n}}-z-\langle G^{(1)}m^{(1)},m^{(1)}\rangle},
\Eea
where $G^{(1)}$ is same as defined in \eqref{definition of G1}. From the above formula we  obtain
\Bea
G_{ii}-G_{ii}^{(1)}=-\f{(G^{(1)}m^{(1)})_{i}^{2}}{A(z)},\;\;\;\forall\; 2\leq i\leq n.
\Eea
The above is true for all $z\in\R$ such that $|z|>\|M\|$. By analytic continuity we can extend it to the whole complex plane.
}
Suppose $(X_1,X_2,\ldots,X_n)$ is  a $n$ dimensional normal random vector with a positive definite covariance matrix $A^{-1}$ and a mean $A^{-1}\underline{h}$, where $\underline{h}\in \mathbb R^n$. Then we have
\bea
\int\exp\left[-\frac12 \langle A\underline{x},\underline{x}\rangle +\langle \underline{h},\underline{x}\rangle\right]\;d\underline{x}&=&(2\pi)^{n/2}|\det A|^{-1/2}\exp\left[\frac 12\langle A^{-1}\underline{h},\underline{h}\rangle \right],\label{eqn: first Gaussian integration}\\
\f{\int x_{i}x_{j}\exp\left[-\frac 12 \langle A\underline{x},\underline{x}\rangle +\langle \underline{h},\underline{x}\rangle\right]\;d\underline{x}}{\int\exp\left[-\frac 12\langle A\underline{x},\underline{x}\rangle +\langle \underline{h},\underline{x}\rangle \right]\;d\underline{x}}&=&(A^{-1})_{ij}+(A^{-1}\underline{h})_{i}(A^{-1}\underline{h})_{j}.\label{eqn: Second Gaussian integration}
\eea
where $\underline{x}=(x_1,x_2,\ldots,x_n)^{T}$. In particular, for $\underline{h}=0$,
\bea\label{eqn: third Gaussian integration}
(A^{-1})_{ij}=\f{\int x_{i}x_{j}\exp[-\frac 12\langle A\underline{x},\underline{x}\rangle]\;d\underline{x}}{\int\exp[-\frac 12\langle A\underline{x},\underline{x}\rangle ]\;d\underline{x}}.
\eea 
Now doing the integrations in \eqref{eqn: third Gaussian integration}  with respect to all variables except  $x_1$, and using \eqref{eqn: first Gaussian integration} we get
\Bea\int\exp[-\frac 12\langle A\underline{x},\underline{x}\rangle ]\;d\underline{x}&=&\int\exp[-\frac{a_{11}x_1^2}{2}]\int \exp[-\frac 12 \langle A_1\underline{x}^{(1)},\underline{x}^{(1)}\rangle -\langle x_1\underline{a_1},\underline{x}^{(1)}\rangle  ]\ d\underline{x}\\
&=&\frac{(2\pi)^{\frac{n-1}{2}}}{|\det A_1|^{1/2}}\int \exp[-\frac{x_1^2}{2}(a_{11}-\langle A_1^{-1}\underline{a_1},\underline{a_1}\rangle)]\ dx_1
\Eea
where $\underline{x}^{(1)}=(x_2,x_3,\ldots,x_n)^T$,  $\underline{a_1}=(a_{12},a_{13},\ldots,a_{1n})^T$ and $A_1=((A_{1})_{ij})_{i,j=2}^{n}$ is the $(n-1)\times (n-1)$ matrix obtained from $A$ after removing first row and first column, and for $i,j\neq 1$, using \eqref{eqn: Second Gaussian integration} and \eqref{eqn: first Gaussian integration} we get
\Bea
&&\int x_{i}x_{j}\exp[-\frac 12\langle A\underline{x},\underline{x}\rangle]\;d\underline{x}\\
&=&\int\exp[-\frac{a_{11}x_1^2}{2}]\int x_ix_j \exp[-\frac 12 \langle A_1\underline{x}^{(1)},\underline{x}^{(1)}\rangle -\langle x_1\underline{a_1},\underline{x}^{(1)}\rangle  ]\ d\underline{x}^{(1)}dx_1\\
&=&\int\exp[-\frac{a_{11}x_1^2}{2}]\  [(A_1^{-1})_{ij}+x_1^2 ( A_1^{-1}\underline{a_1})_i (A_1^{-1}\underline{a_1})_j]\int \exp[-\frac 12 \langle A_1\underline{x}^{(1)},\underline{x}^{(1)}\rangle -\langle x_1\underline{a_1},\underline{x}^{(1)}\rangle  ]\ d\underline{x}^{(1)}dx_1\\
&=&\frac{(2\pi)^{\frac{n-1}{2}}}{|\det A_1|^{1/2}}\int [(A_1^{-1})_{ij}+x_1^2 ( A_1^{-1}\underline{a_1})_i (A_1^{-1}\underline{a_1})_j]\ \exp[-\frac{x_1^2}{2}(a_{11}-\langle A_1^{-1}\underline{a_1},\underline{a_1}\rangle)]\ dx_1.
\Eea
Therefore, from  \eqref{eqn: third Gaussian integration} we get
\Bea
(A^{-1})_{ij}&=&(A_1^{-1})_{ij}+( A_1^{-1}\underline{a_1})_i (A_1^{-1}\underline{a_1})_j \ \frac{\int x_1^2 \exp[-\frac{x_1^2}{2}(a_{11}-\langle A_1^{-1}\underline{a_1},\underline{a_1}\rangle)]\ dx_1}{\int \exp[-\frac{x_1^2}{2}(a_{11}-\langle A_1^{-1}\underline{a_1},\underline{a_1}\rangle)]\ dx_1}\\\nonumber
&=&(A_1^{-1})_{ij}+\frac{( A_1^{-1}\underline{a_1})_i (A_1^{-1}\underline{a_1})_j}{a_{11}-\langle A_1^{-1}\underline{a_1},\underline{a_1}\rangle}.
\Eea
Applying the above formula for $A=(M-zI)$, where  $z\in\R$, $|z|>\|M\|$, we obtain
$$G_{ij}=G^{(1)}_{ij}+ \frac{( G^{(1)} m^{(1)})_i (G^{(1)}m^{(1)})_j}{\frac{w_{11}}{\sqrt{b_n}}-z-\langle G^{(1)}m^{(1)},m^{(1)}\rangle},\;\;\;i,j\geq 2,$$
where $m^{(1)}$, $G^{(1)}$ are  as defined in \eqref{definition of m1}, \eqref{definition of G1} respectively. From the above formula we  obtain
\Bea
G_{ii}-G_{ii}^{(1)}=-\f{(G^{(1)}m^{(1)})_{i}^{2}}{A(z)},\;\;\;\mbox{for all}\;\  2\leq i\leq n,
\Eea
where $A(z)$ is as defined in \eqref{definition of A}. The above is true for all $z\in\R$ such that $|z|>\|M\|$. By analytic continuity one can extend it to the whole complex plane. This completes the proof.\\

\noindent
\textbf{Proof of (ii):} Recall $I_{1}=\{1<i\leq n:(1,i)\in I_n\}$. Now using \eqref{difference between the traces of G and G^1} and \eqref{eqn: norm of G is bounded by Imz} we have
\Bea
\left|\E\left[G_{ii}^{(1)}(z)\right]-\E[G_{ii}(z)]\right|&=&\left|\E\left[\f{1}{A}\left(G^{(1)}m^{(1)}\right)_{i}^{2}\right]\right|\\
&=&\left|\E\left[\f{1}{A}\left(\f{1}{\sqrt{b_{n}}}\sum_{j\in I_{1}}G_{ij}^{(1)}w_{1j}\right)^{2}\right]\right|\\
&\leq&\f{1}{b_{n}}\f{1}{|\Im z|}\E\left[\left|\sum_{j\in I_{1}}G_{ij}^{(1)}w_{1j}\right|^{2}\right]\\
&\leq&\f{1}{b_{n}|\Im z|}\E\left[\sum_{j\in I_{1}}|G_{ij}^{(1)}|^{2}w_{1j}^{2}+\sum_{j_{1}\neq j_{2}\in I_1}G_{ij_{1}}^{(1)}\overline{ G_{ij_{2}}^{(1)}}w_{1j_{1}}w_{1j_{2}}\right]\\
&=&\f{1}{b_{n}|\Im z|}\E\E_{1}\left[\sum_{j\in I_{1}}|G_{ij}^{(1)}|^{2}w_{1j}^{2}+\sum_{j_{1}\neq j_{2}\in I_1}G_{ij_{1}}^{(1)}\overline{G_{ij_{2}}^{(1)}}w_{1j_{1}}w_{1j_{2}}\right]\\
&=&\f{1}{b_{n}|\Im z|}\E\left[\sum_{j\in I_{1}}|G_{1j}^{(1)}|^{2}\right]\\
&\leq&\f{1}{b_{n}|\Im z|}\E\|G^{(1)}\|^{2}\leq\f{1}{b_{n}|\Im z|^{3}}.
\Eea

\noindent
\textbf{Proof of (iii):}
Using the resolvent formula given in ~\cite{erdHos2011universality}, we have
\Bea
G_{12}=-G_{22}G_{11}^{(2)}K_{12}^{(12)},
\Eea
where $G^{(2)}$ is the resolvent of the $(n-1)\times(n-1)$ minor obtained by removing the $k$th row and $k$th column from the matrix $M$, $K_{12}^{(12)}=m_{12}-m_{(1)}G^{(12)}m_{(2)}$, $m_{(1)}=\f{1}{\sqrt{b_{n}}}(w_{13},w_{14},\ldots,w_{1n})$, $m_{(2)}=\f{1}{\sqrt{b_{n}}}(w_{23},w_{24},\ldots,w_{2n})^T$, $G^{(ij)}=\left(M^{(ij)}-zI\right)^{-1}$, and $M^{(ij)}$ is $(n-2)\times (n-2)$ matrix obtained from $M$ after removing  $i$th and $j$th rows and columns. Therefore,
\Bea
\E[|G_{12}|^{2}]&=&\E\left[\left|G_{22}G_{11}^{(2)}K_{12}^{(12)}\right|^{2}\right]\\
&\leq&\f{1}{|\Im z|^{2}}\f{1}{|\Im z|^{2}}\E\left[\left|m_{12}-m_{(1)}G^{(12)}m_{(2)}\right|^{2}\right]\\
&=&\f{1}{|\Im z|^{4}}\E\left[\left|\f{w_{12}}{\sqrt{b_{n}}}-\f{1}{b_{n}}\sum_{\stackrel{(1,i),(2,j)\in I_{n}}{i,j\neq 1,2}}G_{ij}^{(12)}w_{1i}w_{2j}\right|^{2}\right]\\
&\leq&\f{1}{|\Im z|^{4}}\E\E_{\leq 2}\left[\f{w_{12}^{2}}{b_{n}}+\f{1}{b_{n}^{2}}\sum_{\stackrel{(1,i), (2,j)\in I_{n}}{i,j\neq 1,2}}|G_{ij}^{(12)}|^{2}w_{1i}^{2}w_{2j}^{2}\right]\\
&\leq&\f{1}{|\Im z|^{4}}\E\left[\f{1}{b_{n}}+\f{1}{b_{n}^{2}}\sum_{i,j}|G_{ij}^{(12)}|^{2}\E_{\leq 2}[w_{1i}^{2}]\E_{\leq 2}[w_{2j}^{2}]\right]\\
&\leq&\f{1}{|\Im z|^{4}}\E\left[\f{1}{b_{n}}+\f{1}{b_{n}^{2}}\f{b_{n}}{|\Im z|^{2}}\right]=O\left(\f{1}{b_{n}}\right)\f{1}{|\Im z|^{6}},
\Eea
where $E_{\leq 2}$ is the averaging with respect to the first two rows and columns. Similarly, we can prove that $\E[|G_{12}|^{4}]=O\left(\f{1}{b_{n}^{2}}\right)\f{1}{|\Im z|^{12}}$, and $\E[|G_{12}|^{8}]=O\left(\f{1}{b_{n}^{4}}\right)\f{1}{|\Im z|^{24}}$.\\

\noindent\textbf{Proof of (iv):}
We know that
\Bea
A(z_{1})&=&z_{1}-\f{w_{11}}{\sqrt{b_{n}}}+\left\langle G^{(1)}m^{(1)},m^{(1)}\right\rangle,\\
\text{and }\;\;A^{\circ}(z_{1})&=&-\f{w_{11}}{\sqrt{b_{n}}}+\f{1}{b_{n}}\sum_{i\neq j\in I_{1}}G_{ij}^{(1)}w_{1i}w_{1j}+\f{1}{b_{n}}\sum_{i\in I_{1}}G_{ii}^{(1)}w_{1i}^{2}-\f{1}{b_{n}}\sum_{i\in I_{1}}\E[G_{ii}^{(1)}].
\Eea
Now we can estimate
\Bea
&&b_{n}\E_{1}\left[A^{\circ}(z_{1})A^{\circ}(z_{2})\right]\\
&=&\s^{2}+\f{1}{b_{n}}\E_{1}\left[\sum_{\stackrel{i_{1}\neq j_{1}\in I_{1}}{i_{2}\neq j_{2}\in I_{1}}}G_{i_{1}j_{1}}^{(1)}(z_{1})w_{1i_{1}}w_{1j_{1}}G_{i_{2}j_{2}}^{(1)}(z_{2})w_{1i_{2}}w_{1j_{2}}\right]+\f{1}{b_{n}}\E_{1}\left[\sum_{i,j\in I_{1}}G_{ii}^{(1)}(z_{1})G_{jj}^{(1)}(z_{2})w_{1i}^{2}w_{1j}^{2}\right]\\
&&-\f{1}{b_{n}}\E\left[\sum_{i\in I_{1}}G_{ii}^{(1)}(z_{2})\right]\E_{1}\left[\sum_{i\in I_{1}}G_{ii}^{(1)}(z_{1})w_{1i}^{2}\right]-\f{1}{b_{n}}\E\left[\sum_{i\in I_{1}}G_{ii}^{(1)}(z_{1})\right]\E_{1}\left[\sum_{i\in I_{1}}G_{ii}^{(1)}(z_{2})w_{1i}^{2}\right]\\
&&+\f{1}{b_{n}}\E\left[\sum_{i\in I_{1}}G_{ii}^{(1)}(z_{1})\right]\E\left[\sum_{i\in I_{1}}G_{ii}^{(1)}(z_{2})\right]\\
&=&\s^{2}+\f{2}{b_{n}}\sum_{i\neq j\in I_{1}}G_{ij}^{(1)}(z_{1})G_{ij}^{(1)}(z_{2})+\f{1}{b_{n}}\sum_{i\neq j\in I_{1}}G_{ii}^{(1)}(z_{1})G_{jj}^{(1)}(z_{2})
+\f{\mu_{4}}{b_{n}}\sum_{i\in I_{1}}G_{ii}^{(1)}(z_{1})G_{ii}^{(1)}(z_{2})\\
&&+\f{1}{b_{n}}\widetilde{\g_{n-1}}(z_{1})\widetilde{\g_{n-1}}(z_{2})-\f{1}{b_{n}}\left(\sum_{i\in I_{1}}G_{ii}^{(1)}(z_{1})\right)\left(\sum_{i\in I_{1}}G_{ii}^{(1)}(z_{2})\right)\\
&=&\s^{2}+\f{2}{b_{n}}\sum_{i, j\in I_{1}}G_{ij}^{(1)}(z_{1})G_{ij}^{(1)}(z_{2})+\f{\mu_{4}}{b_{n}}\sum_{i\in I_{1}}G_{ii}^{(1)}(z_{1})G_{ii}^{(1)}(z_{2})-\f{3}{b_{n}}\sum_{i\in I_{1}}G_{ii}^{(1)}(z_{1})G_{ii}^{(1)}(z_{2})\\
&&+\f{1}{b_{n}}\widetilde{\g_{n-1}}(z_{1})\widetilde{\g_{n-1}}(z_{2})\\
&=&\s^{2}+\f{2}{b_{n}}\sum_{i, j\in I_{1}}G_{ij}^{(1)}(z_{1})G_{ij}^{(1)}(z_{2})+\f{\kappa_{4}}{b_{n}}\sum_{i\in I_{1}}G_{ii}^{(1)}(z_{1})G_{ii}^{(1)}(z_{2})+\f{1}{b_{n}}\widetilde{\g_{n-1}}(z_{1})\widetilde{\g_{n-1}}(z_{2}),
\Eea
where $\kappa_{4}=\mu_{4}-3.$\\

\noindent\textbf{Proof of (v):}
Observe that
\Bea
B(z_{2})=\left\langle G^{(1)}G^{(1)}m^{(1)},m^{(1)}\right\rangle
=\f{1}{b_{n}}\sum_{i,j\in I_{1}}\left(G^{(1)}G^{(1)}\right)_{ij}w_{1i}w_{1j}
=\f{1}{b_{n}}\sum_{i,j\in I_{1}}\sum_{k=2}^{n}G_{ik}^{(1)}G_{kj}^{(1)}w_{1i}w_{1j},
\Eea
and
\Bea
\f{d}{dz_{2}}G_{ij}^{(1)}(z_{2})=\left( G^{(1)}(z_{2})G^{(1)}(z_{2})\right)_{ij}=\sum_{k=2}^{n}G_{ik}^{(1)}(z_{2})G_{kj}^{(1)}(z_{2}).
\Eea
Now,
proceed as in \textbf{(iv)} and use the above facts to prove the result. Here we skip the details.

\end{proof}

\begin{proof}[Proof of Lemma \ref{main lemma with poincare condition}:]
\textbf{Proof of (i):} Since $w_{jk}$ satisfies the Poincar\'e inequality with constant $m$ and the Poincar\'e  inequality tensorises, the joint distribution of $\{w_{jk}\}_{(j,k)\in I_n^+}$ on  $\mathbb R^{n(b_n+1)} $ satisfies the Poincar\'e inequality with same constant $m$. Therefore we have
\Bea
\text{Var}\left(\Phi\left(\{w_{jk}\}_{(j,k)\in I_{n}^{+}}\right)\right)\leq\f{1}{m}\sum_{(j,k)\in I_{n}^{+}}\E\left[\left|\f{\partial\Phi}{\partial w_{jk}}\right|^{2}\right],
\Eea
for any continuously differentiable function $\Phi$. Therefore,

\bea\label{eqn: estimation of gradient of gamma tilde n-1}
\text{Var}\left(\sum_{(1,i)\in I_{n}}G_{ii}\right)&\leq&\f{1}{m}\sum_{(j,k)\in I_{n}^{+}}\E\left[\left|\f{\partial}{\partial w_{jk}}\sum_{(1,i)\in I_{n}}G_{ii}\right|^{2}\right]\\
&\leq&\f{4}{mb_{n}}\sum_{(j,k)\in I_{n}^{+}}\E\left[\left|\sum_{(1,i)\in I_{n}}G_{ij}G_{ki}\right|^{2}\right]\nonumber\\
&=&\f{4}{mb_{n}}\sum_{(j,k)\in I_{n}^{+}}\E\left[|\alpha_{kj}|^{2}\right]\;\;\;\text{where $\alpha_{kj}=\sum_{(1,i)\in I_{n}}G_{ki}G_{ij}$}\nonumber\\
&\leq&\f{4}{mb_{n}}\sum_{j,k=1}^{n}\E\left[|\alpha_{kj}|^{2}\right]\nonumber\\
&=&\f{4}{mb_{n}}\E\left[\|VV^{T}\|_{Fb}^{2}\right]\nonumber\\
&=&\f{4}{mb_{n}}\E\left[\sum_{i=1}^{n}|\beta_{i}|^{2}\right],\nonumber
\eea
where
$$
V=\left[\begin{array}{lllllll}
G_{11}&G_{12}&\cdots&G_{1k_n}&0&\cdots&0\\
G_{21}&G_{22}&\cdots&G_{2k_n}&0&\cdots&0\\
& &&\vdots&&&\\
G_{n1}&G_{n2}&\cdots&G_{nk_n}&0&\cdots&0
\end{array}
\right]_{n\times n}
$$

and $\|\cdot\|_{Fb}$ stands for the Frobenius norm, and $\beta_{i}$s are the eigenvalues of $VV^{T}$. Here, we denote the set $\{i:(1,i)\in I_n\}$ by $\{1,2,\ldots,k_n\}$. Observe that $k_n=2b_n+1$. Since $rank(VV^{T})\leq k_{n}=O(b_n)$, we have $\#\{i:\beta_{i}\neq 0\}\leq k_n=O(b_{n})$. Also we know that $\|V\|\leq \|G\|$. Therefore,
\Bea
|\beta_{i}|^{2}\leq\|VV^{T}\|^{2}\leq\|G\|^{4}\leq\f{1}{|\Im z|^{4}}.
\Eea
Consequently, we have
\bea\label{final result var sum G_{ii}}
\text{Var}\left(\sum_{(1,i)\in I_{n}}G_{ii}\right)\leq\f{4}{mb_{n}}\E\left[\sum_{i=1}^{n}|\beta_{i}|^{2}\right]\leq \f{4}{mb_{n}}\f{O(b_{n})}{|\Im z|^{4}}=O(1).
\eea
This completes proof of first part of \eqref{eqn: variance of sum of Gii is small}.

Recall the definition of $A$ from \eqref{definition of A}, $A=z-\f{1}{\sqrt{b_{n}}}w_{11}+\left(G^{(1)}m^{(1)},m^{(1)}\right)$. Then
\Bea
A^{\circ}&=&A-\E[A]\\
&=&-\f{1}{\sqrt{b_{n}}}w_{11}+\f{1}{b_{n}}\sum_{\stackrel{i\neq j}{i,j\in I_{1}}}G_{ij}^{(1)}w_{1i}w_{1j}+\f{1}{b_{n}}\sum_{i\in I_1}\left(G_{ii}^{(1)}w_{1i}^{2}-\E[G_{ii}^{(1)}]\right),
\Eea
Consider
\bea\label{eqn: A_1}
A_{1}^{\circ}&=&A-\E_{1}[A] \nonumber \\
&=&-\f{1}{\sqrt{b_{n}}}w_{11}+\f{1}{b_{n}}\sum_{\stackrel{i\neq j}{i,j\in I_{1}}}G_{ij}^{(1)}w_{1i}w_{1j}+\f{1}{b_{n}}\sum_{i\in I_1}\left(G_{ii}^{(1)}w_{1i}^{2}-G_{ii}^{(1)}\right).
\eea
So we have
\bea\label{eqn: relation A and A_1}
A^{\circ}-A_{1}^{\circ}=\f{1}{b_{n}}\sum_{i\in I_1}\left(G_{ii}^{(1)}-\E\left[G_{ii}^{(1)}\right]\right) =: \f{1}{b_{n}}\widetilde{\g_{n-1}}.
\eea
Hence
\Bea
\E[|A^{\circ}|^{2}]=\E[|A_{1}^{\circ}+b_{n}^{-1}\widetilde{\g_{n-1}}|^{2}]
\leq 2\left[\E[|A_{1}^{\circ}|^{2}]+\f{1}{b_{n}^{2}}\E[|\widetilde{\g_{n-1}}|^{2}]\right].
\Eea
From \eqref{estimate of A1 as O(1/b)}, we know that $\E[|A_{1}^{\circ}|^{2}]=O\left(\f{1}{b_{n}}\right)$ and from \eqref{final result var sum G_{ii}}, We have $\E[|\widetilde{\g_{n-1}}|^{2}]=O(1)$. Combining these two facts and using \eqref{eqn: schur complement formula}, we have
$$\text{Var}(G_{11}(z))=\E\left|\f{1}{A}-\E\f{1}{A}\right|^{2}\leq\E\left|\f{1}{A}-\f{1}{\E A}\right|^{2}=\E\left|\f{A^{\circ}}{A\E A}\right|^{2}=O\left(\f{1}{b_{n}}\right).$$
This completes the proof of second part.\\

\noindent
\textbf{Proof of (ii):}
\textit{Proof of \eqref{eqn: estimate of A^4}:} Recall from \eqref{eqn: A_1}
\bea
A_{1}^{\circ}=-\f{w_{11}}{\sqrt{b_{n}}}+\f{1}{b_{n}}\sum_{i\neq j\in I_{1}}G_{ij}^{(1)}w_{1i}w_{1j}+\f{1}{b_{n}}\sum_{i\in I_{1}}G_{ii}^{(1)}(w_{1i}^{2})^{\circ}=: T_{1}+T_{2}+T_{3}.\label{eqn: A1 as a sum of T1, T2 and T3}
\eea
We have $\E[|T_{1}|^{4}]=O\left(\f{1}{b_{n}^{2}}\right)$. Now
\Bea
\E\left[\left|T_{2}\right|^{4}\right]=\f{1}{b_{n}^{4}}\E\left[\sum_{i\neq j, k\neq l, p\neq q, s\neq t\in I_{1}}G_{ij}^{(1)}\overline{G_{kl}^{(1)}}G_{pq}^{(1)}\overline{G_{st}^{(1)}}w_{1i}w_{1j}w_{1k}w_{1l}w_{1p}w_{1q}w_{1s}w_{1t}\right].
\Eea

We use the similar technique as the moment method in the proof of the Semicircle Law. In the above sum of expectations, we have nonzero terms if the indices of $w_{1m}$'s match in a certain way. Non zero contribution to $\E[|T_{2}|^{4}]$ come from the two types of matches.
\begin{figure}[h!]
\centering
\includegraphics[scale=0.6]{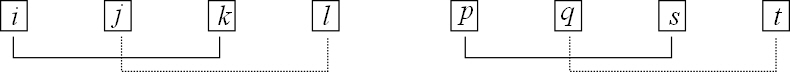}
\caption{Type I matching}
\bigskip
\includegraphics[scale=0.6]{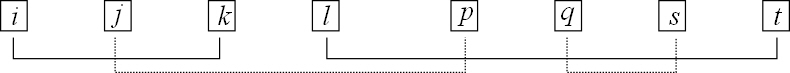}
\caption{Type II matching}
\end{figure}

\noindent
\textbf{Type I:} Contribution from this kind of matching is
\Bea
\f{1}{b_{n}^{4}}\E\left[\sum_{i\neq j,p\neq k\;\in I_{1}}|G_{ij}^{(1)}|^{2}|G_{pq}^{(1)}|^{2}w_{1i}^{2}w_{1j}^{2}w_{1p}^{2}w_{1q}^{2}\right]
&=&\f{1}{b_{n}^{4}}\E\E_{1}\left[\sum_{i\neq j, p\neq k\;\in I_{1}}|G_{ij}^{(1)}|^{2}|G_{pq}^{(1)}|^{2}w_{1i}^{2}w_{1j}^{2}w_{1p}^{2}w_{1q}^{2}\right]\\
&=&\f{1}{b_{n}^{4}}\sum_{i\neq j}\sum_{p\neq q}\E\left[|G_{ij}^{(1)}|^{2}|G_{pq}^{(1)}|^{2}\right]\\
&\leq&\f{1}{b_{n}^{4}}\sum_{i\neq j}\sum_{p\neq q}\sqrt{\E\left[\left|G_{ij}^{(1)}\right|^{4}\right]\E\left[\left|G_{pq}^{(1)}\right|^{4}\right]}\\
&=&\f{1}{b_{n}^{4}}\sum_{i\neq j}\sum_{p\neq q}O\left(\f{1}{b_{n}^{2}}\right)\ \ (\mbox{using \eqref{eqn: estimate of off-diagonal entries}})\\
&=&O\left(\f{1}{b_{n}^{2}}\right).
\Eea
\textbf{Type II:} Similarly, contribution from the type II matching is
\Bea
\f{1}{b_{n}^{4}}\E\left[\sum_{i\neq j}\sum_{q\neq l\in I_{1}}G_{ij}^{(1)}\overline{G_{il}^{(1)}}G_{jq}^{(1)}\overline{G_{ql}^{(1)}}w_{1i}^{2}w_{1j}^{2}w_{1q}^{2}w_{1l}^{2}\right]=O\left(\f{1}{b_{n}^{2}}\right).
\Eea
Similarly, $\displaystyle{\E[|T_{3}|^{4}]}=O\left(\f{1}{b_{n}^{2}}\right)$. Hence
\bea
\E[|A_{1}^{\circ}|^{4}]=O\left(\f{1}{b_{n}^{2}}\right).\label{eqn: estimate of A_1^4}
\eea
Using Lemma 4.4.3. from \cite{anderson2010introduction} with the help of the Poincar\'e inequality, we have $\E\left[\left|\widetilde{\g_{n-1}}\right|^{4}\right]\leq C\|\|\nabla\widetilde{\g_{n-1}}\|_{2}\|_{\infty}^{4}$, where $C$ is a constant depends only on the constant $m$ of the Poincar\'e inequality. Following the arguments given at the  right side of \eqref{eqn: estimation of gradient of gamma tilde n-1} onward and \eqref{final result var sum G_{ii}}, one can show  that $\|\nabla\widetilde{\g_{n-1}}\|_{2}\leq \f{C}{|\Im z|^{4}}$, where $C$ depends only on $m$. Hence $\E\left[\left|\widetilde{\g_{n-1}}\right|^{4}\right]=O(1)$. Consequently, using  relation \eqref{eqn: relation A and A_1} and \eqref{eqn: estimate of A_1^4}, we have  $\E[|A^{\circ}|^{4}]=O\left(\f{1}{b_{n}^{2}}\right)$. Then $\E[|A^{\circ}|^{3}]\leq \left(\E[|A^{\circ}|^{4}]\right)^{3/4}=O\left(\f{1}{b_{n}^{3/2}}\right).$\\\\

%
%

\noindent\textit{Proof of \eqref{eqn: estimate of B^4}:} First we write $B$ as
\Bea
B=\left\langle G^{(1)}G^{(1)}m^{(1)},m^{(1)}\right\rangle=\left\langle H^{(1)}m^{(1)},m^{(1)}\right\rangle =\f{1}{b_{n}}\sum_{i,j\in I_{1}}H_{ij}^{(1)}w_{1i}w_{1j},
\Eea
where $H^{(1)}=G^{(1)}G^{(1)}$. Define
\Bea
B_{1}^{\circ}:=\f{1}{b_{n}}\sum_{i\neq j\in I_{1}}H_{ij}^{(1)}w_{1i}w_{1j}+\f{1}{b_{n}}\sum_{i\in I_{1}}H_{ii}^{(1)}(w_{1i}^{2})^{\circ}.
\Eea
Then we can write
\Bea
B^{\circ}&=&B-\E[B]\\
&=&\f{1}{b_{n}}\sum_{i\neq j\in I_{1}}H_{ij}w_{1i}w_{1j}+\f{1}{b_{n}}\sum_{i\in I_{1}}\left[H_{ii}^{(1)}w_{1i}^{2}-\E[H_{ii}^{(1)}]\right]\\
&=&B_{1}^{\circ}+\f{1}{b_{n}}\sum_{i\in I_{1}}\left(H_{ii}^{(1)}-\E[H_{ii}^{(1)}]\right)\\
&=&B_{1}^{\circ}+\f{1}{b_{n}}\overline{\g_{n-1}},
\Eea
where
\Bea
\overline{\g_{n-1}}(z)=\sum_{i\in I_{1}}\left(H_{ii}^{(1)}-\E[H_{ii}^{(1)}]\right)=\sum_{i\in I_{1}}\sum_{j=2}^{n}\left(G_{ij}^{(1)}G_{ji}^{(1)}-\E\left[G_{ij}^{(1)}G_{ji}^{(1)}\right]\right)=\f{d}{dz}\widetilde{\g_{n-1}}(z).
\Eea

Proceeding as in the estimate of $\E[|A_{1}^{\circ}|^{4}]$, we can show
\bea
\E[|B_{1}^{\circ}|^{4}]=O\left(\f{1}{b_{n}^{2}}\right).\label{eqn: estimate of B_1^4}
\eea
We have shown that $\E[|\widetilde{\g_{n-1}}(z)|^{4}]=O(1)$. Using this fact and Cauchy's theorem we have $\E[|\overline{\g_{n-1}}(z)|^{4}]=O(1)$. Hence we have the result.\\




\noindent\textbf{Proof of (iii):}
\Bea
\text{Var}\left\{b_{n}\E_{1}\left[A^{\circ}(z_{1})A^{\circ}(z_{2})\right]\right\}=\text{Var}(T_{1})+\text{Var}(T_{2})+\text{Var}(T_{3})+2\text{Cov}(T_{1},T_{2})+2\text{Cov}(T_{2},T_{3})+2\text{Cov}(T_{3},T_{1}),
\Eea
where
\Bea
T_{1}=\f{2}{b_{n}}\sum_{i,j\in I_{1}}G_{ij}^{(1)}(z_{1})G_{ij}^{(1)}(z_{2}),\
T_{2}=\f{\kappa_{4}}{b_{n}}\sum_{i\in I_{1}}G_{ii}^{(1)}(z_{1})G_{ii}^{(1)}(z_{2})\ \mbox{and}\
T_{3}=\f{1}{b_{n}}\widetilde{\g_{n-1}}(z_{1})\widetilde{\g_{n-1}}(z_{2}).
\Eea
Now, Var$\displaystyle{(T_{2})=\f{\kappa_{4}^{2}}{b_{n}^{2}}\text{Var}\left\{\sum_{i\in I_{1}}G_{ii}^{(1)}(z_{1})G_{ii}^{(1)}(z_{2})\right\}}$ and
\Bea
\text{Var}\left\{G_{ii}^{(1)}(z_{1})G_{ii}^{(1)}(z_{2})\right\}&=&\E\left|G_{ii}^{(1)}(z_{1})G_{ii}^{(1)}(z_{2})-\E[G_{ii}^{(1)}(z_{1})G_{ii}^{(1)}(z_{2})]\right|^{2}\\
&\leq&\f{2}{|\Im z_{1}|^{2}}\text{Var}\left(G_{ii}^{(1)}(z_{2})\right)+\f{2}{|\Im z_{2}|^{2}}\text{Var}\left(G_{ii}^{(1)}(z_{1})\right)\\
&=&\left(\f{1}{|\Im z_{1}|^{2}}+\f{1}{|\Im z_{2}|^{2}}\right)O\left(\f{1}{b_{n}}\right).
\Eea
Therefore,
\Bea
\text{Var}(T_{2})\leq\f{\kappa_{4}^{2}}{b_{n}^{2}}\left(b_{n}O\left(\f{1}{b_{n}}\right)+b_{n}^{2}O\left(\f{1}{b_{n}}\right)\right)=O\left(\f{1}{b_{n}}\right).
\Eea
Now
\Bea
\text{Var}(T_{3})&\leq&\f{1}{b_{n}^{2}}\text{Var}\left(\widetilde{\g_{n-1}}(z_{1})\widetilde{\g_{n-1}}(z_{2})\right)\\
&\leq&\f{1}{b_{n}^{2}}\E\left[|\widetilde{\g_{n-1}}(z_{1})|^{2}|\widetilde{\g_{n-1}}(z_{2})|^{2}\right]\\
&\leq&\f{1}{b_{n}^{2}}\sqrt{\E\left[|\widetilde{\g_{n-1}}(z_{1})|^{4}\right]}\sqrt{\E\left[|\widetilde{\g_{n-1}}(z_{2})|^{4}\right]}\\
&=&\f{1}{b_{n}^{2}}O(1).
\Eea
Last equality holds, since $\E\left[|\widetilde{\g_{n-1}}(z_{1})|^{4}\right]=O(1)$.  And finally
\Bea
\text{Var}(T_{1})=\f{4}{b_{n}^{2}}\text{Var}\left(\sum_{i,j\in I_{1}}G_{ij}^{(1)}(z_{1})G_{ij}^{(1)}(z_{2})\right).
\Eea
Now using the Poincar\'e inequality
\Bea
&&\text{Var}\left(\sum_{i,j\in I_{1}}G_{ij}^{(1)}(z_{1})G_{ij}^{(1)}(z_{2})\right)\\
&\leq&\f{1}{m}\sum_{(s,t)\in I_{n}^{+}}\E\left[\left|\f{\partial}{\partial w_{st}}\sum_{i,j\in I_{1}}G_{ij}^{(1)}(z_{1})G_{ij}^{(1)}(z_{2})\right|^{2}\right]\\
&\leq&\f{1}{mb_{n}}\sum_{(s,t)\in I_{n}^{+}}\E\left[\left|\sum_{i,j\in I_{1}}G_{is}^{(1)}(z_{1})G_{tj}^{(1)}(z_{1})G_{ij}^{(1)}(z_{2})+G_{ij}^{(1)}(z_{1})G_{is}^{(1)}(z_{2})G_{tj}^{(1)}(z_{2})\right|^{2}\right]\\
&\leq&\f{2}{mb_{n}}\sum_{(s,t)\in I_{n}^{+}}\E\left[\left|\sum_{i,j\in I_{1}}G_{is}^{(1)}(z_{1})G_{tj}^{(1)}(z_{1})G_{ij}^{(1)}(z_{2})\right|^{2}\right]+\f{2}{mb_{n}}\sum_{(s,t)\in I_{n}^{+}}\E\left[\left|\sum_{i,j\in I_{1}}G_{ij}^{(1)}(z_{1})G_{is}^{(1)}(z_{2})G_{tj}^{(1)}(z_{2})\right|^{2}\right]\\
&=:&I_{1}+I_{2}.
\Eea
We estimate
\Bea
I_{1}&=&\f{2}{mb_{n}}\sum_{(s,t)\in I_{n}^{+}}\E\left[\left|\sum_{i,j\in I_{1}}G_{is}^{(1)}(z_{1})G_{tj}^{(1)}(z_{1})G_{ij}^{(1)}(z_{2})\right|^{2}\right]\\
&=&\f{2}{mb_{n}}\sum_{(s,t)\in I_{n}^{+}}\E\left[\left|\sum_{i\in I_{1}}G_{is}^{(1)}(z_{1})G_{ti}^{(1)}(z_{1},z_{2})\right|^{2}\right]\\
&=&\f{2}{mb_{n}}\sum_{(s,t)\in I_{n}^{+}}\E\left[\left|G_{st}^{(1)}(z_{1},z_{2},z_{1})\right|^{2}\right]\\
&\leq&\f{2}{mb_{n}}\E\left[\sum_{s,t=1}^{n}\left|G_{st}^{(1)}(z_{1},z_{2},z_{1})\right|^{2}\right]\\
&=&\f{2}{mb_{n}}\E\left[\|A\|_{Fb}^{2}\right]\\
&=&\f{2}{mb_{n}}\E\left[\sum_{i=1}^{n}\beta_{i}^{2}\right]\\
&\leq&\f{C(z_{1},z_{2})}{mb_{n}}O(b_{n})=O(1),
\Eea
where $\|\cdot\|_{Fb}$ is the Frobenius norm, $\beta_{i}$ are the eigenvalues of $VV^{*}$, and $V$ is the following matrix
\Bea
V_{n\times n}&=&\left[\begin{array}{cccc}
G_{11}^{(1)}(z_{1}) & G_{12}^{(1)}(z_{1}) & \cdots & G_{1k_{n}}^{(1)}(z_{1})\\
G_{21}^{(1)}(z_{1}) & G_{22}^{(1)}(z_{1}) & \cdots & G_{2k_{n}}^{(1)}(z_{1})\\
\vdots & \vdots & &\vdots\\
G_{n1}^{(1)}(z_{1}) &  G_{n2}^{(1)}(z_{1}) & \cdots & G_{nk_{n}}^{(1)}(z_{1})
\end{array}\right]_{n\times k_{n}}
\left[\begin{array}{cccc}
G_{11}^{(1)}(z_{2}) & G_{12}^{(1)}(z_{2}) & \cdots & G_{1k_{n}}^{(1)}(z_{2})\\
G_{21}^{(1)}(z_{2}) & G_{22}^{(1)}(z_{2}) & \cdots & G_{2k_{n}}^{(1)}(z_{2})\\
\vdots & \vdots & &\vdots\\
G_{k_{n}1}^{(1)}(z_{2}) &  G_{k_{n}2}^{(1)}(z_{2}) & \cdots & G_{k_{n}k_{n}}^{(1)}(z_{2})
\end{array}\right]_{k_{n}\times k_{n}}\\
&&\times\left[\begin{array}{cccc}
G_{11}^{(1)}(z_{1}) & G_{12}^{(1)}(z_{1}) & \cdots & G_{1n}^{(1)}(z_{1})\\
G_{21}^{(1)}(z_{1}) & G_{22}^{(1)}(z_{1}) & \cdots & G_{2n}^{(1)}(z_{1})\\
\vdots & \vdots & &\vdots\\
G_{k_{n}1}^{(1)}(z_{1}) &  G_{t2}^{(1)}(z_{1}) & \cdots & G_{k_{n}n}^{(1)}(z_{1})
\end{array}\right]_{k_{n}\times n}.
\Eea
Here we denoted the elements of  set $I_1$ as $I_{1}=\{1,2,\ldots,k_{n}\}.$ Observe that $k_n=2b_n$.
Rank of $V\leq k_{n}=O(b_{n})$. This implies
\Bea
\sum_{i=1}^{n}\beta_{i}^{2}\leq k_{n}C(z_{1},z_{2})=O(b_{n})C(z_{1},z_{2}).
\Eea
Therefore, Var$(T_{1})=O\left(\f{1}{b_{n}^{2}}\right)$, and hence Var$\left\{b_{n}\E_{1}\left[A^{\circ}(z_{1})A^{\circ}(z_{2})\right]\right\}=O\left(\f{1}{b_{n}}\right)$.


Second part of \textbf{(iii)} follows from the following two facts with the help of Cauchy's theorem.
\Bea
&&b_{n}\E_{1}\left[A^{\circ}(z_{1})B^{\circ}(z_{2})\right]=b_{n}\f{d}{dz_{2}}\E_1\left[A^{\circ}(z_{1})A^{\circ}(z_{2})\right]\\
\text{and}\;\;\;&&\text{Var}\left\{b_{n}\E_{1}\left\{A^{\circ}(z_{1})A^{\circ}(z_{2})\right\}\right\}=O\left(\f{1}{b_{n}}\right).
\Eea
Here we skip the details.
\comment{Let us call $g(z_{2})=\E_1[A^{\circ}(z_{1})A^{\circ}(z_{2})]$. Then using the Cauchy integration formula over a closed contour $C$ around $z_{2}$ in the upper half plane and the dominated convergence theorem we have
\Bea
\text{Var}\left\{b_{n}\E_{1}\left\{A^{\circ}(z_{1})B^{\circ}(z_{2})\right\}\right\}&=&\text{Var}\left\{b_{n}\f{d}{dz_{2}}\E_1\left[A^{\circ}(z_{1})A^{\circ}(z_{2})\right]\right\}\\
&=&\text{Var}\left\{b_{n}\f{d}{dz_{2}}g(z_{2})\right\}\\
&=&\text{Var}\left\{\f{1}{\pi i}\oint_{C}\f{b_{n}g(z)\;dz}{(z-z_{2})^{2}}\right\}\\
&=&\f{1}{2\pi i}\oint_{C}\f{1}{(z-z_{2})^{2}}\text{Var}\{b_{n}g(z)\}\;dz\\
&=&O\left[\f{1}{2\pi i}\oint\f{1}{(z-z_{2})^{2}}\;dz\right]O\left(\f{1}{b_{n}}\right)\\
&=&O\left(\f{1}{b_{n}}\right).
\Eea
}


\noindent\textbf{Proof of (iv):} Using \eqref{eqn: bounds of B/A, 1/EA, and EB by 1/Imz} and \eqref{eqn: trace of difference of resolvents interms of A and B}, and proceeding as the proof of proposition \ref{prop: bound on Var(gamma)}, 
\Bea
\E\left[\left|{\g_{n-1}^{\circ}}(z)-\g_{n}^{\circ}(z)\right|^{4}\right]
&=&\E\left[\left|\left(\text{Tr}G^{(1)}(z)-\E[\text{Tr}G^{(1)}(z)]\right)-\left(\text{Tr}G(z)-\E[\text{Tr}G(z)]\right)\right|^{4}\right]\\
&=&\E\left[\left|\f{1+B(z)}{A(z)}-\E\left[\f{1+B(z)}{A(z)}\right]\right|^{4}\right]\\
&\leq&\f{C}{|\Im z|^{8}}\left[\E\left[\left|A^{\circ}\right|^{4}\right]+\E\left[\left|B^{\circ}\right|^{4}\right]+\E\left[\left|A^{\circ}\right|^{4}\right]\right]=O\left(\f{1}{b_{n}^{2}}\right).
\Eea
The last equality follows from the estimates \eqref{eqn: estimate of A^4} and \eqref{eqn: estimate of B^4}.


Using martingale differences as in the proof of Proposition \ref{prop: bound on Var(gamma)},
\Bea
\E\left[\left|\g_{n}^{\circ}\right|^{4}\right]\leq Cn\sum_{k=1}^{n}\E\left[\left|\g_{n}-\E_{k}[\g_{n}]\right|^{4}\right].
\Eea
Consider for $k=1$, others will be similar.
\Bea
\E\left[\left|\g_{n}-\E_{1}[\g_{n}]\right|^{4}\right]&=&\E\left[\left|\text{Tr}(G-G^{(1)})-\E_{1}\left[\text{Tr}(G-G^{(1)})\right]\right|^{4}\right]\\
&=&\E\left[\left|\f{1+B(z)}{A(z)}-\E_{1}\left[\f{1+B(z)}{A(z)}\right]\right|^{4}\right]\\
&\leq&C_{1}(z)\E[|A_{1}^{\circ}|^{4}]+C_{2}(z)\E[|B_{1}^{\circ}|^{4}]\\
&=&O\left(\f{1}{b_{n}^{2}}\right).
\Eea
The last equality follows from \eqref{eqn: estimate of A_1^4} and \eqref{eqn: estimate of B_1^4}. Hence we have the result.\\

\noindent\textbf{Proof of (v):}
Using resolvent identity,
$$(X_2-zI)^{-1}=(X_1-zI)^{-1}+ (X_1-zI)^{-1}(X_1-X_2)(X_2-zI)^{-1},$$
we have
\begin{equation}\label{eqn: resolvent of G_11}zG_{11}(z)=- 1+\sum_{(1,k)\in I_n} m_{1k}G_{k1},\end{equation}
where $I_{n}$ is defined in \eqref{defn: definition of I_1, I_n} and $m_{ij}$s are defined in \eqref{defn: random band matrix}. Now to analyse the terms $\E[m_{1k}G_{k1}]$, we use the following (see eg. \cite{lytova2009central}): Given $\xi$, a real valued random variable with $p+2$ finite moments, and $\phi$, a function from $\mathbb C\to \mathbb R$ with $p+1$ continuous and bounded derivatives then:
\begin{equation}\label{decoupling formula}\E[\xi\phi(\xi)]=\sum_{a=0}^p \frac{\kappa_{a+1}}{a!} \E\left[\phi^{(a)}(\xi)\right]+\epsilon_{p+1}\end{equation}
where $\kappa_a$ is the $a$-th cumulant of $\xi$, $|\epsilon_{p+1}|\leq C\sup_t|\phi^{(p+1)}(t)|\E[|\xi|^{p+2}]$ and  $C$ depends only on $p$.
Since $f_n(z)=\frac 1n \E [\text{Tr} G(z)]=\E [G_{11}(z)],$ using \eqref{eqn: resolvent of G_11} and \eqref{decoupling formula} we get
\bea\label{eqn: master eqn f_n}
zf_n(z)= -1+\sum_{(1,k)\in I_n}\E [m_{1k}G_{k1}] = -1-\sum_{k\in I_1}\frac{1}{b_n} \E\left[G_{k1}^2 + G_{kk}G_{11}\right]+r_n,
\eea
where $r_n$ contains the third cumulant term corresponding to $p=2$ in \eqref{decoupling formula} for $k\neq 1$, and the error terms due to the truncation of the decoupling formula \eqref{decoupling formula}  at $p=2$ for $k\neq 1$ and at $p=0$ for $k=1$. We write \eqref{eqn: master eqn f_n}
\Bea
zf_n(z)&=& -1-\frac{1}{b_n}\E [G_{11}]\E\left[\sum_{k\in I_1}G_{kk}\right]-\frac{1}{b_n} \text{Cov}\left(G_{11},\sum_{k\in I_1}G_{kk}\right)-\frac{1}{b_n}\E\left[\sum_{k\in I_1}G_{k1}^2\right]+r_n\\
&=& -1-f_n(z)( 2f_n(z))-\frac{1}{b_n} \text{Cov}\left(G_{11},\sum_{k\in I_1}G_{kk}\right)-\frac{1}{b_n}\E\left[\sum_{k\in I_1}G_{k1}^2\right]+r_n.
\Eea
 Now, by the Cauchy-Schwarz inequality and \eqref{final result var sum G_{ii}} we get
\Bea
\frac{1}{b_n}\left|\text{Cov}\left(G_{11},\sum_{k\in I_1}G_{kk}\right)\right| &\leq & \frac{1}{b_n} \sqrt{\text{Var} (G_{11})}\sqrt{\text{Var}\left(\sum_{k\in I_1}G_{kk}\right)}\\
&\leq & \frac{1}{b_n}2|\Im z|^{-1}\sqrt{O(|\Im z|^{-4})}\\
&=&O\left(\f{1}{b_{n}|\Im z|^{3}}\right).
\Eea
Also notice that
$$\frac{1}{b_n}\left|\E\left(\sum_{k\in I_1}G_{k1}^2\right)\right|\leq \frac{1}{b_n}|\Im z|^{-2}.$$
We claim  $\displaystyle{r_n=O\left(\frac{1}{b_n|\Im z|^{4}}\right)}$. To prove this, observe that the third cumulant term gives
\begin{equation}\label{eqn: 3rd cumulant}
\frac{\kappa_3}{2b_n^{3/2}}\E\left[\sum_{k\in I_1}2(G_{1k})^3+6G_{11}G_{1k}G_{kk}\right]
\end{equation}
Since $$\sum_{k\in I_1}|G_{1k}|^2\leq \|G\|^{2}\leq |\Im z|^{-2} \ \ \mbox{and}\ \ |G_{ij}|\leq |\Im z|^{-1},$$
we conclude that the third cumulant term contributes $\displaystyle{O\left(\frac{1}{b_n|\Im z|^3}\right)}$ to $r_n$. In a similar manner, the error due to truncation of decoupling formula \eqref{decoupling formula} at $p=2$ is $\displaystyle{O\left(\frac{1}{b_n|\Im z|^4}\right)}$. Similarly, the error term due to truncation of decoupling formula at $p=0$ for $k=1$ is $\displaystyle{O\left(\frac{1}{b_n|\Im z|^2}\right)}$. Thus the claim is proved.
Hence
$$zf_n(z)=-1-2f_n^2(z)+O\left(\frac{1}{b_n|\Im z|^{4}}\right)\ \ \ \mbox{for}\ z\in \mathbb C\bs\mathbb R.$$
Now following similar argument given in the proof of (3.1) in \cite{pizzo2012fluctuations}, one can show that
$$|f_n(z)-f(z)|\leq O\left(\frac{1}{b_n|\Im z|^6}\right)$$
where $f(z)=\frac{1}{4}(-z+\sqrt{z^2-8})$.\\\\
\noindent
\textbf{Proof of (vi):} Recall $A(z)=z-b_n^{-1/2}w_{11}+b_n^{-1}\sum_{i,j\in I_1}G^{(1)}_{ij}w_{1i}w_{1j}$. Now using \eqref{eqn: asymptotic of f_n(z)} with $G$ replaced by $G^{(1)}$, we have
\Bea
(\E[A(z)])^{-1}=\frac{1}{z+b_n^{-1}\sum_{j\in I_1}\E [G^{(1)}_{jj}]}=\frac{1}{z+2f_n(z)}=(z+2f(z))^{-1}+O(b_n^{-1})=-f(z)+O(b_n^{-1})\Eea
Hence $(\E[A(z)])^{-1}=- f(z)+O(b_n^{-1})$. To prove the second part, observe that
\Bea\E [B(z)]=\frac{1}{b_n}\E \left[\sum_{i,j\in I_1}(G^{(1)}G^{(1)})_{ij}w_{1i}w_{1j}\right]=\frac{1}{b_n}\E\left[\sum_{i\in I_1}(G^{(1)}G^{(1)})_{ii}\right]=\frac{1}{b_n}\E\left[\sum_{i\in I_1}\sum_{k=2}^n G_{ik}^{(1)} G_{ki}^{(1)}\right]=\frac{1}{b_n}\sum_{i\in I_1}\frac{d}{dz}G^{(1)}_{ii}
\Eea
Again using \eqref{eqn: asymptotic of f_n(z)} and Cauchy's integral formula, we have
$$\E [B(z)]=\frac{d}{dz}(2f_n(z))=2f'(z)+O(b_n^{-1}).$$
This completes the proof of Lemma \ref{main lemma with poincare condition}.   \end{proof}

\subsection{Proof of \eqref{eqn: limit of ET_n}:}\label{proof of ET_n}
\begin{proof}
We have to find the limit of
$$\E [T_n]=\frac{2}{b_n}\E\left[\sum_{i,j\in I_1}G_{ij}^{(1)}(z)G_{ij}^{(1)}(z_\mu)\right]$$
as $n\to\infty$, where $I_1=\{2\leq i\leq n:(1,i)\in I_n\}$. Let $f,g\in C_b(\mathbb R)$. Define a bilinear form on $C_b(\mathbb R)$ as
\begin{equation}\label{define inner product}
\langle f,g \rangle_n=\frac{1}{b_n}\E \left[\sum_{i,j\in I_1}f(M)_{ij}g(M)_{ji}\right].\end{equation}
Then $\E [T_n]=\langle h(M),h_{\mu}(M)\rangle_{n}$, where $h(x)=(x-z)^{-1}$ and $h_{\mu}(x)=(x-z_{\mu})^{-1}$.
\begin{lem}
For $f,g\in C_b(\mathbb R)$ the limit
$\displaystyle{\langle f,g \rangle=\lim_{n\rightarrow \infty}\langle f,g \rangle_n}$
exists.
\end{lem}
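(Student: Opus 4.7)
The plan is threefold: (i) establish a uniform bound on $|\langle f,g\rangle_n|$ in terms of $\|f\|_\infty\|g\|_\infty$, (ii) prove convergence on a dense subclass of test functions via a moment-method analysis, and (iii) extend by continuity to all $f,g\in C_b(\R)$.

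For the uniform bound, observe that $f(M)$ and $g(M)$ are real symmetric, so $f(M)_{ji}=f(M)_{ij}$, and Cauchy--Schwarz gives
$$\left|\sum_{i,j\in I_1}f(M)_{ij}g(M)_{ji}\right|^2\le\Bigl(\sum_{i,j\in I_1}f(M)_{ij}^2\Bigr)\Bigl(\sum_{i,j\in I_1}g(M)_{ij}^2\Bigr).$$
For each $i$, the spectral theorem yields $\sum_{j\in I_1}f(M)_{ij}^2\le\sum_{j=1}^n f(M)_{ij}^2=(f(M)^2)_{ii}\le\|f\|_\infty^2$. Summing over $|I_1|=O(b_n)$ indices gives $\sum_{i,j\in I_1}f(M)_{ij}^2\le C b_n\|f\|_\infty^2$ and likewise for $g$. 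Dividing by $b_n$ produces $|\langle f,g\rangle_n|\le C\|f\|_\infty\|g\|_\infty$ uniformly in $n$.

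For convergence on a dense subclass, I would take polynomial test functions $f(x)=x^k$, $g(x)=x^\ell$, so that
$$\langle x^k,x^\ell\rangle_n=\f{1}{b_n}\E\Bigl[\sum_{i,j\in I_1}(M^k)_{ij}(M^\ell)_{ji}\Bigr]$$
expands as a normalized count of closed walks of length $k+\ell$ on the band structure whose starting/ending vertex $i$ and midpoint vertex $j$ both lie in $I_1$. Applying the standard moment method for band matrices (Molchanov et al.), the dominant contribution comes from non-crossing pair-partitions of the edges, in which each $m_{\alpha\beta}$ is paired with exactly one other copy. A pair-partition with $(k+\ell)/2$ distinct edges contributes a weight of order $b_n^{-(k+\ell)/2}$, while the band restriction $i,j\in I_1$ allows $O(b_n^{(k+\ell)/2+1})$ choices of the walk vertices, producing a finite limit after the $1/b_n$ normalization. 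By linearity, $\lim_n\langle P,Q\rangle_n$ exists for all polynomial pairs $P,Q$.

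Finally, to extend to general $f,g\in C_b(\R)$, I would exploit concentration of $\|M\|$ near $2\sqrt 2$: under the Poincar\'e hypothesis of Theorem~1, standard arguments (Alon--Krivelevich--Vu type tail bounds combined with Lemma~\ref{main lemma with poincare condition}) yield $\Pb(\|M\|>R)\to0$ as $R\to\infty$ uniformly in $n$. Given $f\in C_b$ and a smooth cutoff $\chi_R$ equal to $1$ on $[-R,R]$ and supported on $[-R-1,R+1]$, the uniform bound of Step~(i) combined with this tail estimate shows $\langle f,g\rangle_n-\langle f\chi_R,g\chi_R\rangle_n\to0$ as $R\to\infty$ uniformly in $n$. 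On $[-R-1,R+1]$ polynomials are uniformly dense by Weierstrass, and the uniform bound upgrades polynomial convergence to convergence for all continuous compactly supported pairs. A standard $3\varepsilon$-argument shows $\langle f,g\rangle_n$ is Cauchy, so the limit exists. The main obstacle lies in Step~(ii): unlike a normalized trace, the bilinear form constrains both index sums to lie in the narrow window $I_1$, which makes the walk combinatorics more delicate than in the Wigner case and requires careful bookkeeping to show that long walks and crossing pairings are indeed subleading in the regime $b_n\gg\sqrt n$.
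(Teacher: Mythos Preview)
Your three-step plan---uniform bound, moment method on monomials, then density extension---is exactly the paper's approach; your step (i) and the truncation/Weierstrass argument in (iii) are in fact more explicit than the paper, which simply refers to \cite{li2013central} for the extension to general $C_b$ functions. For the obstacle you flag in (ii), the paper absorbs the constraint $i_0\in I_1$ by appending the matched pair $w_{1i_0},w_{i_01}$ to the walk, after which the remaining constraint $i_l\in I_1$ becomes the increment condition $|y_0+y_1+\cdots+y_k|\le b_n$ on the free steps of the associated Dyck path (with $k=S(l)$ the height at step $l$), yielding the explicit limit $\langle x^l,x^m\rangle=(\sqrt2)^{l+m+2}\sum_k\#\{\text{Dyck paths of length }l+m:\ S(l)=k\}\cdot\gamma_{k+1}$ where $\gamma_{k+1}=\Pb(|T_0+\cdots+T_k|\le\tfrac12)$ for i.i.d.\ uniform $T_i$ on $[-\tfrac12,\tfrac12]$; note this step requires only $b_n\to\infty$, not $b_n\gg\sqrt n$.
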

\begin{proof}
The idea of the proof is similar to the proof of Lemma 3.11 of \cite{li2013central}.
First we prove this result for monomials. Although monomials are unbounded, still \eqref{define inner product} makes sense for all $n$, since all moments of the entries of $M$ are  finite. Consider $f(x)=x^l$ and $g(x)=x^m$ where $l,m \in \mathbb N$. Then
\Bea
\langle x^l,x^m \rangle_n &=& \frac{1}{b_n^{1+(l+m)/2}}\mathop{\sum_{(i_0,i_1),(i_1,i_2),\ldots,(i_{l+m-1},i_0)\in I_n}}_{i_o,i_l\in I_1}\E\left[ w_{i_0i_1}w_{i_1i_2}\ldots w_{i_{l+m-1}i_0}\right]\\
\Eea
If $(l+m)$ is odd then $\langle x^l,x^m\rangle_n \rightarrow 0$ using independence of matrix entries and $\E(w_{ij})=0$, and order counting of independent vertices. The argument is similar to the combinatorial  argument given
in the proof of  Wigner semicircular law (see \cite{anderson2010introduction}).  We leave it for the reader.

Now we assume $l+m$ is even. Then
\bea\label{eqn: x^l,x^m interms of (1,i_l)}
\langle x^l,x^m \rangle_n &=& \frac{1}{b_n^{1+(l+m)/2}}\mathop{\sum_{(i_0,i_1),(i_1,i_2),\ldots,(i_{l+m-1},i_0)\in I_n}}_{i_o,i_l\in I_1}\E\left[ w_{i_0i_1}w_{i_1i_2}\ldots w_{i_{l+m-1}i_0}\right]\nonumber \\
&=& \frac{1}{b_n^{1+(l+m)/2}}\mathop{\sum_{(i_0,i_1),(i_1,i_2),\ldots,(i_{l+m-1},i_0)\in I_n}}_{i_o,i_l\in I_1} \E\left[ w_{1i_0}w_{i_0i_1}w_{i_1i_2}\ldots w_{i_{l+m-1}i_0}w_{i_01}\right]+O(b_n^{-1})\nonumber \\
&=&\frac{1}{b_n^{1+(l+m)/2}}\mathop{\sum_{(i_0,i_1),(i_1,i_2),\ldots,(i_{l+m-1},i_0)\in I_n}}_{(1,i_o),(1,i_l)\in I_n} \E\left[ w_{1i_0}w_{i_0i_1}w_{i_1i_2}\ldots w_{i_{l+m-1}i_0}w_{i_01}\right]+O(b_n^{-1})
\eea
The second last equality in \eqref{eqn: x^l,x^m interms of (1,i_l)} holds due to order calculation of independent vertices and independence of matrix entries. Now  define for $k=1,2,\ldots,l+m$,
$$x_k=\left\{\begin{array}{ccc}
i_k-i_{k-1}&\text{if}&|i_k-i_{k-1}|\leq b_n\\
(i_k-i_{k-1})-n&\text{if}&i_k-i_{k-1}>b_n\\
n+(i_k-i_{k-1})&\text{if}&i_k-i_{k-1}<-b_n
\end{array}
\right.
\ \text{with}\ i_{l+m}=i_0, \ \text{and}
$$
$$
x_{0}=\left\{\begin{array}{ccc}
i_0-1&\text{if}&|i_0-1|\leq b_n\\
(i_0-1)-n&\text{if}&i_0-1>b_n\\
\end{array}
\right.
\ \mbox{and} \ \
x_{l+m+1}=\left\{\begin{array}{ccc}
1-i_0&\text{if}&|1-i_0|\leq b_n\\
n+(1-i_0)&\text{if}&1-i_0<-b_n.
\end{array}
\right.
$$
Note, $x_{0}=-x_{l+m+1}$. Since $l,m$ are fixed and $b_n\to \infty$, for large $n$ the restrictions $\{(i_0,i_1),(i_1,i_2),\ldots, (i_{l+m-1},i_0)\in I_n \ \text{and}\ (1,i_0),(1,i_l)\in I_n\}$ are equivalent to $\{|x_0|, |x_1|,\ldots,|x_{l+m}|\leq b_n,\  x_0+x_1+\cdots+x_{l+m}+x_{l+m+1}=0\ \text{and } |x_0+x_1+\cdots+x_l|\leq b_n\}$. Also observe that $x_0+x_1+\cdots+x_{l+m}+x_{l+m+1}=0$ is same as $x_1+\cdots+x_{l+m}=0$  since $x_{0}=-x_{l+m+1}$. Therefore for large $n$
$$\langle x^l,x^m \rangle_n=\frac{1}{b_n^{1+(l+m)/2}}\mathop{\sum_{x_1+\cdots+x_{l+m}=0}}_{|x_i|\leq b_n,0\leq i\leq l+m,\ |x_0+x_1+\cdots +x_l|\leq b_n}\E\left[ w_{1i_0}w_{i_0i_1}w_{i_1i_2}\ldots w_{i_{l+m-1}i_0}w_{i_01}\right)+O(b_n^{-1}].$$

Without loss of generality, we assume that $l\leq m$. Each $\{i_0,i_1,i_2,\ldots,i_{l+m-1},i_0\}$ is a closed path such that distance between the end points of each edge is bounded by $b_n$. As in the proof of Wigner semicircular law only the paths whose edges are pair matched contributes to the limit, here also, only such paths contribute to the limit.  And contribution of each path is $\E(w_{1i_0}w_{i_0i_1}\ldots w_{i_{l+m-1}i_0}w_{i_0 1})=1$ since $\E(w_{ij}^2)=1$. Each such path corresponds to a Dyck path of length $(l+m)$. Recall that a Dyck path $(S(0),S(1),\ldots,S(l+m))$ of length $(l+m)$ satisfies (see \cite{anderson2010introduction})
$$S(0)=S(l+m)=0,\ S(1),S(2),\ldots,S(l+m-1)\geq 0\ \mbox{and}\ |S(i+1)-S(i)|=1,\ \mbox{for}\ i=0,1,\ldots,l+m-1.$$
Specifically, $S(t+1)-S(t)=1$ if the non-oriented edge $(i_t,i_{t+1})$ appears in $\{i_0,i_1,\ldots,i_{l+m-1},i_0\}$ for the first time and $ S(t+1)-S(t)=-1$ if the edge $(i_t,i_{t+1})$ appears in $\{i_0,i_1,\ldots,i_{l+m-1},i_0\}$ for the second time.

Here each Dyck path does not give equal contribution to the limit  due to the condition that $(1,i_l)\in I_n$ and in terms of $x_i$, which is same as $|x_0+x_1+\cdots +x_l|\leq b_n$. We have to take into account this condition. Suppose $S(l)=k$, $0\leq k\leq l$.  Then during the first $l$ steps of the path $\{i_0,i_1,\ldots,i_{l+m-1},i_0\}$,
$k$ edges appear only once and $(l-k)/2$ edges appear twice. The edges appearing twice, the corresponding two number $x_i$ have same absolute value but with different sign. We rename  the remaining $k$ numbers $x_i$  which appear only once as $y_1,y_2,\ldots,y_k$ (according to their order of appearance) and $x_0$ as $y_0$. So the condition $|x_0+x_1+\ldots+x_l|\leq b_n$ reduces to $|y_0+y_1+\ldots+y_k|\leq b_n$. Therefore
\Bea
  \langle x^l,x^m \rangle_n&=&\frac{1}{b_n^{1+(l+m)/2}}\sum_{k=0}^l \#\{\text{Dyck path of length }l+m \text{ with }S(l)=k\}\\
  && \times \#\{|y_0|\leq b_n,|y_1|\leq b_n,\ldots,|y_k|\leq b_n,\ldots,|y_{l+m}|\leq b_n,|y_0+y_1+\cdots +y_k|\leq b_n\}+O(b_n^{-1}).
  \Eea
 and
  \Bea
  \langle x^l,x^m \rangle&=& \lim_{n\to \infty} \langle x^l,x^m \rangle_n\\
  &=&(\sqrt 2)^{l+m+2}\sum_{k=0}^l \#\{\text{Dyck path of length }l+m \text{ with }S(l)=k\}\\
  &&\times \text{Vol}\{|t_0|\leq 1/2,|t_1|\leq 1/2,\ldots,|t_{\frac{l+m}{2}}|\leq 1/2, |t_0+t_1+\cdots +t_k|\leq 1/2\}\\
 &=&(\sqrt 2)^{l+m+2}\sum_{k=0}^l \#\{\text{Dyck path of length }l+m \text{ with }S(l)=k\} \times P(|T_0+T_1+\cdots +T_k|\leq 1/2)\\
\Eea
where $T_0,T_1,\ldots,T_{\frac{l+m}{2}}$ are independent random variables uniformly distributed on $[-1/2,1/2]$. Let $S_{k+1}=T_0+T_1+\ldots +T_k$. Then
$$\E\left[e^{ixS_{k+1}}\right]=\left(\E [e^{ixT_0}]\right)^{k+1}=\left(\frac{\sin x/2}{x/2}\right)^{k+1}.$$
Using inversion formula, the  density of $S_{k+1}$ is given by
$$f_{k+1}(s)=\frac{1}{2\pi}\int_{-\infty}^{\infty}e^{-ixs}\left(\frac{\sin x/2}{x/2}\right)^{k+1}dx.$$
Now $$\gamma_{k+1}:=P(|S_{k+1}|\leq 1/2)=\int_{-1/2}^{1/2}f_{k+1}(s)ds=\frac{1}{2\pi} \int_{-\infty}^{\infty}\left(\frac{\sin x/2}{x/2}\right)^{k+2}dx=f_{k+2}(0),$$
using \cite{cramer1999mathematical} we get exact formula of $\gamma_{k+1}$:
\bea\label{eqn: gamma_{k+1}}
\gamma_{k+1}=\left\{\begin{array}{ccc}
\frac{1}{(k+1)!}\sum_{s=0}^{(k+1)/2}(-1)^s {{k+2}\choose s} \left(\frac{k+1}{2} -s+\frac12\right)^{k+1}&\text{if}& k+1 \text{ even}\\
\frac{1}{(k+1)!}\sum_{s=0}^{k/2}(-1)^s {{k+2}\choose s} \left(\frac{k+1}{2} -s+\frac12\right)^{k+1}&\text{if}& k+1 \text{ odd}.\\
\end{array}
\right.
\eea
The number of Dyck path of length $l+m$ with $S(l)=k$ is
\bea\label{eqn: no. of dyck path}\left[{l \choose \frac{l-k}{2}} -{l \choose \frac{l-k-2}{2}}\right]\times \left[ {m \choose \frac{m-k}{2}}-{m \choose \frac{m-k-2}{2}}\right]=\frac{(k+1)^2}{(l+1)(m+1)}{l+1 \choose \frac{l+k+2}{2}}{m+1 \choose \frac{m+k+2}{2}}.\eea
Hence from \eqref{eqn: gamma_{k+1}} and \eqref{eqn: no. of dyck path}, we get  $$\langle x^l,x^m\rangle=(\sqrt 2)^{l+m+2}C_{l,m}$$
where $C_{l,m}=0$ if $(l+m)$ is odd and
\Bea C_{l,m}&=&\sum_{k=0}^l \frac{(k+1)^2}{(l+1)(m+1)} {l+1 \choose \frac{l+k+2}{2}}{m+1 \choose \frac{m+k+2}{2}}\gamma_{k+1}\\
&=& \left\{
\begin{array}{ccc}
\sum_{k=0}^{l/2}\frac{(2k+1)^2}{(l+1)(m+1)}{l+1 \choose \frac{l-2k}{2}}{m+1 \choose \frac{m-2k}{2}}\gamma_{2k+1}& \text{if}& l \text{ even}\\
\sum_{k=0}^{(l-1)/2}\frac{(2k+2)^2}{(l+1)(m+1)}{l+1 \choose \frac{l-2k-1}{2}}{m+1 \choose \frac{m-2k-1}{2}}\gamma_{2k+2}& \text{if}& l \text{ odd}
\end{array}
\right.
\Eea
if $(l+m)$ is even and $l\leq m$, otherwise, $C_{l,m}=C_{m,l}$. If $f,g$ are polynomials, $f(x)=\sum_{i=0}^pa_i x^i,\ g(x)=\sum_{i=0}^q b_ix^i$, then by linearity
\bea \label{eqn: bilinear form for polynomial}\langle f,g \rangle=\sum_{i=0}^p\sum_{j=0}^q a_ib_j (\sqrt 2)^{i+j+2}C_{i,j}.\eea
For general bounded continuous functions $f,g$, to show that $\langle f,g\rangle$ exists we have to use the Stone-Weierstrass theorem to approximate $f,g$ by appropriate polynomial and then \eqref{eqn: bilinear form for polynomial}. The argument is similar to the argument given in the proof of Lemma 3.11 of \cite{li2013central}. We skip the details.
\end{proof}
In the next lemma we diagonalize the bilinear form $\langle f,g \rangle$.
\begin{lem}
Let $\{U_n(x)\}_{n\geq 0}$ be the rescaled Chebyshev polynomial of the second kind on $[-2\sqrt 2,2\sqrt 2]$,
$$U_n(x)=\sum_{k=0}^{\lfloor \frac n2 \rfloor}(-1)^k {n-k \choose k}\left(\frac {x}{\sqrt 2}\right)^{n-2k}.$$
Then $\{U_n(x)\}$ are orthogonal with respect to the bilinear form \eqref{eqn: bilinear form for polynomial}, that is,
\bea\label{eqn: inner product U_n,U_m}
\langle U_n,U_m \rangle=2\delta_{nm}\gamma_{n+1},
\eea
where $\gamma_{n+1}$ is defined in \eqref{eqn: gamma_{k+1}}.
\end{lem}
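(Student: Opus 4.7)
The plan is to reduce the orthogonality statement for $\{U_n\}$ to the explicit formula for $\langle x^l, x^m \rangle$ already derived in the previous lemma, using the standard combinatorial expansion of monomials in Chebyshev polynomials of the second kind.

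First I would rewrite the formula from the previous lemma in a symmetric form. Setting
\[
N(l,k) := \frac{k+1}{l+1}\binom{l+1}{(l-k)/2},
\]
with the convention that $N(l,k)=0$ whenever $l-k$ is odd or negative, the identity $\binom{l+1}{(l-k)/2}=\binom{l+1}{(l+k+2)/2}$ shows that the previous lemma can be restated as
\[
\langle x^l,x^m\rangle \;=\; 2(\sqrt{2})^{l+m}\sum_{k\ge 0} N(l,k)\,N(m,k)\,\gamma_{k+1}.
\]
Thus $N(l,k)$ is the ballot-type number that counts lattice paths of length $l$ from $0$ to $k$ with steps $\pm 1$ staying on the non-negative half-line — exactly the combinatorial data that appeared via the Dyck path count with $S(l)=k$ in the proof of the previous lemma.

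Next, I would invoke the standard Chebyshev expansion of monomials. Let $\tilde U_k$ denote the usual Chebyshev polynomial of the second kind on $[-2,2]$; then $U_n(x)=\tilde U_n(x/\sqrt 2)$, and the classical identity $y^l = \sum_{k\ge 0} N(l,k)\,\tilde U_k(y)$ (which is equivalent, via inclusion–exclusion on reflections, to $N(l,k)=\binom{l}{(l-k)/2}-\binom{l}{(l-k-2)/2}$) rescales to
\[
x^l \;=\; (\sqrt{2})^{l}\sum_{k\ge 0} N(l,k)\,U_k(x).
\]
Substituting this into both arguments of the bilinear form and using its bilinearity gives
\[
\langle x^l,x^m\rangle \;=\; (\sqrt{2})^{l+m}\sum_{k_1,k_2\ge 0} N(l,k_1)\,N(m,k_2)\,\langle U_{k_1},U_{k_2}\rangle.
\]

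Finally, I would match the two expressions. The matrix $(N(l,k))_{l,k\ge 0}$ is lower triangular (since $N(l,k)=0$ for $k>l$) with $N(l,l)=1$ on the diagonal, hence invertible on polynomial space; the identity above must hold for all $l,m\ge 0$, so equating the two formulas and inverting in $l$ and in $m$ forces
\[
\langle U_{k_1},U_{k_2}\rangle \;=\; 2\gamma_{k_1+1}\,\delta_{k_1 k_2},
\]
which is \eqref{eqn: inner product U_n,U_m}. Equivalently, one can feed the defining formula $U_n(x)=\sum_{i}(-1)^i\binom{n-i}{i}(x/\sqrt 2)^{n-2i}$ into the bilinear form and apply the inversion identity $\sum_{i\ge 0}(-1)^i\binom{n-i}{i}N(n-2i,k)=\delta_{n,k}$ (which is just the statement that the Chebyshev polynomials and their inverse expansions are inverse to each other) to collapse the double sum. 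The main conceptual point — and the only thing that is special about the band-matrix setting — is that the Dyck-path contribution $S(l)=k$ gets weighted by $\gamma_{k+1}$ rather than by $\delta_{k,0}$ as in the Wigner case; this modification is exactly what shifts the orthogonality weight from the semicircle $\int U_nU_m\,\rho_{sc}$ to the weights $2\gamma_{n+1}$, and the rest of the argument is linear-algebraic.
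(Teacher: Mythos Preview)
Your argument is correct and is genuinely different from the paper's. The paper expands $U_{2n}$ (resp.\ $U_{2n+1}$) in monomials, computes $\langle x^{2l},U_{2n}\rangle$ as a sum, rewrites that sum as a terminating ${}_2F_1$, and then invokes the Chu--Vandermonde identity to see it collapses to $0$ for $l<n$ and to the claimed value for $l=n$. You instead observe that the formula $\langle x^l,x^m\rangle=2(\sqrt 2)^{l+m}\sum_k N(l,k)N(m,k)\gamma_{k+1}$ already has the shape $N\,\mathrm{diag}(2\gamma_{k+1})\,N^{T}$, and that $N$ is precisely the (unit lower-triangular) transition matrix from the $U_k$ to the monomials; the orthogonality then drops out by inverting $N$ on both sides, with no hypergeometric computation. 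Your route is cleaner and explains \emph{why} Chebyshev polynomials diagonalise this bilinear form: the Dyck-path decomposition by the height $S(l)=k$ is already a Chebyshev decomposition, since $N(l,k)$ is simultaneously the ballot count and the Chebyshev expansion coefficient. The paper's route, by contrast, is self-contained once $C_{l,m}$ is in hand and does not assume the reader knows the identity $y^l=\sum_k N(l,k)\tilde U_k(y)$.
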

\begin{proof}
The proof of this lemma is similar to the proof of Lemma 3.12 of \cite{li2013central}. For sake of completeness we outline it here. Since $\langle x^l,x^m \rangle=0$ if $l+m$ is odd, from linearity $\langle U_l, U_m\rangle =0$ if $l+m$ is odd. We are left to compute $\langle U_{2n},U_{2m} \rangle$ and $\langle U_{2n+1},U_{2m+1}\rangle$. We first compute $\langle x^{2l},U_{2n}\rangle$ and $\langle x^{2l+1},U_{2n+1}\rangle$ for $l=0,1,\ldots,n$.
\Bea
\langle x^{2l},U_{2n}\rangle &=& (\sqrt 2)^{2l+2}\sum_{k=0}^n (-1)^k {2n-k \choose k} C_{2l,2n-2k}\\
&=& (\sqrt 2)^{2l+2}\left [\sum_{k=0}^{n-l}(-1)^k {2n-k \choose k}\sum_{t=0}^l \frac{(2t+1)^2}{(2l+1)(2n-2k+1)}{2l+1 \choose l-t}{2n-2k+1 \choose n-k-t}\gamma_{2t+1}\right.\\
&&\left. + \sum_{k=n-l+1}^{n}(-1)^k {2n-k \choose k}\sum_{t=0}^{n-k} \frac{(2t+1)^2}{(2l+1)(2n-2k+1)}{2l+1 \choose l-t}{2n-2k+1 \choose n-k-t}\gamma_{2t+1}\right]
\Eea
\Bea
&=&(\sqrt 2)^{2l+2} \sum_{t=0}^l \frac{(2t+1)^2}{2l+1} {2l+1 \choose l-t}\left[\sum_{k=0}^{n-t}\frac{(-1)^k(2n-k)!}{k!(n-k-t)! (n-k+t+1)!}\right] \gamma_{2t+1}\\
&=&(\sqrt 2)^{2l+2} \sum_{t=0}^l \frac{(2t+1)^2}{2l+1} {2l+1 \choose l-t} G_1(n,t)\gamma_{2t+1},
\Eea
where $$G_1(n,t)=\sum_{k=0}^{n-t}\frac{(-1)^k(2n-k)!}{k!(n-k-t)! (n-k+t+1)!}.$$
Similarly,
\Bea
\langle x^{2l+1}, U_{2n+1}\rangle &=& (\sqrt2)^{2l+3} \sum_{t=0}^l \frac{(2t+2)^2}{2l+2}{2l+2 \choose l-t}\left[\sum_{k=0}^{n-t}\frac{(-1)^k(2n+1-k)!}{k!(n-k-t)! (n-k+t+2)!}\right] \gamma_{2t+2}\\
&=&(\sqrt2)^{2l+3} \sum_{t=0}^l \frac{(2t+2)^2}{2l+2}{2l+2 \choose l-t}G_2(n,t) \gamma_{2t+2},
\Eea
where $$G_2(n,t)=\sum_{k=0}^{n-t}\frac{(-1)^k(2n+1-k)!}{k!(n-k-t)! (n-k+t+2)!}.$$
$G_1(n,t)$ and $G_2(n,t)$ can be written in terms of hypergeometric function as follows:
\Bea
G_1(n,t)&=&\frac{(2n)!}{(n-t)!(n+t+1)!}\  {}_2{F}_1\left( \stackrel{-(n-t),-(n+t+1)}{-2n};\ 1\right)\\
G_2(n,t)&=&\frac{(2n+1)!}{(n-t)!(n+t+2)!}\  {}_2{F}_1\left( \stackrel{-(n-t),-(n+t+2)}{-2n-1};\ 1\right)\\
\Eea
where ${}_2F_1$ is a hypergeometric function.  By the Chu-Vandermonde identity (see \cite{andrews1999special}), we have
\Bea
 {}_2{F}_1\left( \stackrel{-(n-t),-(n+t+1)}{-2n};\ 1\right)&=&\frac{(-n+t+1)_{n-t}}{(-2n)_{n-t}},\\
 {}_2{F}_1\left( \stackrel{-(n-t),-(n+t+2)}{-2n-1};\ 1\right)&=& \frac{(-n+t+1)_{n-t}}{(-2n-1)_{n-t}},
 \Eea
 where $(a)_n=a(a+1)\cdots(a+n-1)$. Since
 $$(-n+t+1)_{n-t}=\left\{\begin{array}{ccc}
 0&\text{if}&t=0,1,\cdots,n-1\\
 1&\text{if}&t=n
 \end{array}
 \right.
 $$
 we have  $G_1(n,t)=0$, $G_2(n,t)=0$ for $t=0,1,\ldots,n-1$ and $G_1(n,n)=1/(2n+1)$, $G_2(n,n)=1/(2n+2)$. Therefore, $\langle x^{2l}, U_{2n}\rangle =0$ for $0\leq l\leq n-1$ and
$$\langle x^{2n},U_{2n}\rangle=(\sqrt 2)^{2n+2}\gamma_{2n+1}.$$
Similarly, $\langle x^{2l+1}, U_{2n+1}\rangle =0$ for $0\leq l\leq n-1$ and
$$\langle x^{2n+1},U_{2n+1}\rangle=(\sqrt 2)^{2n+3}\gamma_{2n+2}.$$
Therefore
$$\langle U_{2n},U_{2n}\rangle=2 \gamma_{2n+1} \ \mbox{and}\ \langle U_{2n+1},U_{2n+1} \rangle=2 \gamma_{2n+2}.$$
This completes the proof of the lemma.\end{proof}
Now we  complete the proof of  \eqref{eqn: limit of ET_n}. For $f,g \in C_b(\mathbb R)$, if
$$f_k=\frac{1}{4\pi}\int_{-2\sqrt2}^{2\sqrt2}f(x)U_k(x)\sqrt{8-x^2}dx,\ \ g_k=\frac{1}{4\pi}\int_{-2\sqrt2}^{2\sqrt2}g(x)U_k(x)\sqrt{8-x^2}dx,$$
then
\bea\label{eqn: f,g in terms of f_k,g_k}
\langle f,g \rangle &=& \sum_{k=0}^{\infty}f_k g_k 2  \gamma_{k+1}\\
&=& \frac{1}{8\pi^3}\int_{-2\sqrt 2}^{2\sqrt 2} \int_{-2\sqrt 2}^{2\sqrt 2} f(x)g(y)\sqrt{8-x^2}\sqrt{8-y^2}\left[\pi\sum_{k=0}^{\infty}U_k(x)U_k(y)\gamma_{k+1}\right]dxdy\nonumber \\
&=&\frac{1}{8\pi^3}\int_{-2\sqrt 2}^{2\sqrt 2} \int_{-2\sqrt 2}^{2\sqrt 2} f(x)g(y)\sqrt{8-x^2}\sqrt{8-y^2}F(x,y)dxdy\nonumber
\eea
where
\bea\label{eqn: tildeF(x,y)}
F(x,y)=\pi\sum_{k=0}^{\infty}U_k(x)U_k(y)\gamma_{k+1}=2 \int_{-\infty}^{\infty}\frac{z-z^3}{2(1-z^2)^2+z^2(x^2+y^2)-z(1+z^2)xy}ds
\eea
with $z=\frac{\sin s}{s}$. Now \eqref{eqn: f,g in terms of f_k,g_k} holds due to \eqref{eqn: inner product U_n,U_m} and orthogonality of Chebyshev polynomial  with respect to the Wigner semicircular law, that is,
$$\int_{-2\sqrt2}^{2\sqrt 2}U_n(x)U_m(x)\frac{1}{4\pi}\sqrt{8-x^2}dx=\delta_{mn}.$$
And \eqref{eqn: tildeF(x,y)} is a straightforward consequence of the Fourier analysis using the following fact
$$U_n(x)=\frac{\sin[(n+1)\theta]}{\sin \theta},\ x=2\sqrt 2 \cos \theta.$$
This completes the proof of Proof of \eqref{eqn: limit of ET_n}.
\end{proof}

\section*{Recent development} Recently, after submission of our paper, M. Shcherbina \cite{shcherbina2015fluctuations} improved our result by removing the restriction $b_{n}>>\sqrt{n}$ and proved it for all $b_{n}$ which satisfies $b_{n}\to \infty$ and $\f{b_{n}}{n}\to 0$ as $n\to\infty$.
\section*{Acknowledgment} We thank the referee for her/his constructive comments which have led to a significant improvement in presentation.

\section{Some MATLAB simulation results}\label{section: Simulation}

Here is what we found in MATLAB simulations.

\begin{figure}[h!]
\begin{subfigure}[b]{0.5\textwidth}
\includegraphics[scale=0.3]{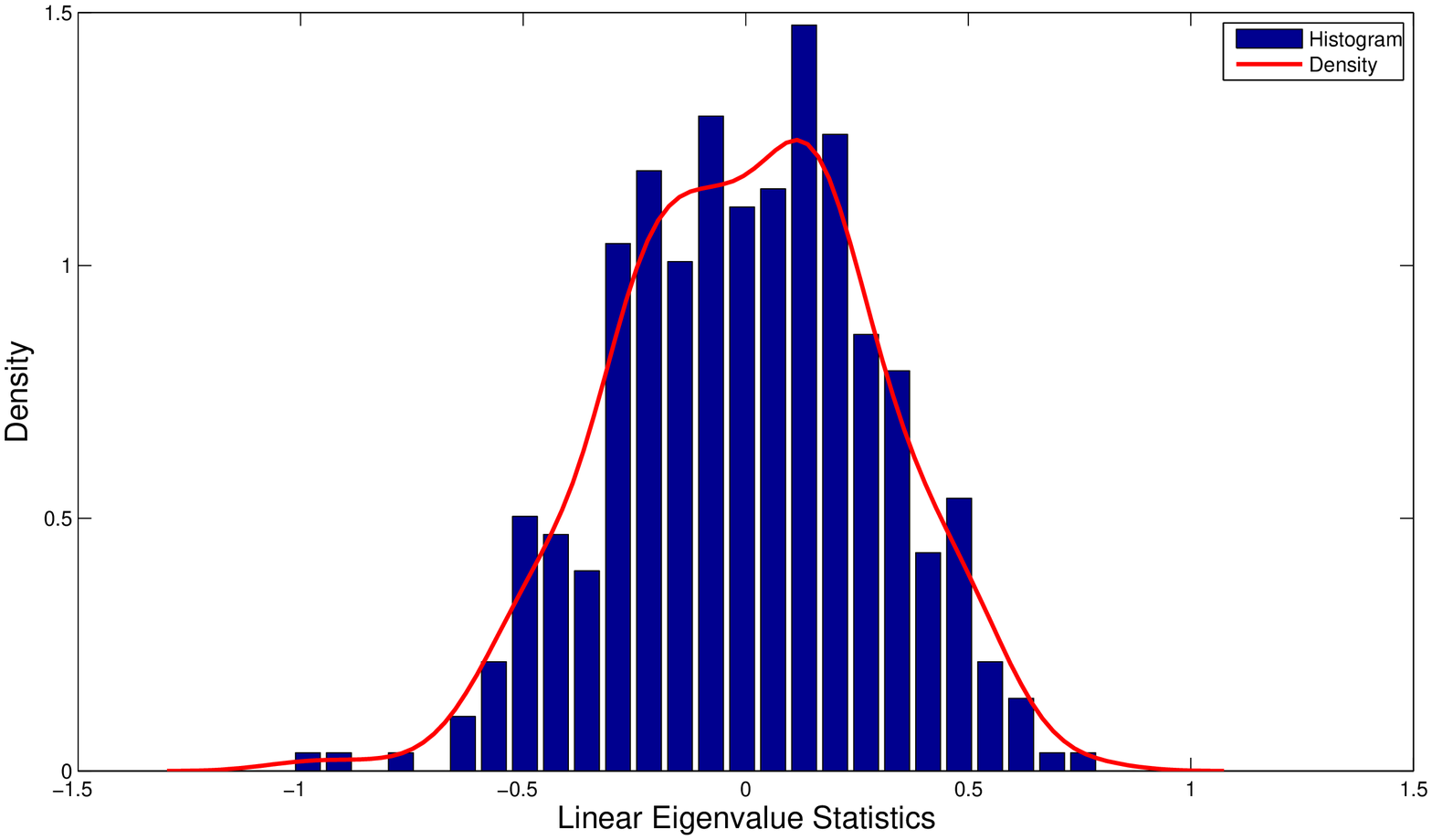}
\caption{$n=2000$, $b_{n}=n^{0.2}$. Fourth moment/(variance)$^2$=2.92}
\end{subfigure}
\hfill
\begin{subfigure}[b]{0.5\textwidth}
\includegraphics[scale=0.3]{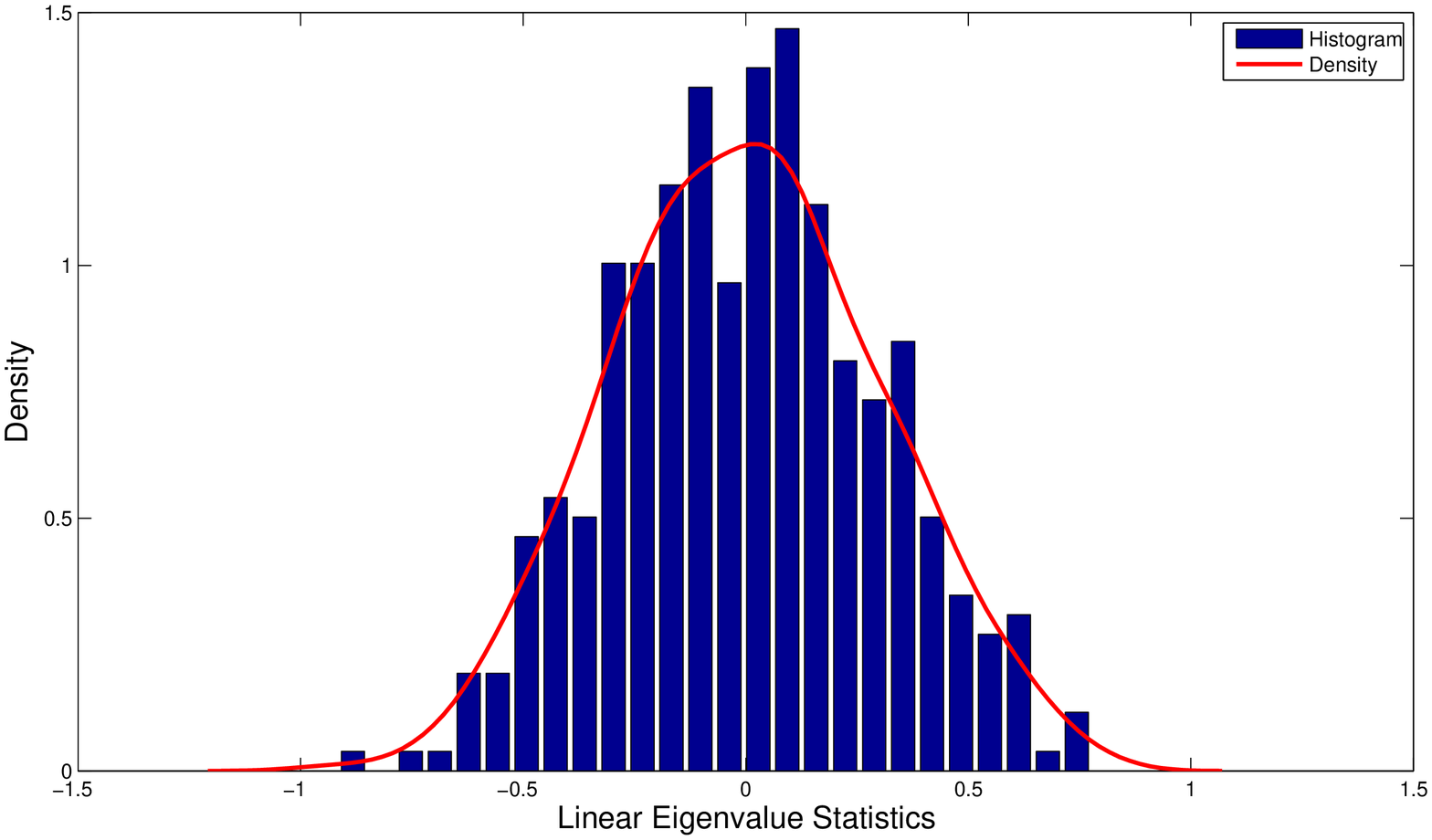}
\caption{$n=2000$, $b_{n}=n^{0.4}$. Fourth moment/(variance)$^2$=2.71}
\end{subfigure}
\end{figure}
\begin{figure}[h!]
\begin{subfigure}[b]{0.5\textwidth}
\includegraphics[scale=0.3]{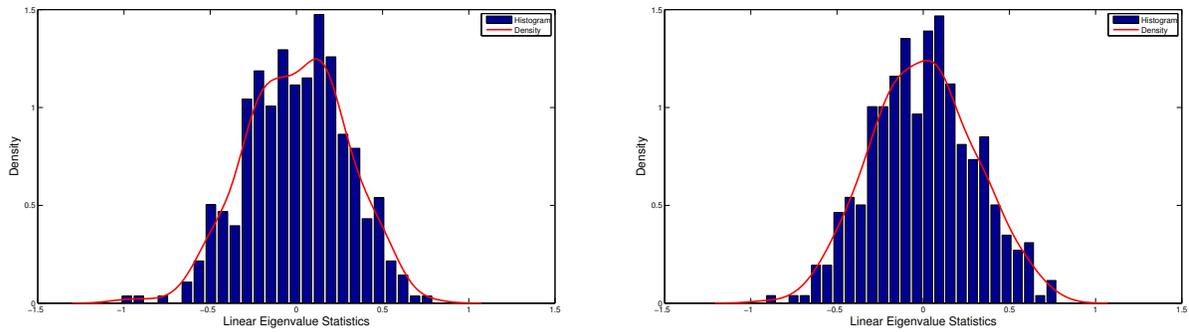}
\caption{$n=2000$, $b_{n}=n^{0.6}$. Fourth moment/(variance)$^2$=2.57}
\end{subfigure}
\hfill
\begin{subfigure}[b]{0.5\textwidth}
\includegraphics[scale=0.3]{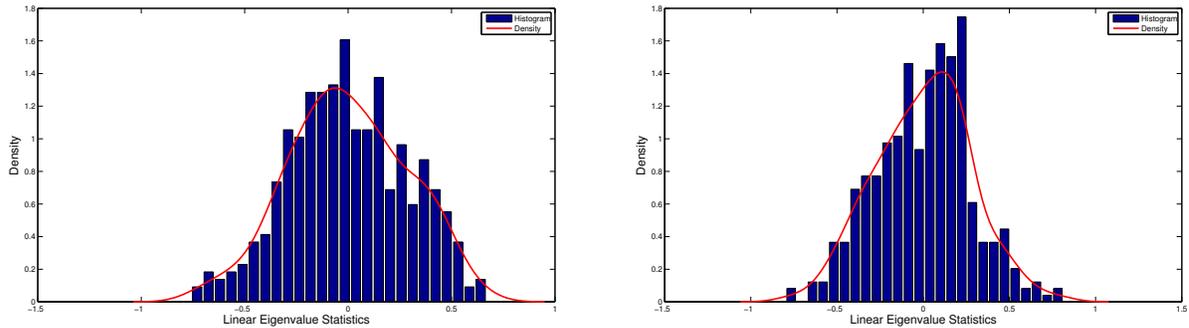}
\caption{$n=2000$, $b_{n}=n^{0.8}$. Fourth moment/(variance)$^2$=2.91}
\end{subfigure}
\caption{The eigenvalue statistics was sampled $400$ times. The test function was $\phi(x)=\sqrt{16-x^{2}}.$}
\end{figure}

In the following example we had taken a different test function.
 
\begin{figure}[h!]
\begin{subfigure}[b]{0.4\textwidth}
\includegraphics[scale=0.3]{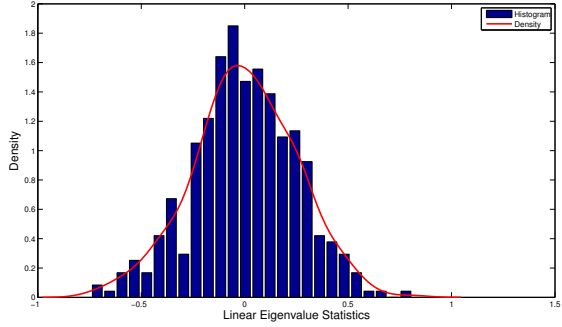}
\caption{$n=2000$, $b_{n}=n^{0.2}$. Fourth moment/(variance)$^2$=3.08}
\end{subfigure}
\hfill
\begin{subfigure}[b]{0.4\textwidth}
\includegraphics[scale=0.3]{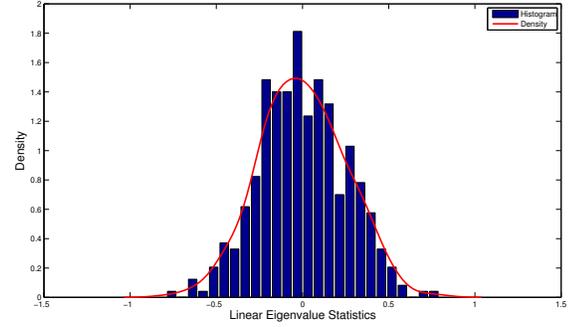}
\caption{$n=2000$, $b_{n}=n^{0.4}$. Fourth moment/(variance)$^2$=2.94}
\end{subfigure}
\begin{subfigure}[b]{0.4\textwidth}
\includegraphics[scale=0.3]{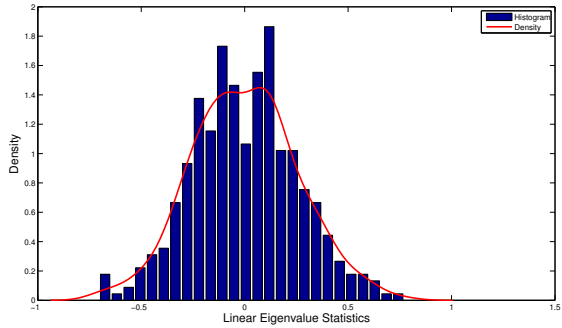}
\caption{$n=2000$, $b_{n}=n^{0.6}$. Fourth moment/(variance)$^2$=3.00}
\end{subfigure}
\hfill
\begin{subfigure}[b]{0.4\textwidth}
\includegraphics[scale=0.3]{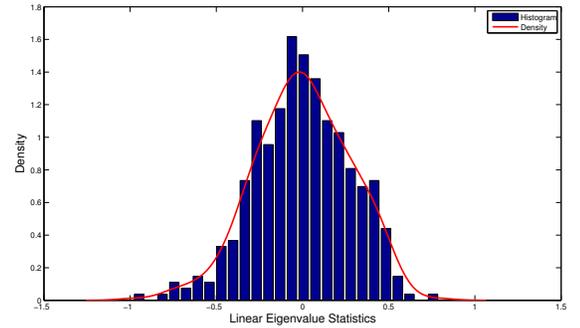}
\caption{$n=2000$, $b_{n}=n^{0.8}$. Fourth moment/(variance)$^2$=3.08}
\end{subfigure}
\caption{The eigenvalue statistics was sampled $400$ times. The test function was $\phi(x)=e^{-x^{2}}.$}
\end{figure}

\newpage
\bibliographystyle{abbrv}
\bibliography{CLT for Band.bbl}
\end{document}